\definecolor{refkey}{rgb}{1,0,0}
\definecolor{labelkey}{rgb}{1,0,0}
  \mathchardef\ordinarycolon\mathcode`\:
\theoremstyle{plain}
\newtheorem{thm}{Theorem}[section]
\newtheorem{lem}[thm]{Lemma}
\newtheorem{prop}[thm]{Proposition}
\newtheorem{cor}[thm]{Corollary}
\begin{document}

\title{\Large{Stabilized rapid oscillations in a delay equation:\\
 Feedback control by a small resonant delay}}
\subtitle{	
	\vspace{4ex}
	\textit{{\normalsize -- Dedicated to Jürgen Scheurle 
	in gratitude and friendship --} }\vspace{4ex}
	}

\author{
Bernold Fiedler*\\
Isabelle Schneider*\\
\vspace{2cm}}
\date{version of \today}
\maketitle
\thispagestyle{empty}

\vfill

*\\
Institut für Mathematik\\
Freie Universität Berlin\\
Arnimallee 3/7\\ 
14195 Berlin, Germany\\

%%%%%%%%%%%%%%%%%%%%%%%%%%%%%%%%%%%%%%%%%%%%%%%%%%%%%%%%%%%

\newpage
\pagestyle{plain}
\pagenumbering{roman}
\setcounter{page}{1}

\begin{abstract}
We study scalar delay equations 
$$\dot{x} (t) = \lambda f(x(t-1)) + b^{-1} (x(t) + x(t -p/2))$$
with odd nonlinearity $f$, real nonzero parameters $\lambda, \, b$, and two positive time delays $1,\ p/2$.
We assume supercritical Hopf~bifurcation from $x \equiv 0$ in the well-understood single-delay case $b = \infty$.
Normalizing $f' (0)=1$, branches of constant minimal period $p_k = 2\pi/\omega_k$ are known to bifurcate from eigenvalues $i\omega_k = i(k+\tfrac{1}{2})\pi$ at $\lambda_k = (-1)^{k+1}\omega_k$, for any nonnegative integer $k$.
The unstable dimension of these rapidly oscillating periodic solutions is $k$, at the local branch $k$.
We obtain stabilization of such branches, for arbitrarily large unstable dimension $k$, and for, necessarily, delicately narrow regions $\mathcal{P}$ of scalar control amplitudes $b < 0$.

For $p$:= $p_k$ the branch $k$ of constant period $p_k$ persists as a solution, for any $b\neq 0$.
Indeed the delayed feedback term controlled by $b$ vanishes on branch $k$:
the feedback control is noninvasive there.
Following an idea of \cite{pyr92}, we seek parameter regions $\mathcal{P} = (\underline{b}_k,\overline{b}_k)$ of controls $b \neq 0$ such that the branch $k$ becomes stable, locally at Hopf~bifurcation.
We determine rigorous expansions for $\mathcal{P}$ in the limit of large $k$.
Our analysis is based on a \emph{2-scale covering lift} for the slow and rapid frequencies involved.

These results complement earlier results by \cite{fiol16} which required control terms 
$$b^{-1} (x(t-\vartheta) + x(t-\vartheta -p/2))$$
with a third delay $\vartheta$ near 1.
\end{abstract}

\tableofcontents
  
%%%%%%%%%%%%%%%%%%%%%%%%%%%%%%%%%%%%%%%%%%%%%%%%%%%%%%%%%%%

\newpage
\pagenumbering{arabic}
\setcounter{page}{1}

\section{Introduction and main result}
\label{sec:1}

\numberwithin{equation}{section}
\numberwithin{figure}{section}

In an ODE setting, \emph{delayed feedback control} is frequently studied for systems like
	\begin{equation}
	\dot{\mathbf{x}}(t) =
	\mathbf{F}(\mathbf{x} (t)) +
	\beta (\mathbf{x} (t) - \mathbf{x}(t-\tau))
	\label{eq:1.1}
	\end{equation}
with $\mathbf{x} \in \mathbb{R}^N$, smooth nonlinearities $\mathbf{F}$, and suitable $N \times N$ matrices $\beta$ mediating the feedback.
If the uncontrolled system $\beta = 0$ possesses a periodic orbit $\mathbf{x}_*(t)$ of (not necessarily minimal) period $p > 0$, then $\mathbf{x}_*(t)$ remains a solution of \eqref{eq:1.1} for time delays $\tau = p$ and any control matrix $\beta$.
In this sense, the delayed feedback control is noninvasive on $\mathbf{x}_*(t)$.
The linearized and nonlinear stability or instability of $\mathbf{x}_*(t)$, however, may well be affected by the control term $\beta$.

The above idea was first proposed by Pyragas, see~\cite{pyr92}.
It has gained significant popularity in the applied literature since then, with currently around 3000~publications listed.
See~\cite{fieetal08} and~\cite{pyr12} for more recent surveys.
In fact, no previous knowledge of the nonlinearity $\mathbf{F}$ is required to attempt this procedure, or any of its many variants.

One fundamental disadvantage of the Pyragas~method~\eqref{eq:1.1}, from a theoretical perspective, is the replacement of the ODE $\dot{\mathbf{x}} = \mathbf{F}(\mathbf{x})$ in finite-dimensional phase space $X = \mathbb{R}^N$ by the infinite-dimensional dynamical system~\eqref{eq:1.1} in a history phase space like 
$\mathbf{x}(t+ \cdot ) \in X = C^0 ([-\tau,\, 0],\, \mathbb{R}^N)$.
On the other hand, the very existence of a periodic solution $\mathbf{x}(t)$, for vanishing control $\beta = 0$, requires $N \geq 2$.

In \cite{fiol16} we therefore started to explore the Pyragas~method of delayed feedback control, in a slightly modified form, for the very simplistic scalar case
	\begin{equation}
	\dot{x}(t) =
	\lambda f (x(t-1)) +
	b^{-1} (x(t-\vartheta) +
	x(t-\vartheta - p/2))\,.
	\label{eq:1.2}
	\end{equation}
We consider nonzero real parameters $\lambda,\, b$ and positive delays $\vartheta,\ p/2$. The case $b = 0$ will only appear as a formal limit $\beta = \pm \infty$ of infinite feedback amplitudes. The identical cases $b = \pm \infty$ account for vanishing feedback $\beta=0$ and correspond to the scalar pure delay equation
	\begin{equation}
	\dot{x}(t) =
	\lambda f (x(t-1))
	\label{eq:1.3}
	\end{equation}
with $|\lambda |$ normalizing the remaining delay $\tau$ to unity.
See \cite{wri55} for an early analysis of a specific equation of this type, equivalent to the delayed logistic equation.

Throughout the paper we assume $f \in C^3$ to be odd, with normalized first derivative at $f(0) = 0$:
	\begin{equation}
	f(-x) =	-f (x), \quad
	f'(0) = 1, \quad
	f'''(0) < 0\,.
	\label{eq:1.4}
	\end{equation}
	
The characteristic equation for complex eigenvalues $\mu$ of the linearization of~\eqref{eq:1.2} at parameter $\lambda$ and the trivial equilibrium $x \equiv 0$ then reads
	\begin{equation}
	\mu =
	\lambda e^{-\mu} + b^{-1} 
	(e^{-\vartheta \mu} + e^{- (\vartheta+ p/2) \mu})\,.
	\label{eq:1.2ch}
	\end{equation}
See~\cite{kapyor74} for an analysis of odd periodic solutions $x_k(t)$ of the pure delay equation \eqref{eq:1.3} with constant minimal period	
	\begin{equation}
	p_k := 2 \pi/ \omega_k, \quad
	\omega_k := (k+\tfrac{1}{2}) \pi\,.
	\label{eq:1.5}
	\end{equation}
The periodic solutions originate by Hopf~bifurcation from imaginary eigenvalues $\pm\, i \omega_k$ at $x = 0$ for parameters	
	\begin{equation}
	\lambda = \lambda_k :=
	(-1)^{k+1} \omega_k\,.
	\label{eq:1.6}
	\end{equation}
Here $k \in \mathbb{N}_0$ is any nonnegative integer.
For $k >0$, these periodic solutions are called \emph{rapidly oscillating} because their minimal period $p_k$ is at most $4/3$.
\emph{Slowly oscillating} periodic solutions, in contrast, have minimal periods exceeding 2.
For example, $p_0=4$.
See \cite{wal14} for a survey of related results.
In particular see~\cite{wal83, dor89} for secondary bifurcations from these primary branches.

The case $\dot{x}(t) = g(x(t),\, x(t-1))$ of the general scalar delay equation with a single time lag has attracted considerable attention; see for example~\cite{beco63, hale77, halevl93, dieetal95, wu96, kolmysh99, nu02} and the many references there.
After early observations by Myshkis \cite{mysh49}, Mallet-Paret \cite{mp88} has discovered a discrete Lyapunov~functional for $g$ with monotone delayed feedback.
The global consequences of this additional structure are enormous; see \cite{fiemp89, kri08, lop17, mpse96a, mpse96b, wal95}.
For example, all rapidly oscillating periodic solutions are known to be unstable.
More recent developments study this scalar equation with state dependent delays, where the time delay $1$ is not constant but depends on the history $x(t+ \cdot)$ of the solution itself; see for example~\cite{haretal06,mpnu92, mpnu96, mpnu03, mpnu11, nu02}. An excellent survey article on the above developments for general scalar delay equations with a single delay is \cite{wal14}.

\begin{figure}[t!]
\centering \includegraphics[width=\textwidth]{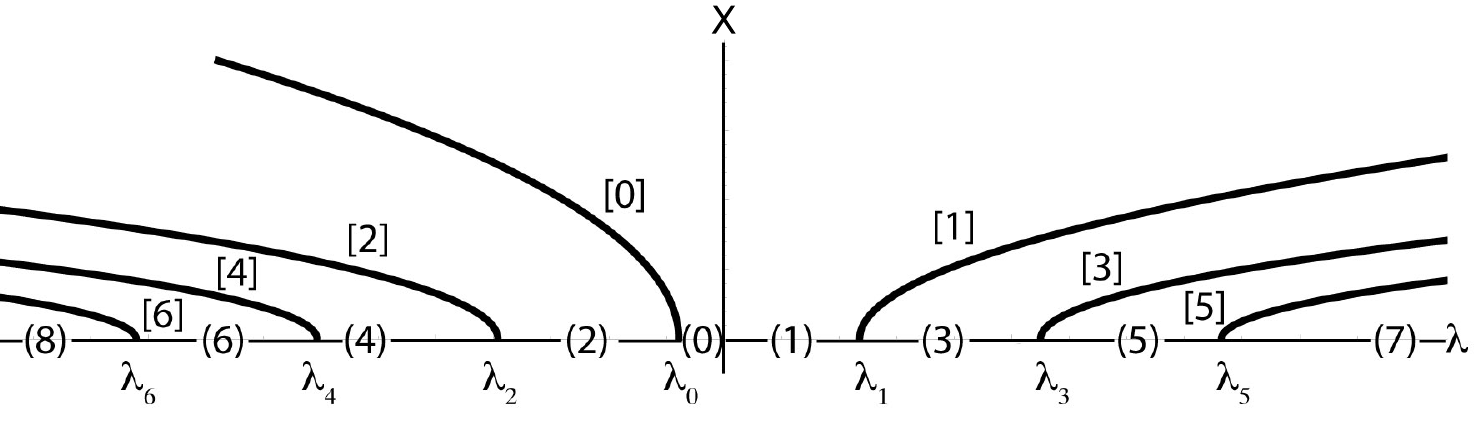}
\caption{\emph{
Supercritical Hopf~bifurcations of \eqref{eq:1.3} at $\lambda = \lambda_k$.
Note the strict unstable dimensions $E(\lambda_k) =k$ of the trivial equilibrium, in parentheses $(k)$, and the inherited unstable dimensions $[k]$, in brackets, of the local branches of bifurcating periodic orbits with constant minimal period $p_k = 4/(2k+1)$.
All branches consist of unstable rapidly oscillating periodic solutions, except for the stable slowly oscillating branch $k=0$.
See \cite{fiol16}.
}}
\label{fig:1.1}
\end{figure}

But let us return to the simple setting \eqref{eq:1.3} -- \eqref{eq:1.5} of a pure delay equation.
Pioneering analysis by~\cite{kapyor74} reduces the quest for periodic solutions near all Hopf bifurcations \eqref{eq:1.6} to a planar Hamiltonian ODE system.
This is due to an \emph{odd-symmetry}
	\begin{equation}
	x_k(t+p_k/2) =
	-x_k(t)
	\label{eq:1.7}
	\end{equation}
at half minimal period $p_k$, for all real $t$.
See also \cite{fiol16} for complete details, and \cite{yuguo14} for a survey on the Kaplan-Yorke idea.
Remarkably, global solution branches of constant minimal period $p_k$ emanate from each $\lambda = \lambda_k$ towards $\lambda$ of larger absolute value, in the soft spring case of strictly decreasing secant slopes $x \mapsto f(x)/x$, for $x >0$.
In particular all Hopf~bifurcations are locally nondegenerate and quadratically supercritical under the sign assumption $f'''(0) < 0$ of~\eqref{eq:1.4}. See fig.~\ref{fig:1.1} for a bifurcation diagram. 

\begin{figure}[t!]
\centering \includegraphics[width=\textwidth]{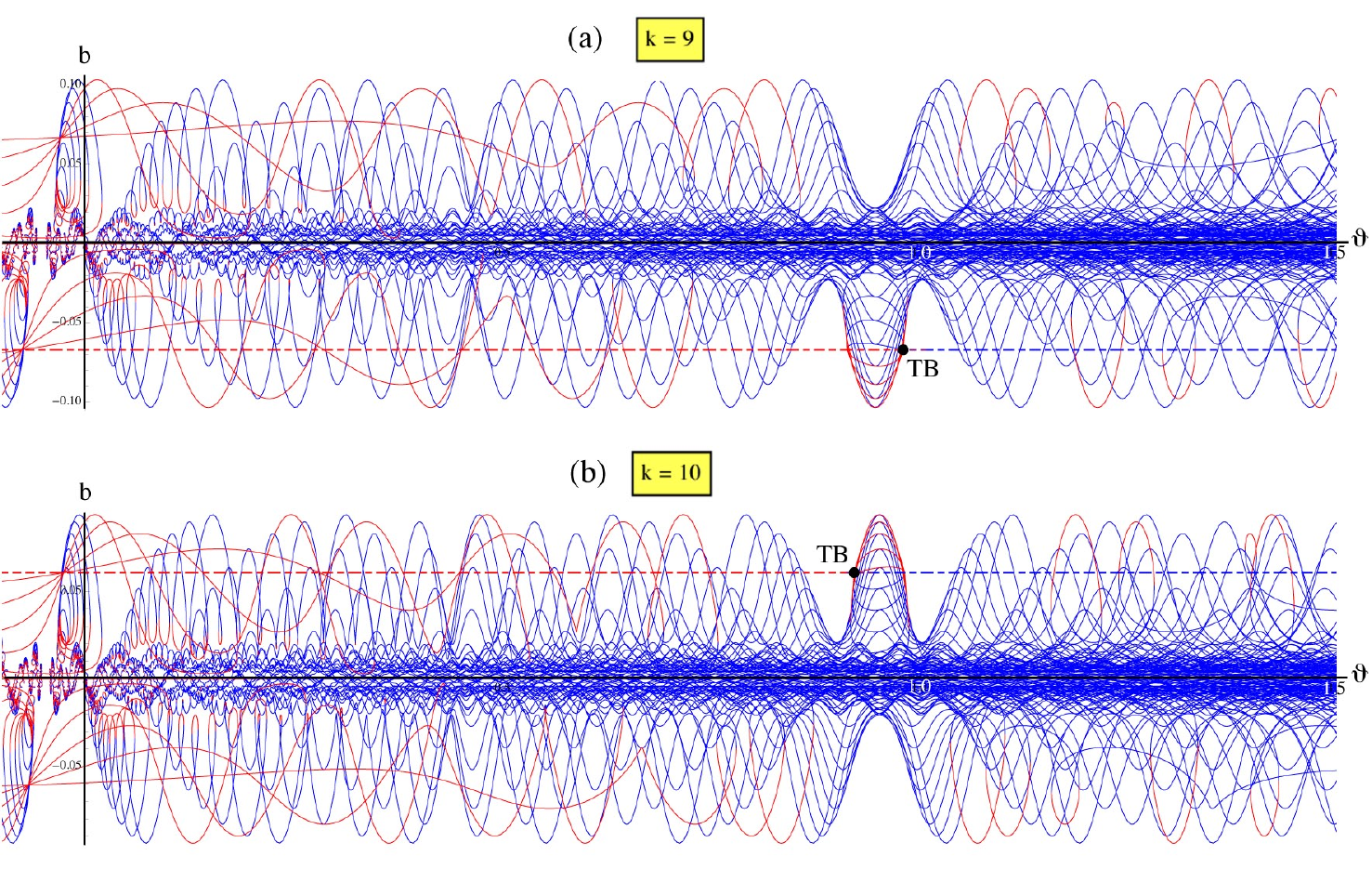}
\caption{\emph{
Additional Hopf~curves (colored solid), zero eigenvalue (red dashed), and Takens-Bogdanov~bifurcations (TB, black) at fixed $\lambda = \lambda_k$, for odd $k = 9$, (a) top, and even $k = 10$, (b) bottom. 
The Hopf curves are generated by the control parameters $\vartheta$ and $b$ of the delayed feedback terms in \eqref{eq:1.2}.
The more stable side is found towards smaller $|b|$, at red Hopf branches, and towards larger $|b|$, at blue branches. 
The same statement holds true at the zero eigenvalue; see the red dashed line. 
See \cite{fiol16} for further details.
}}
\label{fig:1.2}
\end{figure}

At supercritical Hopf~bifurcation it is easy to determine the unstable dimension $E$, i.e. the total algebraic multiplicity of Floquet~multipliers outside the complex unit circle, for the emanating local branch of periodic orbits.
It coincides with the total algebraic multiplicity
	\begin{equation}
	E = E (\lambda_k) = k
	\label{eq:1.8}
	\end{equation}
of the eigenvalues $\mu$ with strictly positive real part for the characteristic equation~\eqref{eq:1.2ch} at the Hopf~point $\lambda = \lambda_k\,$.
See for example~\cite{dieetal95, hale77, halevl93}.
Henceforth we skip the trivial case $k=0$, which leads to the bifurcation of stable, slowly oscillating solutions. Let $k\in \mathbb{N}$ be any strictly positive integer.

The local (and global) Hopf branches $(\lambda, x_k(t))$ which bifurcate at $\lambda = \lambda_k, \ x=0$ inherit constant period $p=p_k$ and odd-symmetry \eqref{eq:1.7}. 
Therefore our modified Pyragas control scheme~\eqref{eq:1.2} with $p$:= $p_k$ is noninvasive on the Hopf~branches with this symmetry.
For earlier applications of Pyragas control at half minimal periods in the presence of some involutive symmetry, although not in our present delay context, we refer to \cite{nakue98, fieetal10}.
It is our main objective to stabilize all bifurcating periodic orbits, for arbitrarily large unstable dimensions $k\in \mathbb{N}$, by suitable Pyragas controls \eqref{eq:1.2}.
For odd $k$, in particular, this again refutes the purported ``odd number limitation'' of Pyragas control \cite{fieetal07, juetal07, nak97, nakue98}.

We define a \emph{Pyragas~region} $\mathcal{P}$ to be a connected component of real control parameters $b \neq 0$ and $\vartheta \geq 0$ for which the periodic solutions $x_k(t)$ emanating by local Hopf~bifurcation from $\lambda=\lambda_k,\, x \equiv 0$ become linearly asymptotically stable for sufficiently small amplitudes.
Therefore, the boundaries of Pyragas regions are either certain curves where zero eigenvalues $\mu = 0$ occur, or else are Hopf curves characterized by purely imaginary eigenvalues $\mu =i\tilde{\omega}$ of the characteristic equation \eqref{eq:1.2ch}.

With this definition we can now formulate the main result of the previous paper \cite{fiol16}.
See fig.~\ref{fig:1.2} for an illustration of Hopf~curves in the cases $k = 9$ and $k =10$.

\begin{figure}%[t!]
\centering \includegraphics[width=0.9\textwidth]{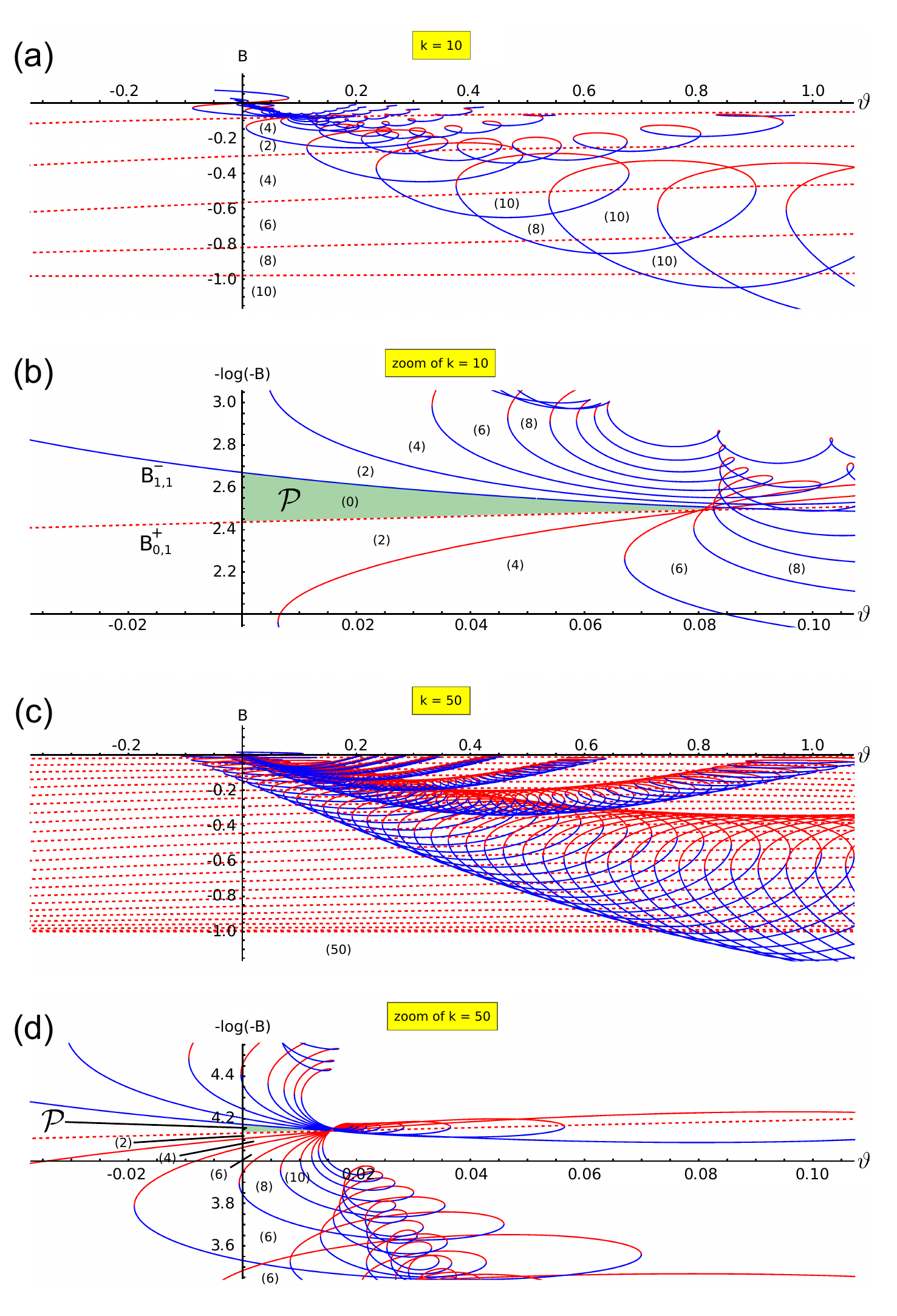}
\caption{\emph{
Control induced Hopf curves in parameters $(\vartheta,b)$, as in fig \ref{fig:1.2}, near $\vartheta = 0$. (a) $k=10$, (b) zoom into $k=10$, (c) $k=50$, (d) zoom into $k=50$.
Vertical coordinates are $B$, in (a), (c), and $-\log(-B)$, for the zooms (b),(d), with scaled $B=\frac{1}{2} b \omega_k$. Pyragas regions $\mathcal{P}$ are indicated in green. Hopf curves $\mu=i\tilde{\omega}$ with Hopf frequencies $0 < \tilde{\omega} < \omega_k$ are dashed (red), and Hopf curves with $\tilde{\omega} > \omega_k$ are solid (red,blue).
For color coding see fig. \ref{fig:1.2}. Unstable dimensions $E(b,\vartheta)$ of $x\equiv 0$, and of bifurcating periodic orbits, are indicated in parentheses. 
}}
\label{fig:1.3}
\end{figure}

\begin{thm}
\cite{fiol16}
Consider the system~\eqref{eq:1.2} of delayed feedback control for the scalar pure delay equation~\eqref{eq:1.3}.
Let assumptions~\eqref{eq:1.4} of oddness and normalization hold for the soft spring nonlinearity $f \in C^3$.
Then the following assertions hold for large enough $k \geq k_0$.

There exist Pyragas~regions $\mathcal{P} = \mathcal{P}_k^+ \, \cup \, \mathcal{P}_k^-$ composed of two disjoint open sets $\mathcal{P}_k^\pm \neq \varnothing$.
Each region $\mathcal{P}_k^\iota,\, \iota = \pm$, is bounded by the horizontal \emph{zero line}
	\begin{equation}
	b =b_k := -2/\lambda_k
	=(-1)^k \cdot 2/\omega_k
	\label{eq:1.9}
	\end{equation}	
and three other analytic curves $\gamma_k^0$ and $\gamma_{k,\,\pm}^\iota$, all mutually transverse.
The zero line \eqref{eq:1.9} indicates a zero eigenvalue $\mu$ of the characteristic equation \eqref{eq:1.2ch} of~\eqref{eq:1.2} at $x \equiv 0$.
The other curves indicate additional purely imaginary eigenvalues $\mu$.

Define $\varepsilon$:= $1/\omega_k$.
Then an approximation of the Pyragas~regions $(\vartheta,\,b) \in \mathcal{P}_k^\iota,\, \iota = \pm$, up to error terms of order $\varepsilon^3$, is given by two parallelograms.
One exact horizontal boundary is $b=b_k := (-1)^k\, 2 \varepsilon$; see~\eqref{eq:1.9}.
The other horizontal boundary $\gamma_k^0$ is approximated by
	\begin{equation}
	b = (-1)^k\, 2 \varepsilon +
	b_k^\iota\, \varepsilon^2 + \ldots\,.
	\label{eq:1.10}
	\end{equation}
The sides $\gamma_{k,\,\pm}^\iota$ are given, up to order $\varepsilon^3$, by the parallel slanted lines through the four points at $b=b_k$,
	\begin{equation}
	\vartheta_{k,\,\pm}^\iota = 1 -
	(\tfrac{\pi}{2} - \iota\,q) \varepsilon
	\pm \Theta_{k+2}\, \varepsilon^2 + \ldots\,,	
	\label{eq:1.11}
	\end{equation}
with slopes $\sigma_k^\iota$.
The offsets $q,\,\Theta_{k+2},\, b_k^\iota$ and the slopes $\sigma_k^\iota$ of the Pyragas parallelograms are given by
	\begin{equation}
	\begin{aligned}
	q\  &= \ 
	\arccos \, (2/\pi)
	&&= \  0.88 \ldots\,,\\
	\Theta_{k+2}\  &= \ 
	\pi (\sqrt{(\tfrac{\pi}{2})^2-1} - 2q)
	&&=  \ - 1.73 \ldots\,,\\
	b_k^\iota \  &= \ 
	(\pi+2 \Phi_k^\iota )\, 
	\cos\, \Phi_k^\iota
	&&= \  1.49 \ldots + (-1)^k \iota\cdot 1.02 \ldots\,,\\
	\sigma_k^\iota \  &= \ 
	2(-1)^{k+1} \iota \sqrt{(\tfrac{\pi}{2})^2-1}
	&&= \  (-1)^{k+1} \iota \cdot 2.42 \ldots
	\end{aligned}
	\label{eq:1.13a}
	\end{equation}
Here we have used the abbreviations
	\begin{equation}
	\Phi_k^\iota \ = 
	\  (-1)^k \iota \, \arcsin \, q
	\label{eq:1.13b}
	\end{equation}
In particular the areas $| \mathcal{P}_k^\pm |$ of the Pyragas~regions are of very small order $\varepsilon^4$.
The relative areas are approximately reciprocal,
	\begin{equation}
	\lim_{k \mapsto \infty} |\mathcal{P}_k^+|
	/ |\mathcal{P}_k^-| =
	\lim_{k \mapsto \infty} b_k^+ / b_k^- = 
	\left\lbrace 
	\begin{aligned}%{array}{lll}
	&5.37\ldots 
	\qquad &\mathrm{for}\ &\mathrm{even}\ k \,,\\
	&0.19 \ldots 
	\qquad &\mathrm{for}\ &\mathrm{odd}\ k\,.
	\end{aligned}%{array}
	\right.
	\label{eq:1.14}
	\end{equation}
\label{thm:1.1}
\end{thm}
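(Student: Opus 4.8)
The plan is to reduce the stability of the bifurcating branch to a spectral problem for the trivial equilibrium, and then to solve the resulting transcendental characteristic equation by a two-scale expansion in $\varepsilon := 1/\omega_k$. First I would exploit noninvasiveness: since $x_k$ has period $p_k$ and the half-period antisymmetry \eqref{eq:1.7}, the control term vanishes identically along $x_k$ for $p=p_k$, so the branch $(\lambda,x_k)$ persists unchanged for every $(\vartheta,b)$ and in particular still bifurcates supercritically at $\lambda_k$ under $f'''(0)<0$. At the linearization the same cancellation reads $1+e^{-i\omega_k p_k/2}=1+e^{-i\pi}=0$, so $\mu=i\omega_k$ stays a root of \eqref{eq:1.2ch} for all controls and the Hopf point is pinned at $\lambda_k$. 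Floquet theory for Hopf bifurcation in delay equations then yields the reduction I want: the control changes only the Floquet spectrum, the critical pair contributes the trivial exponent together with one stable exponent (by supercriticality), and all remaining exponents limit onto the other roots of \eqref{eq:1.2ch} at $\lambda_k$. Hence $x_k$ is linearly stable near bifurcation exactly when every root of \eqref{eq:1.2ch} at $\lambda=\lambda_k$ other than $\pm i\omega_k$ lies in $\{\mathrm{Re}\,\mu<0\}$, and the Pyragas region is this open control set, bounded by loci where a further root reaches the imaginary axis.

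Next I would locate those loci. A zero root $\mu=0$ forces $0=\lambda_k+2b^{-1}$, giving the exact \emph{zero line} $b=b_k=-2/\lambda_k=(-1)^k 2\varepsilon$ of \eqref{eq:1.9}. For an imaginary root $\mu=i\tilde{\omega}$, $\tilde{\omega}\neq\omega_k$, I would use the two exact identities $1+e^{-i\tilde{\omega}p_k/2}=2\cos(\tfrac{\pi}{2}\varepsilon\tilde{\omega})e^{-i\pi\varepsilon\tilde{\omega}/2}$ and $\lambda_k e^{-i\tilde{\omega}}=i\omega_k e^{-is}$, with $s:=\tilde{\omega}-\omega_k$, and split \eqref{eq:1.2ch} into real and imaginary parts to obtain two real equations in $(s,\vartheta,b)$. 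The resonant factor $\cos(\tfrac{\pi}{2}\varepsilon\tilde{\omega})=-\sin(\tfrac{\pi}{2}\varepsilon s)$ vanishes at resonance and must balance the diverging feedback $b^{-1}=O(1/\varepsilon)$; rescaling $B:=\tfrac{1}{2}b\omega_k$ exhibits this balance and separates the rapid frequency $\omega_k$ from the slow deviation $s$. A dominant-balance argument shows that, for controls of the relevant size $B\approx(-1)^k$, the roots able to reach the imaginary axis sit only near even shifts $s\approx m\pi$ (odd shifts producing an incurable $O(1/\varepsilon)$ imbalance in the imaginary part), so the slanted sides come from the nearest even resonances $s\approx\pm2\pi$, at $\tilde{\omega}\approx\omega_{k\pm2}$, in line with the index in $\Theta_{k+2}$.

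At leading order, for these even resonances, the imaginary-part equation collapses to $\sin\phi=2B/\pi=(-1)^k 2/\pi$ for the phase $\phi=\tilde{\omega}(\vartheta+\tfrac{\pi}{2}\varepsilon)$. Writing $\vartheta=1-a\varepsilon+\ldots$ gives $\phi=(k+1+m)\pi-a+O(\varepsilon)$, hence $\sin\phi=(-1)^{k+m}\sin a$, so the condition reduces to $\sin a=2/\pi$, whose two branches $a=\tfrac{\pi}{2}\mp q$ with $q=\arccos(2/\pi)$ produce the two regions $\iota=\pm$ and the $O(\varepsilon)$ offset of \eqref{eq:1.11}. Carrying the expansion to order $\varepsilon^2$, the splitting between the $m=+2$ and $m=-2$ branches yields the intercept separation $\pm\Theta_{k+2}\varepsilon^2$ and the slopes $\sigma_k^\iota$, while the companion crossing supplies the second horizontal boundary $\gamma_k^0$ with offset $b_k^\iota\varepsilon^2$; this delivers \eqref{eq:1.10}, \eqref{eq:1.11}, \eqref{eq:1.13a} and \eqref{eq:1.13b}. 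Assembling the zero line, $\gamma_k^0$ and the two sides shows each region is, to order $\varepsilon^3$, a parallelogram of $\vartheta$-width $2\Theta_{k+2}\varepsilon^2$ and $b$-height $|b_k^\iota|\varepsilon^2$, hence of area $O(\varepsilon^4)$; since the $\vartheta$-width is independent of $\iota$, the area ratio reduces to $b_k^+/b_k^-$, which is \eqref{eq:1.14}.

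The main obstacle is the two-scale matched asymptotics itself: the near-cancellation of the resonant cosine against the $O(1/\varepsilon)$ feedback must be tracked consistently through order $\varepsilon^2$ to resolve the $O(\varepsilon^4)$-thin parallelograms and to pin down $\Theta_{k+2}$, $b_k^\iota$ and $\sigma_k^\iota$ with rigorously controlled $O(\varepsilon^3)$ remainders, uniformly for $k\geq k_0$. Just as essential, and easy to underestimate, is the accompanying global spectral count: I must verify that no other characteristic root -- at a far-away frequency, or near an odd shift $s\approx m\pi$ -- strays into the right half-plane anywhere inside the candidate parallelograms, and complete a root-direction (transversality) computation fixing the stable side of each boundary curve, so that the four curves genuinely enclose a region of linear stability rather than a mere locus where the two nearest roots happen to be tame.
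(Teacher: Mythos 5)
Your reduction and your local computations are sound, and they follow the same route as the actual proof: note that the present paper does not prove theorem~\ref{thm:1.1} at all, but defers entirely to \cite{fiol16}, whose method --- exchange of stability at the supercritical Hopf point, so that Pyragas stabilization is decided by the roots of \eqref{eq:1.2ch} at $\lambda=\lambda_k$; the exact zero line \eqref{eq:1.9}; the scaling $B=\tfrac{1}{2}b\,\omega_k$; and $\varepsilon$-expansions at resonance --- is the same machinery the present paper redeploys for theorem~\ref{thm:1.2}. Your leading-order computation checks out: with $s=\tilde{\omega}-\omega_k$ and $s=O(1)$, crossings can only occur near $s\approx 2\pi j$, the phase equation $\sin\phi=2B/\pi$ at the zero-line value $B=(-1)^k$ reduces to $\sin a=2/\pi$, and the two roots $a=\tfrac{\pi}{2}\mp q$, $q=\arccos(2/\pi)$, reproduce exactly the $O(\varepsilon)$ offsets in \eqref{eq:1.11}.

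The genuine gap is the global spectral count, which you flag as an ``obstacle'' but for which you offer no method --- and it is not a technicality; it is the bulk of the proof. Your dominant-balance argument (``only even shifts matter'') is valid only for bounded $s$: it silently assumes $\tilde{\omega}=\omega_k+O(1)$. For frequencies with $\varepsilon\tilde{\omega}=O(1)$ but away from $1$, the resonant factor $\cos(\tfrac{\pi}{2}\varepsilon\tilde{\omega})$ is \emph{not} small, balances the $O(1/\varepsilon)$ feedback in an entirely different way, and generates further families of Hopf curves in the relevant range of $(\vartheta,b)$; these appear among the additional Hopf curves crowding fig.~\ref{fig:1.2}. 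Moreover, stability inside the candidate parallelograms cannot be settled by boundary transversality alone: at vanishing control the equilibrium carries $E=k$ unstable eigenvalues (all with $0\leq\tilde{\omega}<\omega_k$), so one must connect the candidate region to that configuration and account for \emph{every} crossing along the way, verifying that the net count reaches exactly zero inside $\mathcal{P}_k^\pm$ and nowhere else. This is precisely what the 2-scale covering lift and hashing apparatus accomplishes --- trapping of imaginary parts in strips (cf.\ lemma~\ref{lem:3.1}), crossing-direction bookkeeping of $E(B)$ (cf.\ corollaries~\ref{cor:3.5}--\ref{cor:3.9}) --- which the paper calls ``of central importance'' and attributes to \cite{fiol16}; corollary~\ref{cor:3.6} shows concretely that such far-frequency eigenvalues can destroy stabilization wholesale (there: for all $B>0$). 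Without this machinery your four curves are only candidate boundaries, and the claim that they enclose Pyragas regions --- and that no other Pyragas regions exist --- remains unproven. By comparison, the missing order-$\varepsilon^2$ derivations of $\Theta_{k+2}$, $b_k^\iota$, $\sigma_k^\iota$ are routine calculation.
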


See \cite{fiol16} for full details and proofs.

The above stabilization result for rapidly oscillating periodic solutions requires two additional delays in the control term of \eqref{eq:1.2}:
the half-period delay $p_k/2$, and a joint offset $\vartheta$ near 1, in addition to the normalized delay 1 of the reference system.
In the present paper we achieve the same goal with the single delay $p_k/2$ in the control term, i.e. with vanishing delay offset:
	\begin{equation}
	\vartheta =0\,.
	\label{eq:1.17}
	\end{equation}
From now on, and for the rest of the paper, we therefore replace \eqref{eq:1.2} by
	\begin{equation}
	\dot{x} (t) = \lambda f(x(t-1)) +b^{-1} (x(t)+x(t-p/2))\,,
	\label{eq:1.18}
	\end{equation}
with nonlinearities $f \in C^3$ which satisfy assumptions \eqref{eq:1.4}.
Our main result identifies unique nonempty Pyragas intervals $\mathcal{P}_k =(\underline{b}_k, \overline{b}_k)$ of control parameters $b$. The $k$-dimensionally unstable, rapidly oscillating periodic solutions of constant minimal period $p_k=4/(2k+1)$ are stabilized near Hopf bifurcation at $\lambda=\lambda_k$,$\ x=0$, for $b\in \mathcal{P}_k$ and all sufficiently large $k\in \mathbb{N}$. Via $\varepsilon$:= $1/\omega_k = p_k/2\pi$, we also provide $\varepsilon$-expansions, alias $k$-expansions, for the Pyragas boundaries $\underline{b}_k$ and $\overline{b}_k$.
Taylor expansions with respect to $\varepsilon$ again amount to rapid oscillation expansions at $k=\infty$.

\begin{thm}
Consider the system \eqref{eq:1.18} of delayed feedback control for the scalar pure delay equation \eqref{eq:1.3}.
Let assumptions \eqref{eq:1.4} of oddness and normalization hold for the soft spring nonlinearity $f\in C^3$.
Then the following assertions hold for large enough $k \geq k_0$, i.e. for small enough $0<\varepsilon$:= $1/\omega_k= ((k+\tfrac{1}{2})\pi)^{-1} \leq \varepsilon_0$.

The only Pyragas region of nonzero control amplitudes $b$ is the open interval
	\begin{equation}
	\mathcal{P}_k:= \lbrace \underline{b}_k < b< \overline{b}_k \rbrace\,.
	\label{eq:1.19}
	\end{equation}
Up to error terms of order $\varepsilon^4$, the lower and upper boundaries of the Pyragas interval satisfy
	\begin{equation}
	\begin{aligned}
	\underline{b}_k &=
	-\tfrac{1}{2}\pi^2\varepsilon^2 
	-\tfrac{3}{4}\pi^3\varepsilon^3+\ldots\,,\\
	\overline{b}_k &=
	-\tfrac{1}{2}\pi^2\varepsilon^2 
	+\tfrac{1}{4}\pi^3\varepsilon^3+\ldots
	\end{aligned}
	\label{eq:1.20}
	\end{equation}

\label{thm:1.2}
\end{thm}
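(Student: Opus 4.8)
The plan is to reduce asymptotic stability of the bifurcating branch to a spectral condition at the trivial equilibrium, and then to resolve that condition by a two–scale (covering–lift) analysis of the characteristic equation in the rapid–oscillation limit $\varepsilon=1/\omega_k\to 0$. By the standard Hopf theorem for delay equations, together with the assumed supercriticality $f'''(0)<0$, the small–amplitude Kaplan–Yorke orbit bifurcating at $\lambda_k$ is linearly asymptotically stable if and only if, apart from the Hopf pair $\pm i\omega_k$, every root $\mu$ of the characteristic equation \eqref{eq:1.2ch} with $\vartheta=0$,
\[ \mu=\lambda_k e^{-\mu}+b^{-1}\bigl(1+e^{-(p_k/2)\mu}\bigr), \]
satisfies $\mathrm{Re}\,\mu<0$. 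The feedback is noninvasive already on the linear level: since $p_k/2=\pi\varepsilon$ and $\pi\varepsilon\,\omega_k=\pi$ we have $1+e^{-i\pi}=0$ and $\lambda_k e^{-i\omega_k}=i\omega_k$, so $\mu=\pm i\omega_k$ is a root for every $b$. Hence $\mathcal P_k$ is precisely the set of $b\neq 0$ for which no additional root sits in the closed right half plane.

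A first order count shows that the relevant control amplitudes scale as $b=\hat b\,\varepsilon^2$, i.e.\ $b^{-1}=\rho\,\varepsilon^{-2}$ with $\rho=\hat b^{-1}=O(1)$. Since $b^{-1}$ is then of order $\varepsilon^{-2}$, a purely imaginary root $\mu=i\tilde\omega$ forces the resonant factor $1+e^{-i\pi\varepsilon\tilde\omega}$ to be of order $\varepsilon$; this occurs only for $\tilde\omega$ near the odd multiples $(2j+1)\omega_k$, which is exactly the two–scale structure separating the rapid frequency $\omega_k$ from the slow modulation. Writing $\mu=(2j+1)i\omega_k+z$ with the slow variable $z=O(1)$ and expanding in $\varepsilon$, I would derive the cluster equation
\[ i\bigl[(-1)^j e^{-z}-(2j+1)\bigr]+\rho\pi z=\varepsilon\bigl[z+\tfrac12\rho\pi^2 z^2\bigr]+O(\varepsilon^2). \]
Because $p_k=2\pi\varepsilon$, each cluster root yields a Floquet multiplier $e^{\mu p_k}=e^{2\pi\varepsilon z}$, lying inside the unit disk exactly when $\mathrm{Re}\,z<0$; the pinned Hopf root is $z=0$ in the cluster $j=0$.

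At leading order the critical control is $\rho_0=-2/\pi^2$, i.e.\ $b=-\tfrac12\pi^2\varepsilon^2$. I would first check, by a modulus argument, that at $\rho_0$ every cluster $j$ carries precisely two imaginary roots $z=-ij\pi$ and $z=-i(j+1)\pi$, all remaining roots lying strictly in the open left half plane. Feeding in the $\varepsilon$–corrections and solving the marginal conditions $\mathrm{Re}\,z=0$ perturbatively gives the crossing amplitudes $\rho_{j,n}=-2/\pi^2+c_{j,n}\,\varepsilon/\pi$ with $c_{j,n}=1+2n(-1)^{\,n+j}$, so that the two marginal roots of cluster $j$ cross at $c=1-2j$ and $c=3+2j$. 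The value $c=1$ belongs to the pinned root $z=0$ of $j=0$ and is \emph{not} a genuine crossing — it is the supercritical, hence stable, centre direction. Consequently the true crossings accumulate only at $c\in\{\dots,-5,-3,-1\}\cup\{3,5,7,\dots\}$, leaving a unique crossing–free window $c\in(-1,3)$ around $\rho_0$. Its value $c=3$ is the $j=0$ root $z=-i\pi$ (frequency $\omega_{k-1}$), its value $c=-1$ the $j=1$ root $z=-i\pi$ (frequency $3\omega_k-\pi$); inverting $b=\varepsilon^2/\rho$ turns $c=3$ into $\underline b_k$ and $c=-1$ into $\overline b_k$, since $b$ decreases as $c$ increases. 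Checking the crossing directions confirms that $(-1,3)$ is the stable side, and the inversion reproduces exactly the expansions \eqref{eq:1.20}.

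The delicate part will be uniformity in the cluster index. One must show that, for all large $k$ and every $b\in(\underline b_k,\overline b_k)$, no root from any cluster $j$ — nor from the non–resonant frequencies between clusters — re-enters the closed right half plane, with remainder bounds strong enough to survive the inversion to the $O(\varepsilon^4)$ accuracy claimed in \eqref{eq:1.20}. This forces uniform control of the cluster expansion and of the crossing directions as $j\to\infty$, where the marginal roots sit precisely at the edge $|z|\sim\pi j$ of the region in which right–half–plane roots are a priori possible. A second subtle point is the rigorous justification, in the delay setting, that linearized stability of the equilibrium indeed governs orbital stability in spite of the double Floquet multiplier at $+1$ produced by the noninvasive pinning; this is what the exclusion of the $c=1$ crossing must be made to reflect.
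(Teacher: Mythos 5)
Your strategy is, in substance, the paper's own: reduce to a spectral condition at $x\equiv 0$, exploit the resonance structure of dangerous roots near odd multiples $(2j+1)\omega_k$ (your cluster index $j$ is the paper's resonance index $m$, your root index $n$ encodes the paper's hashing index $j$), expand the crossing values of the control perturbatively, and identify the Pyragas interval as the window between the $j=0$ and the $j=1$ crossings. What you compute is correct and matches the paper: the leading value $\rho_0=-2/\pi^2$, i.e. $b\approx-\tfrac12\pi^2\varepsilon^2$; the two marginal roots $z=-ij\pi$, $z=-i(j+1)\pi$ per cluster at $\rho_0$; and the window endpoints $c=3$ and $c=-1$, which are precisely the paper's $B_{0,1}^+$ and $B_{1,1}^-$ of \eqref{eq:4.13}, \eqref{eq:4.12}, and which reproduce \eqref{eq:1.20} exactly. (One slip: redoing your perturbation gives $c_{j,n}=1-2n(-1)^{n+j}$, not $1+2n(-1)^{n+j}$; the values $1-2j$ and $3+2j$ you then use belong to the corrected formula, and agree with \eqref{eq:4.15}, \eqref{eq:4.16}.)

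The genuine gap is the step you explicitly defer: uniformity in the cluster index, and this is not a technicality but the bulk of the proof. A priori, any root with $\mathrm{Re}\,\mu\ge 0$ satisfies $|\mu|\le|\lambda_k|+2/|b|=O(\varepsilon^{-2})$ for $b$ in the candidate window, so clusters up to $j=O(\varepsilon^{-1})$ can carry unstable roots. Your cluster expansion, however, is valid only for $j\ll\varepsilon^{-1/2}$: the shift of the marginal root off $z=-in\pi$ is of size $\varepsilon\pi^2n^2$, and the ``correction'' $c_{j,n}\varepsilon/\pi=O(j\varepsilon)$ ceases to be small relative to $\rho_0$ well before $j\sim\varepsilon^{-1}$. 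Consequently your key assertion --- that all true crossings sit at $c\in\{\dots,-3,-1\}\cup\{3,5,\dots\}$, leaving $(-1,3)$ free --- is established only for each fixed $j$: the crossings of the other roots $n\ne j,\,j+1$, which for fixed $j$ escape to $c=\pm\infty$ as $\varepsilon\to 0$, sit at $c=O\bigl(1/(\varepsilon j)\bigr)$ and hence return to bounded $c$ when $j$ grows like $\varepsilon^{-1}$, while the expansion producing these very formulas has by then lost its validity. Closing this is exactly what the paper's section~\ref{sec:3} (trapping lemma~\ref{lem:3.1}, root counting in proposition~\ref{prop:3.2}, transverse crossing directions in theorem~\ref{thm:3.4}, and the reduction to the inequalities \eqref{eq:3.67}--\eqref{eq:3.69} in corollary~\ref{cor:3.9}) and the whole of section~\ref{sec:5} (the second expansion in $\delta=1/(2m+1)$, lemmata~\ref{lem:5.1}--\ref{lem:5.4}, including the degenerate case $j_m\ge j_m^{\max}$) are for. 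The same applies to your ``modulus argument'' that at $\rho_0$ all non-marginal roots are strictly stable, and to the uniqueness claim over all scales of $b$, not just $b=O(\varepsilon^2)$ (the paper needs corollaries~\ref{cor:2.2} and \ref{cor:3.5}--\ref{cor:3.7} for this): these are asserted, not proven. So your proposal correctly derives the candidate boundaries \eqref{eq:1.20}, but does not yet prove that $(\underline{b}_k,\overline{b}_k)$ is a Pyragas region, nor that it is the only one.
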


Although our proofs and expansions only address Hopf bifurcations at sufficiently large unstable dimensions $k$, and sufficiently rapid oscillation frequencies $\omega_k\,,$ numerical evidence suggests a single Pyragas interval $\mathcal{P}_k\,,$ for any $k\geq 1$. 
We do not pursue these cases here, beyond the evidence provided in figs. \ref{fig:1.2} and \ref{fig:1.3}.

The remaining sections disentangle the elements of the proof of theorem~\ref{thm:1.2}.
We give a brief outline here.
For a summary of sections~\ref{sec:2} -- \ref{sec:5}, on a precise technical level, we refer to the proof of theorem~\ref{thm:1.2} in the concluding section~\ref{sec:6}.

Section~\ref{sec:2}, and most of the remaining sections, address the characteristic equation \eqref{eq:1.2ch}, at $\vartheta=0$, for the linearization at the original Hopf bifurcation points $\lambda = \lambda_k,\ x\equiv 0$.
Elementary as this task may appear, the rapidly oscillatory terms which appear in the limit $\varepsilon = 1/\omega_k \searrow 0$ cause substantial and worthwhile difficulties.

In section~\ref{sec:2} we first recall some elementary results from \cite{fiol16} which address the crossing direction of an additional simple eigenvalue $0$ induced by the control term.
We also introduce a 2-\emph{scale lift}, which artificially represents the large, rapidly oscillatory imaginary parts of $b$-induced Hopf eigenvalues $\mu = i\tilde{\omega}$ by, both, $\tilde{\omega}$ itself and a scaled slow frequency
	\begin{equation}
	\tilde{\Omega} = \varepsilon \tilde{\omega}\,.
	\label{eq:1.21}
	\end{equation}
Note how $\tilde{\Omega} =1,\ \tilde{\omega} = \omega_k$ correspond to the reference Hopf bifurcation at $\lambda = \lambda_k$.
We observe $\tilde{\Omega} \neq 2m$ cannot be at even integer resonance. We introduce a new local scaled slow frequency
	\begin{equation}
	\Omega:= \tilde{\Omega} -\Omega_m\,,\qquad
	\Omega_m := 2m+1\,,
	\label{eq:1.22}
	\end{equation}
near each odd integer resonance $\Omega_m$. Below, in fact, we will be able to focus on $-1 < \Omega \leq 0$.
Since
	\begin{equation}
	\omega \equiv \tilde{\omega} \pmod{2\pi}
	\label{eq:1.23}
	\end{equation}
rotates rapidly through $S^1$, for small $\varepsilon$, we treat the two frequencies $\Omega, \omega$ as independent variables, formally.
They remain related by the \emph{hashing relation} 
\begin{equation}
	\Omega = \varepsilon(\omega +\tfrac{\pi}{2} (1-(-1)^k-(-1)^m)
	-2\pi j)\,,\qquad
	-\tfrac{\pi}{2} \leq \omega <\tfrac{3}{2}\pi\,,
	\label{eq:1.24}
	\end{equation}
$j \in \mathbb{N}$, first discussed in lemma~\ref{lem:2.3}.
We will be able to restrict attention to the case of odd $k$, in this setting.

The above hashing trick, first used in \cite{fiol16}, will be of central importance in our analysis.
In the limit $\varepsilon \searrow 0$, alias $h \nearrow \infty$, the hatching by the hashing lines (\ref{eq:1.24}) fills the $(\Omega,\omega)$-cylinder, densely.
Indeed, the hashing lines define steeply slanted (non-military) ``barber pole'' stripes of horizontal distance $\varepsilon$ around the cylinder.
The hashing trick (\ref{eq:1.24}) allows us to consider $\Omega$ and $\omega$ as independent cylinder variables, temporarily. 
This eliminates $\varepsilon > 0$ from the characteristic equation, altogether, as follows.

For the control amplitude $b$ we proceed with the same scaling
	\begin{equation}
	B:= \tfrac{1}{2} b \varepsilon^{-1}
	\label{eq:1.25}
	\end{equation}
as in \cite{fiol16}.
Inserting the scalings \eqref{eq:1.21} -- \eqref{eq:1.23} into the characteristic equation \eqref{eq:1.2ch} for $\mu=i\tilde{\omega}$ at $\vartheta=0$,
we thus arrive at the 2-\emph{scale characteristic equation}
	\begin{equation}
	0= -ie^{i\omega}+\Omega_m+\Omega +(-1)^mB^{-1}
	\cos (\tfrac{\pi}{2} \Omega) e^{i\tfrac{\pi}{2}\Omega}\,.
	\label{eq:1.26}
	\end{equation}	
In lemmata \ref{lem:2.4}-\ref{lem:2.6} we solve the $\varepsilon$-independent (!) complex characteristic equation \eqref{eq:1.26} for the real variables
	\begin{equation}
	\begin{aligned}
	\omega &= \omega^\pm (\Omega)\,,\\
	B &= B^\pm (\Omega)\,.
	\end{aligned}
	\label{eq:1.27}
	\end{equation}
See figs.~\ref{fig:3.1}, \ref{fig:3.2} below for illustration.

In section \ref{sec:3} we observe how the imaginary parts $\tilde{\omega}$ of unstable eigenvalues $\mu=\mu_{R}+i\tilde{\omega}$ are trapped in certain strips indexed by $m,j$. Instability in such a strip can be induced by Hopf bifurcation at control parameters $B=B_{m,j}^{-}$, and be reduced again at control parameters $B^+_{m,j}$.
This involves an analysis of the crossing directions of $\mu$, transversely to the imaginary axis $\mu_R=0$, as $B$ increases through $B^{\pm}_{m,j}$. See  fig.~\ref{fig:3.3} and theorem \ref{thm:3.4}. 
In corollary \ref{cor:3.6} we conclude the absence of any region of Pyragas stabilization for $B>0$.
Corollary \ref{cor:3.9} summarizes the results of section \ref{sec:3}: we reduce the proof of theorem \ref{thm:1.2} to the three inequalities \eqref{eq:3.67} -- \eqref{eq:3.69} among the Hopf parameter values $B^{\pm}_{m,j}$.

In section \ref{sec:4}, we insert the solution $\omega =\omega^\pm (\Omega)$ from \eqref{eq:1.27} into the hashing \eqref{eq:1.24}.
Inverting the resulting maps $\Omega \mapsto \varepsilon= \varepsilon(\Omega)$, uniformly for bounded $m \leq m_0,\ j \leq j_0$, we obtain $\varepsilon$-expansions
	\begin{equation}
	\begin{aligned}
	\Omega &= \Omega^\pm_{m,j}(\varepsilon)\,,\\
	\omega &= \omega^\pm_{m,j}(\varepsilon)\,,\\
	B&= B^\pm_{m,j}(\varepsilon)
	\end{aligned}
	\label{eq:1.28}
	\end{equation}		
for the frequencies $\tilde{\omega} \equiv \omega$ and the control amplitudes $b=2B\varepsilon$ of the resulting control-induced Hopf bifurcations.
In particular the crossing directions of the induced imaginary Hopf pairs with respect to $b$ sum up such that
	\begin{equation}
	\underline{b} = 2\varepsilon B_{0,1}^+(\varepsilon)\,,\qquad
	\overline{b} =2\varepsilon B_{1,1}^-(\varepsilon)
	\label{eq:1.29}
	\end{equation}		
provide the boundaries of the Pyragas region $\mathcal{P}$ claimed in theorem~\ref{thm:1.2}.
We illustrate the relative location of $B^{\pm}_{m,j}$, in view of the crucial inequalities required in corollary \ref{cor:3.9}, at the end of section \ref{sec:4}; see also fig \ref{fig:4.1}.

It remains to show, however, that the candidate interval $b \in (\underline{b},\overline{b})$ does not suffer any destabilization, due to any other Hopf points $B_{m,j}^\pm$.
This turns out to be equivalent to the estimates
	\begin{equation}
	B_{m, j_{m}+1}^+ < B_{0,1}^+\qquad \text{and} \qquad
	B_{m,j_{m}}^- >B_{1,1}^-
	\label{eq:1.30}
	\end{equation}
at $j_m$:= $[(m+1)/2]$.
See \eqref{eq:3.67}, \eqref{eq:3.69} and corollary \ref{cor:3.9} again.
For bounded $m \leq m_0$, these estimates are suggested by the explicit expansions \eqref{eq:1.28}. In section~\ref{sec:5} we begin to settle the delicate case of large $m, \Omega_m$ by expansions with respect to
	\begin{equation}
	\delta:= \Omega_m^{-1} = 1/(2m+1)\,.
	\label{eq:1.31}
	\end{equation}
Here our second small parameter $\delta > 0$ expands the odd integer resonance regions around $\Omega = \Omega_m = 2m+1$, for large $m$, in much the same way as our first small parameter $\varepsilon$ expanded the discrete parameter $k$, for large $k \geq k_0$, which enumerated the original Hopf bifurcations of more and more rapidly oscillating periodic solutions with higher and higher unstable dimension.

This time, we solve the characteristic equation \eqref{eq:1.26} to obtain expansions
	\begin{equation}
	\begin{aligned}
	\Omega &= \Omega^\pm(\delta, \omega)\,,\\
	B&= B^\pm(\delta, \omega)
	\end{aligned}
	\label{eq:1.32}
	\end{equation}
with respect to $\delta$, uniformly in $|\omega | \leq \pi /2$.
Here $\Omega^-, B^-$ refer to the case $j=j_m$ and $\Omega^+, B^+$ refer to $j=j_{m}+1$.
In section~\ref{sec:5}, the hashing relation \eqref{eq:1.24} then provides a $\delta$-expansion for
	\begin{equation}
	\varepsilon^\pm = \varepsilon^\pm (\delta, \omega)\,.
	\label{eq:1.33}
	\end{equation}
Inserting this into the already established expansions \eqref{eq:1.29} for $\underline{b}, \overline{b}$, and comparing the results, for small $\delta$, we obtain
	\begin{equation}
	B^+(\delta, \omega ) < B_{0,1}^+(\varepsilon^+(\delta, \omega))
	\qquad	\text{and} \qquad 
	B^-(\delta, \omega )> B_{1,1}^-(\varepsilon^-(\delta,\omega))
	\label{eq:1.34}
	\end{equation}
as claimed in \eqref{eq:1.30}.
Well, nontrivial differences only appear at order $\delta^3$ and after additional linearization with respect to $\omega$, at $\omega = \pm \tfrac{\pi}{2}$.

The proof of theorem~\ref{thm:1.2} only involves some discussion of a characteristic equation with two exponential terms of different scales.
Nevertheless, the elementary ingredients to the proof turn out to be surprisingly involved.
Therefore we summarize the various elements of the proof, as scattered across sections~\ref{sec:2} -- \ref{sec:5}, in our final section~\ref{sec:6}.

%%%%%%%%%%%%%%%%%%%%%%%%%%%%%%%%%%%%%%%%%%%%%%%%%%%%%%%%%%%

\textbf{Acknowledgments.}
For many helpful comments and suggestions, as well as most of the figures, we are much indebted to Alejandro~López~Nieto.
Delightful discussions were provided by several participants of the conference in honor of J\"urgen Scheurle, and in particular by P.S. Krishnaprasad.
We are particularly grateful for the lucid remarks of our referees, which helped us improve the somewhat messy presentation.
Ulrike~Geiger typeset the original manuscript, with expertise and diligence.
The authors have been supported by the CRC~910 \emph{``Control of Self-Organizing Nonlinear Systems:
Theoretical Methods and Concepts of Application''} of the Deutsche Forschungs\-gemeinschaft.

%%%%%%%%%%%%%%%%%%%%%%%%%%%%%%%%%%%%%%%%%%%%%%%%%%%%%%%%%%%

\section{The 2-scale characteristic equation}
\label{sec:2}

The characteristic equation \eqref{eq:1.2ch} of the delay equation \eqref{eq:1.2} with vanishing time shift $\vartheta = 0$ reads
	\begin{equation}
	\mu = -(-1)^k \varepsilon^{-1} e^{-\mu} +
	b^{-1} (1+e^{-\pi\varepsilon\mu})\,,
	\label{eq:2.1}
	\end{equation}
at Hopf bifurcation parameter $\lambda = \lambda_k=(-1)^{k+1} \omega_{k}$, minimal period $p_k=2\pi / \omega_k$, and with the abbreviation
	\begin{equation}
	\varepsilon = \omega_k^{-1} = 1/((k+\tfrac{1}{2})\pi)
	\label{eq:2.2}
	\end{equation}
for $k\in \mathbb{N}$.
We decompose the eigenvalue $\mu =\mu_R +i\tilde{\omega}$ into real and imaginary parts and define the auxiliary slow frequency $\tilde{\Omega}$:= $ \varepsilon \tilde{\omega}$; see \eqref{eq:1.21}.
For the convenient choice of
	\begin{equation}
	\omega :\equiv \tilde{\omega} \pmod{2\pi}\,,\qquad 
	-\tfrac{1}{2}\pi \leq \omega < \tfrac{3}{2} \pi\,,
	\label{eq:2.3}
	\end{equation}
we obtain the crucially important 2-scale characteristic equation
	\begin{equation}
	\begin{aligned}
	0 &= \chi (\varepsilon, \delta, \omega, \Omega, B, \mu_R):=\\
	&= -\varepsilon \mu_R + i \tilde{\Omega}- (-1)^k e^{-\mu_R+i\tilde{\omega}}
	-\tfrac{i}{B} \sin(\tfrac{\pi}{2}\Omega)
	e^{-\pi\varepsilon\mu_R+i\tfrac{\pi}{2}\Omega}
	+ \tfrac{1}{2B}(1-e^{-\pi\varepsilon\mu_R}),
	\end{aligned}
	\label{eq:2.4}
	\end{equation}
by some elementary arithmetic and with the abbreviations
	\begin{equation}
	\begin{aligned}
	\tilde{\Omega}&:= \varepsilon\tilde{\omega}\,;\\
	B&:= \tfrac{1}{2}b\varepsilon^{-1}\,;\\
	\delta&:= \Omega_m^{-1} = 1/(2m+1)\,;\\
	\Omega&:= \tilde{\Omega}-\Omega_m\,.
	\end{aligned}
	\label{eq:2.5}
	\end{equation}
In particular we have utilized the complex conjugate of (\ref{eq:2.1}).
Evidently, the solutions of \eqref{eq:2.4} for even parity of $k$ are trivially obtained from the solutions for odd parity if we replace $\omega$ by $\omega +\pi \pmod{2\pi}$.
For later use we note the relation
	\begin{equation}
	\sin( \tfrac{\pi}{2} \Omega ) = -(-1)^m \cos(\tfrac{\pi}{2} \tilde\Omega)\,;
	\label{eq:derivation}
	\end{equation}
see also \eqref{eq:2.42}.

For interpretation we recall that $\tilde{\Omega}=1$ indicates $\tilde{\omega}=\omega_{k}$, i.e. a $1:1$ resonance $\tilde{\omega}=\omega_{k}$ of the imaginary part $i \tilde{\omega}$ of $\mu$, under feedback control, with the original Hopf eigenvalue $\mu=i \omega_{k}$ at parameter $\lambda=\lambda_{k}$. 
Similarly, $\tilde{\Omega}=\Omega_{m}=(2m+1)$ indicates an odd integer $(2m+1):1$ resonance. The parameters $\varepsilon, \delta$ make $k,m$ look continuous, respectively, and replace them eventually.

For complex nonreal eigenvalues $\mu$, the imaginary part $\tilde{\omega} = \tilde{\Omega}/\varepsilon$ can be taken positive, without loss, and we may assume
	\begin{equation}
	\begin{aligned}
	-1 < \Omega \leq 0\,, \qquad &\text{for } m=0\,,\\
	-2 <\Omega \leq 0\,,\qquad &\text{for } m\in \mathbb{N}\,.
	\end{aligned}
	\label{eq:2.7}
	\end{equation}
We also note that $\chi$ is real analytic in all variables, for $B \ne 0$. 
Since $\mu_{R}=0$, for purely imaginary eigenvalues, the characteristic function $\chi$ of \eqref{eq:2.4} simplifies and becomes
	\begin{equation}
	\chi_0 (\delta, \omega, \Omega, B):=
	-i \chi(\varepsilon, \delta, \omega, \Omega,\mu_R=0)=
	\tilde{\Omega}+(-1)^k i e^{i\omega}-
	\tfrac{1}{B}\sin (\tfrac{\pi}{2}\Omega) e^{i\tfrac{\pi}{2}\Omega}\,,
	\label{eq:2.8}
	\end{equation}
in the Hopf case. 
Note how $\varepsilon$ has disappeared from the characteristic equation \eqref{eq:2.4}, in \eqref{eq:2.8}, at the price of a hidden hashing relation between $\omega$ and $\Omega$; see lemma \ref{lem:2.3} below.

In the present section we collect some elementary facts about the 2-scale characteristic equation \eqref{eq:2.4} -- \eqref{eq:2.5}.
Real eigenvalues $\mu = \mu_R$, i.e. the case $\tilde{\omega} = \tilde{\Omega} =0$, are addressed in lemma~\ref{lem:2.1}.
As a corollary we eventually obtain how $B$ has to be negative in any Pyragas region; see corollaries~\ref{cor:2.2} and \ref{cor:3.6}.
With a brief interlude on hashing in lemma~\ref{lem:2.3}, we embark on our discussion of purely imaginary eigenvalues $\mu_R =0$.
In lemma~\ref{lem:2.4} we show how to eliminate any two of the three variables $\omega, \Omega, B$ from the resulting $\varepsilon$-independent 2-scale characteristic equation
	\begin{equation}
	0 = \chi_0(\delta, \omega, \Omega, B)\,.
	\label{eq:2.9}
	\end{equation}
In particular we identify a quadratic loop
	\begin{equation}
	Q(\delta, \Omega, B) =0\,,
	\label{eq:2.10}
	\end{equation}
by elimination of $\omega$, such that purely imaginary eigenvalues $\mu_R=0$ can occur only if \eqref{eq:2.10} is satisfied.
In section~\ref{sec:3} we will observe how positive real parts, $\mu_R>0$, can only occur inside the loop, and negative real parts, i.e. linear stability as required in Pyragas regions, are confined to the exterior.
Via hashing, this leads to the definition of crucial pairs of Hopf parameter values
		\begin{equation}
		B_{m,j}^- < B_{m,j}^+ <0
		\label{eq:2.11}
		\end{equation}
such that the loss of stability caused at $B_{m,j}^-$, by a pair of purely imaginary eigenvalues, is recovered when the control parameter $B<0$ increases further to pass the matching Hopf value $B_{m,j}^+$.

\begin{lem}
The characteristic equation \eqref{eq:2.1} possesses a real zero eigenvalue, $\mu = \mu_R=0$, if and only if
	\begin{equation}
	B = B_0=(-1)^k\,.
	\label{eq:2.12}
	\end{equation}
The zero eigenvalue $\mu$ is algebraically simple, and its continuation $\mu = \mu(B)$ satisfies
	\begin{equation}
	\mathrm{sign}\, \operatorname{Re} \mu'(B_0) = (-1)^k =B_0\,,
	\label{eq:2.13}
	\end{equation}
i.e. $\mu(B)$ increases towards larger $|B|$.
\label{lem:2.1}
\end{lem}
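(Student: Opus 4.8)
The plan is to treat the lemma as a direct computation on the characteristic equation \eqref{eq:2.1}, since every assertion reduces to a single algebraic relation once $\mu=0$ is inserted. First I would record the (entire) characteristic function
\[
g(\mu,B) := \mu + (-1)^k \varepsilon^{-1} e^{-\mu} - \tfrac{1}{2B\varepsilon}\bigl(1+e^{-\pi\varepsilon\mu}\bigr),
\]
recalling $b=2B\varepsilon$ from \eqref{eq:2.5}, so that the eigenvalues $\mu$ are exactly the zeros of $g(\cdot,B)$. Setting $\mu=0$ collapses both exponentials to $1$ and leaves the relation $(-1)^k\varepsilon^{-1}=2\cdot\tfrac{1}{2B\varepsilon}=B^{-1}\varepsilon^{-1}$, i.e. $B=(-1)^k$. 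As this is a single linear equation in $B^{-1}$ with a unique nonzero solution, it settles both directions of the equivalence and yields $B_0=(-1)^k$ as in \eqref{eq:2.12}.

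Next I would establish algebraic simplicity, i.e. that $\mu=0$ is a simple zero of $g(\cdot,B_0)$, by checking $\partial_\mu g(0,B_0)\neq 0$. Differentiating gives
\[
\partial_\mu g(\mu,B) = 1 - (-1)^k\varepsilon^{-1}e^{-\mu} + \tfrac{\pi}{2B}\,e^{-\pi\varepsilon\mu},
\]
and at $(\mu,B)=(0,B_0)$, using $B_0^{-1}=(-1)^k$ together with the key identity $\varepsilon^{-1}=(k+\tfrac12)\pi$ from \eqref{eq:2.2}, the two $(-1)^k$-terms combine into $\tfrac{\pi}{2}-\varepsilon^{-1}=-k\pi$, so that
\[
\partial_\mu g(0,B_0) = 1 + (-1)^k\bigl(\tfrac{\pi}{2}-\varepsilon^{-1}\bigr) = 1 - (-1)^k k\pi.
\]
For every $k\geq 1$ this is nonzero, since $|k\pi|\geq\pi>1$; hence the zero eigenvalue is algebraically simple.

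The crossing direction then follows from the implicit function theorem. Since $\partial_\mu g(0,B_0)$ is real and nonzero, a unique analytic branch $\mu=\mu(B)$ with $\mu(B_0)=0$ exists near $B_0$, and as $g$ takes real values on real arguments the branch is real, so $\operatorname{Re}\mu'(B_0)=\mu'(B_0)$. I would then compute $\partial_B g(0,B_0)=\tfrac{1}{2\varepsilon B_0^{2}}\cdot 2=\varepsilon^{-1}$ and conclude
\[
\mu'(B_0) = -\frac{\partial_B g(0,B_0)}{\partial_\mu g(0,B_0)} = -\frac{\varepsilon^{-1}}{1-(-1)^k k\pi}.
\]
The denominator is negative for even $k\geq 2$ and positive for odd $k\geq 1$, so in both cases $\mathrm{sign}\,\mu'(B_0)=(-1)^k=B_0$, which is precisely \eqref{eq:2.13}; equivalently $\mu'(B_0)\cdot B_0>0$, so $\mu$ increases as $|B|$ grows.

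I expect no genuine obstacle here: the computation is short and elementary. The only points requiring care are the bookkeeping of the parity factor $(-1)^k$, the use of $\varepsilon^{-1}=(k+\tfrac12)\pi$, which is exactly what makes the $k$-dependence surface in $\partial_\mu g(0,B_0)$, and the observation that the excluded case $k=0$ is precisely the one where the sign assertion \eqref{eq:2.13} would fail.
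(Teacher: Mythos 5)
Your proof is correct and follows essentially the same route as the paper: insert $\mu=0$ into the characteristic equation to get $B_0=(-1)^k$, verify the $\mu$-derivative is nonzero for algebraic simplicity, and use implicit differentiation with respect to $B$ for the crossing direction. Your formula $\mu'(B_0)=-\varepsilon^{-1}/\bigl(1-(-1)^k k\pi\bigr)$ is exactly the paper's relation $\mu'_R\bigl(1-(\tfrac{\pi}{2}+B_0)\varepsilon\bigr)=B_0$ after multiplying through by $B_0\varepsilon^{-1}$, so the two computations agree term by term, differing only in that you work with the unscaled characteristic function while the paper uses its $\varepsilon$-scaled form.
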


\begin{proof}[\textbf{Proof.}]
For real eigenvalues $\mu = \mu_R$, $\omega=\tilde{\Omega}=0$, and $\Omega=-1$, the characteristic equation \eqref{eq:2.4} reads
	\begin{equation}
	\varepsilon \mu_R= -(-1)^k e^{-\mu_R}+
	\tfrac{1}{2B}(1+e^{-\pi\varepsilon\mu_R})\,.
	\label{eq:2.14}
	\end{equation}
Inserting $\mu_R=0$ proves claim \eqref{eq:2.12}.
Partial differentiation with respect to $\mu_R$ shows simplicity of $\mu_R=0$ at $B= B_0=(-1)^k$.
Implicit differentiation with respect to $B$ at $B= B_0,\ \mu_R=0$ shows
	\begin{equation}
	\begin{aligned}
	\varepsilon\mu'_R= (-1)^k &\mu'_R +\tfrac{1}{2}(-1)^k
	\cdot(-\pi\varepsilon)\mu'_R-1\,,\quad \text{i.e.}\\
	&\mu'_R(1-(\tfrac{\pi}{2}+B_0)\varepsilon) = B_0\,.
	\end{aligned}
	\label{eq:2.15}
	\end{equation}
For $k\in \mathbb{N}$, $\varepsilon=((k+\tfrac{1}{2})\pi)^{-1}$, the coefficient of $\mu'_R$ is positive, and the lemma is proved.
\end{proof}

Of course we may solve the real characteristic equation \eqref{eq:2.14} for $B=B(\mu_R)$, explicitly, to obtain
	\begin{equation}
	B=B(\mu_R)=\frac{1}{2} \cdot
	\frac{1+\exp (-\pi\varepsilon\mu_R)}
	{\varepsilon\mu_R+(-1)^k\exp (-\mu_R)}	\ .
	\label{eq:2.16}
	\end{equation}
For odd $k$, vanishing denominator indicates the unique positive real eigenvalue $\mu = \mu_R$ of the original problem \eqref{eq:1.3} without control.
For even $k$, the denominator is positive for all real $\mu_R$, because $\varepsilon=((k+\tfrac{1}{2} )\pi)^{-1} <e$ for $k\in \mathbb{N}_0\,$.
Moreover, $\pi \varepsilon \leq2/3$ for all $k \in \mathbb{N}$ implies $\lim B(\mu_R)=0$ for $\mu_R \rightarrow \pm \infty$.
Let
	\begin{equation}
	1 < B_{\max} := \max B(\mu_R) <\infty
	\label{eq:2.17}
	\end{equation}
denote the maximum over $\mu_R \in \mathbb{R}$, for even $k$.
Indeed $B_{\max}> 1=B_{0}=B(0)$, by lemma~\ref{lem:2.1}.
This allows us to determine the \emph{even/odd parity} of the total algebraic count $E(B)$ of all eigenvalues $\mu$, real or complex, with strictly positive real part. 
We write $E(B) \equiv 0\, (\mathrm{mod}\, 2)$ or $E(B) \equiv \mathrm{even}$, for even parity of $E(B)$, 
and analogously $E(B) \equiv 1\, (\mathrm{mod}\, 2) \equiv \mathrm{odd}$, for odd parity.

\begin{cor}
Let $k\in \mathbb{N}$ be odd.
Then the unstable parities \emph{(mod 2)} are given by
	\begin{equation}
	E(B) \equiv 
	\left\lbrace 
	\begin{aligned}%{array}{lll}
	&\mathrm{odd} 
	\qquad & -&\infty < B <-1\,,\\
	&\mathrm{even}
	\qquad &\mathrm{for}\qquad -&1\leq B<0\,,\\
	&\mathrm{odd}
	\qquad & &0 \leq B<+\infty\,.
	\end{aligned}%{array}
	\right.
	\label{eq:2.18}
	\end{equation}
For even $k\in \mathbb{N}$, the unstable parities \emph{(mod 2)} are given by
	\begin{equation}
	E(B) \equiv 
	\left\lbrace 
	\begin{aligned}%{array}{lll}
	&\mathrm{even}
	\qquad & -&\infty < B <0\,,\\
	&\mathrm{odd}
	\qquad &\mathrm{for}\qquad &0<B<1\,,\\
	&\mathrm{even}
	\qquad & &1\leq B< +\infty\,.
	\end{aligned}%{array}
	\right.
	\label{eq:2.19}
	\end{equation}
Pyragas regions $E(B)=0$ require even parity $0 \,(\mathrm{mod} \,2)$, of course.
For even $k$, they also require
	\begin{equation}
	B> B_{\max} >1\,,
	\label{eq:2.20}
	\end{equation}
in case $B >0$.
\label{cor:2.2}
\end{cor}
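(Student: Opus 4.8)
The plan is to exploit that the parity of the unstable count $E(B)$ is far more robust than $E(B)$ itself. Since the characteristic equation \eqref{eq:2.1} has real coefficients, nonreal eigenvalues occur in complex conjugate pairs and therefore cross the imaginary axis $\operatorname{Re}\mu = 0$ two at a time. Hence the parity of $E(B)$ can only change when a \emph{real} eigenvalue $\mu = \mu_R$ crosses through the origin $\mu = 0$, or when a real eigenvalue escapes to, or enters from, $\mu_R = \pm\infty$ in the singular feedback limit $B \to 0$ (that is, $b \to 0$). By Lemma~\ref{lem:2.1}, a real zero eigenvalue occurs precisely at $B = B_0 = (-1)^k$, and the transversality \eqref{eq:2.13} guarantees that this crossing flips the parity exactly once. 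First I would record these two observations and note that no other value of $B \neq 0$ can alter the parity.

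Second, I would fix the reference parity at the two vanishing-feedback limits $B \to \pm\infty$. There the feedback term $b^{-1}(1 + e^{-\pi\varepsilon\mu})$ tends to zero uniformly on compact sets, so the spectrum converges to that of the pure delay equation \eqref{eq:1.3}, for which $E = k$ by \eqref{eq:1.8}; in particular the parity equals that of $k$ at both ends. Note that the two critical eigenvalues $\pm i\omega_k$ stay pinned on the imaginary axis for \emph{all} $B$, because $1 + e^{-\pi\varepsilon\mu} = 0$ at $\mu = i\omega_k$ (noninvasiveness at the linear level), so they are never counted in $E$ and never affect parity. Because the singular value $B = 0$ separates $B > 0$ from $B < 0$, I would track the positive and the negative half-lines separately, each anchored at its own reference $\pm\infty$ of parity $k$. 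On each half-line the only admissible parity flip sits at $B = (-1)^k$: for odd $k$ it lies on the negative side, splitting $(-\infty,0)$ into $(-\infty,-1)$ of odd parity and $(-1,0)$ of even parity, while the positive side carries no flip and stays odd; for even $k$ it lies on the positive side, splitting $(0,\infty)$ into $(0,1)$ odd and $(1,\infty)$ even, while the negative side stays even. This yields \eqref{eq:2.18} and \eqref{eq:2.19} at once, without any need to resolve the singular limit $B = 0$ explicitly, since consistency across $B = 0$ is automatically enforced by the two references. Since $E(B) = 0$ is even, the Pyragas regions are thereby confined to the even-parity subintervals.

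Finally, for the refined necessary condition \eqref{eq:2.20} I would return to the explicit real branch $B = B(\mu_R)$ of \eqref{eq:2.16}. For even $k$ the denominator is positive throughout, so $B(\mu_R) > 0$, with $B(0) = 1$ and, by a short computation, $B'(0) = 1 - \tfrac{1}{2}(\pi+2)\varepsilon > 0$ for all admissible $\varepsilon$; together with $B(\mu_R) \to 0^+$ as $\mu_R \to \pm\infty$, this forces the maximum $B_{\max} > 1$ to be attained at some $\mu_R^{\max} > 0$. Consequently the decreasing far branch $\mu_R \in (\mu_R^{\max}, \infty)$ already supplies, for every $B \in (0, B_{\max})$, a solution $\mu_R > 0$ of $B = B(\mu_R)$, i.e.\ an unstable real eigenvalue, so that $E(B) \geq 1$ and Pyragas stabilization is impossible; only $B > B_{\max}$ clears all real eigenvalues off the unstable half-plane, giving \eqref{eq:2.20}. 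The main obstacle, and the point deserving the most care, is precisely the bookkeeping at the singular feedback limit $B = 0$: one must ensure that parity is well defined and locally constant on each side and that no additional real eigenvalue slips across the axis there. Using the two independent references at $B = \pm\infty$ is what makes this rigorous while sidestepping a direct and delicate analysis of the $b \to 0$ blow-up.
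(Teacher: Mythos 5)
Your proof of the parity claims \eqref{eq:2.18}, \eqref{eq:2.19} is correct and is essentially the paper's own argument: complex eigenvalues pair up, so parity is governed by real eigenvalues alone; lemma~\ref{lem:2.1} gives the unique transverse zero crossing at $B_0=(-1)^k$; and the two half-lines $B>0$, $B<0$ are anchored at vanishing control, with \eqref{eq:2.16} ruling out real eigenvalues entering from $\mu_R=+\infty$ at finite $B\neq 0$. The only cosmetic difference is the anchor: you import $E=k$ from \eqref{eq:1.8} via spectral convergence as $B\to\pm\infty$, whereas the paper reads the parity off the real eigenvalues of \eqref{eq:2.16} at vanishing control (exactly one positive real eigenvalue, from the vanishing denominator, for odd $k$; none for even $k$); your side remark that $\mu=\pm i\omega_k$ stays pinned for all $B$ is true but not needed, since a conjugate pair never affects parity.

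There is, however, one step in your treatment of \eqref{eq:2.20} that does not follow as stated. You argue that $B(0)=1$, $B'(0)=1-\tfrac12(\pi+2)\varepsilon>0$, and $B(\mu_R)\to 0$ as $\mu_R\to\pm\infty$ ``force'' the \emph{global} maximum $B_{\max}$ to be attained at some $\mu_R^{\max}>0$. That inference is invalid in general: those three facts only give $\sup_{\mu_R>0}B(\mu_R)>1$; they do not exclude a still higher bump of $B(\cdot)$ at negative $\mu_R$ (a function with value $1$ and positive slope at $0$, decaying at $\pm\infty$, can easily have its global maximum far to the left). If $B_{\max}$ were attained at some $\mu_R^{\max}<0$, your intermediate-value argument on the ``far branch'' $(\mu_R^{\max},\infty)$ would produce roots of $B=B(\mu_R)$ that may be \emph{negative}, hence no unstable eigenvalue, and the necessity of $B>B_{\max}$ would break down for $B$ between $\sup_{\mu_R>0}B$ and $B_{\max}$. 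The missing ingredient is the estimate that, for even $k$, $B(\mu_R)<1$ for all $\mu_R<0$: writing $\mu_R=-u$, $u>0$, this amounts to $2(e^u-\varepsilon u)>1+e^{\pi\varepsilon u}$, which holds because the difference vanishes at $u=0$ and has derivative $2e^u-2\varepsilon-\pi\varepsilon e^{\pi\varepsilon u}\geq(2-\pi\varepsilon)e^u-2\varepsilon\geq\tfrac43-2\varepsilon>0$, using $\pi\varepsilon\leq\tfrac23$ and $e^{\pi\varepsilon u}\leq e^u$. With that estimate, the maximum does lie at positive $\mu_R$ and your argument closes. In fairness, the paper's proof is equally terse at this point (it simply asserts that the double real eigenvalue forming at $B=B_{\max}$ is positive), but in a self-contained write-up this is the one place where an actual estimate is needed rather than the stated inference.
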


\begin{proof}[\textbf{Proof.}]
Since nonreal complex eigenvalues occur in complex conjugate pairs, the real eigenvalues alone determine the parity.
For odd $k$ and at $B = \pm \infty$, i.e. at vanishing control, instability by a simple positive real eigenvalue follows from the vanishing denominator in \eqref{eq:2.16}.
Lemma~\ref{lem:2.1} then implies claim \eqref{eq:2.18}.

For even $k$, real eigenvalues are absent if $B <0$ or $B>B_{\max}$; see \eqref{eq:2.16}, \eqref{eq:2.17}.
At $B=B_{\max}>1$, a pair of complex eigenvalues merges and forms a positive double real eigenvalue.
Decreasing $B$ further, one of these two positive eigenvalues becomes negative, at $B=B_0=1$, and the other real eigenvalue remains positive and simple; see lemma~\ref{lem:2.1}.
This proves claims \eqref{eq:2.19}, \eqref{eq:2.20}, and the corollary.
\end{proof}

We study the case of purely imaginary nonzero eigenvalues $\mu = i\tilde{\omega} \neq 0$, $\mu_R=0$ next.
The 2-scale characteristic equation \eqref{eq:2.4} then simplifies to \eqref{eq:2.8}, \eqref{eq:2.9}, i.e.
	\begin{equation}
	\chi_0 (\delta, \omega,\Omega, B) :=
	-i \chi(\varepsilon, \delta, \omega,\Omega, B, \mu_R=0)=0\,.
	\label{eq:2.21}
	\end{equation}
Strictly speaking, however, the frequency $\tilde{\omega}$ and the slow frequency $\tilde{\Omega}$ are still related by the linear hashing relation $\tilde{\Omega} = \varepsilon\tilde{\omega}$; see \eqref{eq:1.21} -- \eqref{eq:1.24}.
We clarify this relation next.

\begin{lem}
Consider $\tilde{\omega}>0,\ \Omega = \tilde{\Omega}-\Omega_m$ with $\Omega_m=2m+1$, and $-1<\Omega\leq0$ for $m=0$, but $-2<\Omega \leq0$ for $m\in \mathbb{N}$.
Then the hashing relation $\tilde{\Omega} = \varepsilon\tilde{\omega}$, with $\varepsilon= \omega_k^{-1}$, is equivalent to
	\begin{equation}
	\Omega = \varepsilon(\omega +\tfrac{\pi}{2}(1-(-1)^k-(-1)^m)-2\pi j)\,.
	\label{eq:2.22}
	\end{equation}
Here the representative $\omega\equiv\tilde{\omega}$ \emph{(mod 2$\pi$)} is chosen such that
	\begin{equation}
	-\tfrac{\pi}{2} \leq \omega <\tfrac{3}{2}\pi\,,
	\label{eq:2.23a}
	\end{equation}
$j\in \mathbb{N}$ is chosen such that
\begin{equation}
	j= 
	\left\lbrace 
	\begin{aligned}%{array}{lll}
	&0 
	&\qquad &\mathrm{for}\ k,m \ \mathrm{both}\ \mathrm{even}\,,\\
	&1
	&\qquad &\mathrm{otherwise}\,,
	\end{aligned}%{array}
	\right.
	\label{eq:2.23b}
	\end{equation}
holds at $\Omega =0$, and
	\begin{equation}
	\tilde{\omega} = \omega+2\pi(km+[(k+1)/2]+[(m+1)/2]-j)
	\equiv \omega \pmod{2\pi}\,.
	\label{eq:2.24}
	\end{equation}
\label{lem:2.3}
\end{lem}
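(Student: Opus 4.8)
The plan is to read the hashing relation $\tilde{\Omega} = \varepsilon\tilde{\omega}$ entirely in terms of the bounded representative $\omega \in [-\tfrac{\pi}{2},\tfrac{3}{2}\pi)$ together with an integer winding number, and to show that the resulting bookkeeping is exactly what \eqref{eq:2.22} and \eqref{eq:2.24} encode. First I would introduce the unique integer $n$ with $\tilde{\omega} = \omega + 2\pi n$, so that $\omega \equiv \tilde{\omega} \pmod{2\pi}$ falls in the prescribed fundamental interval \eqref{eq:2.23a}. Substituting this, together with $\varepsilon^{-1} = \omega_k = (k+\tfrac{1}{2})\pi$, into $\Omega = \tilde{\Omega} - \Omega_m = \varepsilon\tilde{\omega} - (2m+1)$ converts the single real constraint $\tilde{\Omega} = \varepsilon\tilde{\omega}$ into a relation among $\omega$, $\Omega$ and the integers $n, j, k, m$.

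The core of the argument is then purely arithmetic. Writing $2m+1 = \varepsilon(2m+1)(k+\tfrac{1}{2})\pi = \varepsilon[(2mk+k+m)\pi + \tfrac{\pi}{2}]$, I obtain
\[
\Omega = \varepsilon\left[\omega + 2\pi n - (2mk+k+m)\pi - \tfrac{\pi}{2}\right].
\]
Matching this with the claimed form \eqref{eq:2.22} amounts to the scalar identity $2\pi n - (2mk+k+m)\pi - \tfrac{\pi}{2} = \tfrac{\pi}{2}(1-(-1)^k-(-1)^m) - 2\pi j$. Dividing by $\pi$ and separating half-integer from integer contributions, the parity combination $\tfrac{1}{2}(1-(-1)^k-(-1)^m)$ is absorbed using the elementary identities $k - \tfrac{1}{2}(-1)^k = 2[(k+1)/2] - \tfrac{1}{2}$ and $m - \tfrac{1}{2}(-1)^m = 2[(m+1)/2] - \tfrac{1}{2}$. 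This collapses the relation to $n + j = mk + [(k+1)/2] + [(m+1)/2]$, which is exactly \eqref{eq:2.24} solved for $n$, and at the same time establishes the equivalence of \eqref{eq:2.22} with the hashing relation: for each admissible $(\omega,\Omega,j)$ the winding number $n$ is uniquely recovered, and conversely.

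It remains to pin down the value of $j$ at $\Omega = 0$ asserted in \eqref{eq:2.23b}. There I would evaluate $\tilde{\omega} = (2m+1)\varepsilon^{-1} = (2mk+k+m)\pi + \tfrac{\pi}{2}$, observe that $2mk+k+m$ has the parity of $k+m$, and reduce modulo $2\pi$: the representative is $\omega = \tfrac{\pi}{2}$ when $k+m$ is even, and $\omega \equiv \tfrac{3}{2}\pi \equiv -\tfrac{\pi}{2}$ when $k+m$ is odd. Reading off the corresponding $n$ and inserting it into \eqref{eq:2.24} then yields $j$. I expect the only genuine obstacle to lie in the boundary case of odd $k+m$, where $\tilde{\omega}$ lands precisely on the excluded endpoint $\tfrac{3}{2}\pi$ of the fundamental interval; here the value of $j$ is fixed by resolving the endpoint through continuity from $\Omega \to 0^-$ (equivalently, by assigning the representative $\tfrac{3}{2}\pi$ rather than $-\tfrac{\pi}{2}$ at this isolated point), which is exactly what produces the clean dichotomy $j=0$ for $k,m$ both even and $j=1$ otherwise. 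All remaining steps are routine consistency checks on parities.
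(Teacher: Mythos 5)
Your proposal is correct and follows essentially the same route as the paper: the paper exploits that both hashing relations are affine in $(\omega,\Omega)$ with slope $\varepsilon$ and therefore only verifies the single point $\Omega=0$, while you substitute $\tilde{\omega}=\omega+2\pi n$ for general $\Omega$, but both arguments collapse, via the same parity identities, to the relation $n+j = mk+[(k+1)/2]+[(m+1)/2]$, which is \eqref{eq:2.24}. Your explicit resolution of the excluded endpoint $\omega=\tfrac{3}{2}\pi$ for odd $k+m$ by continuity from $\Omega\to 0^-$ is precisely what the paper intends by its remark that the choice of $j$ in \eqref{eq:2.23b} ensures the range \eqref{eq:2.23a} for $\Omega<0$ near $\Omega=0$.
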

\begin{proof}[\textbf{Proof.}]
The hashing relations $\tilde{\Omega}=\varepsilon\tilde{\omega}$ and \eqref{eq:2.22} are both affine linear in $\tilde{\omega},\ \omega$ with slope $\varepsilon$.
To show their equivalence, via $\tilde{\Omega}=\Omega +\Omega_m$ in \eqref{eq:2.5} and definition \eqref{eq:2.24} of $\omega$, we only have to check \eqref{eq:2.22} at $\varepsilon \tilde{\omega}=\tilde{\Omega} = \Omega_m$ and $\Omega =0$.
Indeed we obtain
	\begin{equation}
	\begin{aligned}
	\omega:
	&= 
	\tilde{\omega}-2\pi(km+[(k+1)/2]+[(m+1)/2]-j)=\\
	&=
	\varepsilon^{-1}\cdot \Omega_m-\pi(2km+2[(k+1)/2]+2[(m+1)/2])+2\pi j=\\
	&=
	(k+\tfrac{1}{2})\pi\cdot(2m+1)
		-\pi(2km+k+m)+2\pi j-\\
	& 
	\phantom{=(k+\tfrac{1}{2})\pi\cdot(2m+1)\;}
		-\pi((2[(k+1)/2]-k)+(2[(m+1)/2]-m))=\\
	&=
	\tfrac{\pi}{2}+2\pi j-\pi(\tfrac{1}{2}(1-(-1)^k)
	+\tfrac{1}{2}(1-(-1)^m))=\\
	&=
	2\pi j -\tfrac{\pi}{2}(1-(-1)^k-(-1)^m)\,,
	\end{aligned}
	\label{eq:2.25}
	\end{equation}
as required by \eqref{eq:2.22}.
The choice of $j$ in \eqref{eq:2.23b} ensures the ranges \eqref{eq:2.23a} for $\Omega < 0$ near $\Omega=0$.
This proves the lemma.
\end{proof}

\begin{figure}[t!]
\centering \includegraphics[width= 1.05\textwidth]{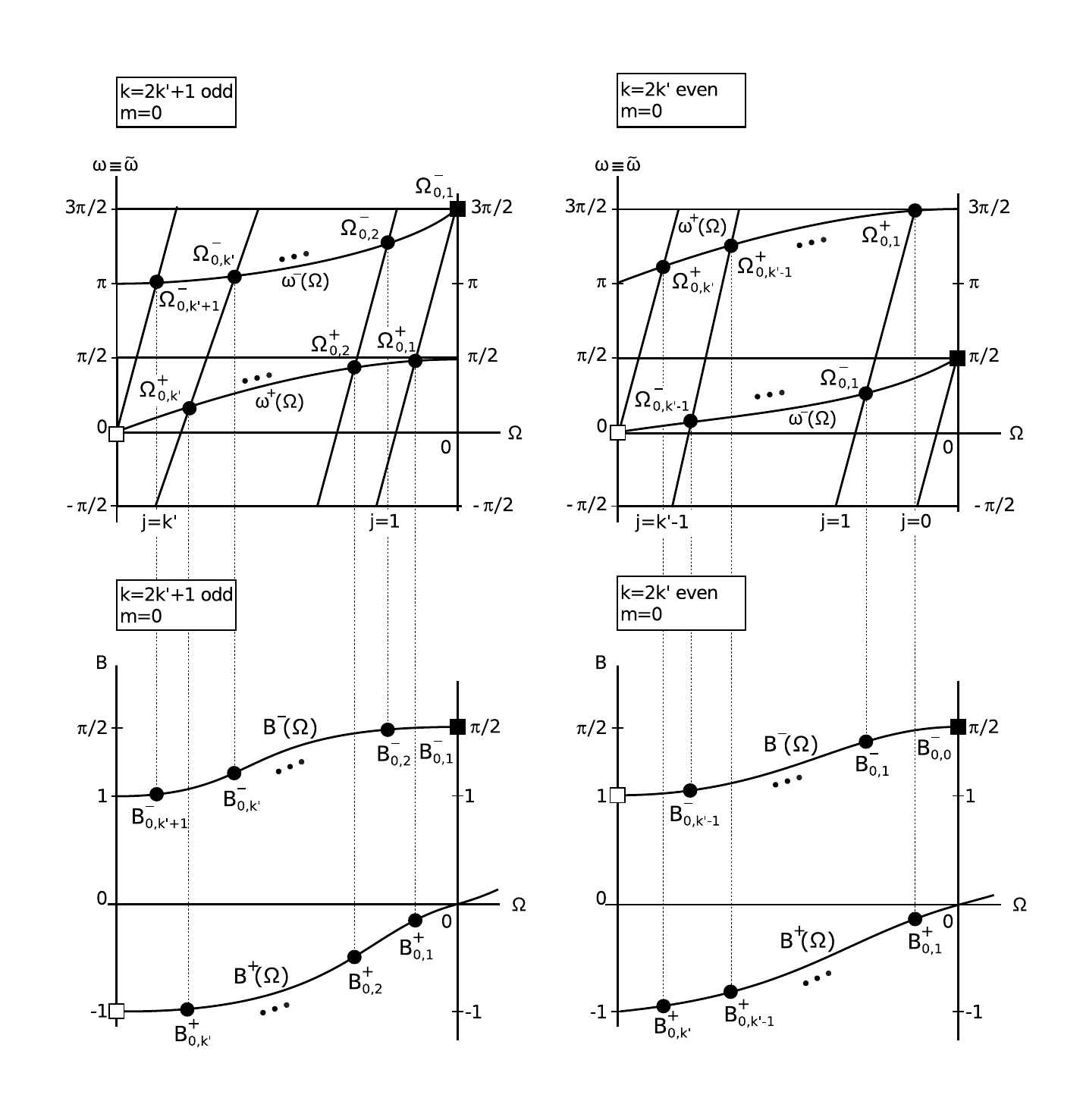}
\caption{\emph{
Purely imaginary eigenvalues $\mu = i \tilde{\omega} = i\tilde{\omega}_{0,j}^\pm$ and Hopf control parameters $B = B_{0,j}^\pm$ at $\varepsilon= \omega_k^{-1} = ((k+\tfrac{1}{2})\pi)^{-1}$.
The horizontal axis is $-1 \leq \Omega = \tilde{\Omega}-\Omega_0 \leq 0$, with $\Omega_0=1$.
Left: odd $k$.
Right: even $k$.
Top row: hashing $\tilde{\Omega} = \varepsilon \tilde{\omega}$ alias $\Omega = \varepsilon(\omega + \ldots)$ according to lemma~\ref{lem:2.3}, \eqref{eq:2.22} -- \eqref{eq:2.24} and \eqref{eq:3.36}.
Note how $\tilde{\Omega} = \tilde{\Omega}_{0,j}^\pm = \varepsilon\tilde{\omega}_{0,1}^\pm$ enumerate the Hopf frequencies defined by the intersections of the slanted hashing lines, of slope $1/\varepsilon$, with the relations $\tilde{\omega} = \tilde{\omega}^\pm (\tilde{\Omega})$, induced by the 2-scale characteristic equation; see lemma~\ref{lem:2.4} and \eqref{eq:3.40}.
Bottom row: the resulting control parameters $B=B_{0,j}^\pm = B^\pm (\tilde{\Omega}_{0,j}^\pm)$, also induced by the 2-scale characteristic equation according to lemma~\ref{lem:2.4}.
Solid dots $\bullet$ indicate transverse Hopf bifurcations, where the Hopf pair $\mu = \pm i\omega$ crosses towards the stable side for decreasing $|B|$, see lemma~\ref{lem:2.4}(iv).
Note the zero real eigenvalue $\square$ at ``Hopf'' frequency $\tilde{\omega} =0$, for $B=(-1)^k$.
Also note the non-crossing trivial Hopf pair $\blacksquare$ at $\mu= \pm i\omega_k$, which terminates the curves $B^-(\tilde{\Omega})$ at $\tilde{\Omega} = \varepsilon\omega_k=1$.
}}
\label{fig:3.1}
\end{figure}
%\hfill

The Hopf points $B\in \mathbb{R}$, where purely imaginary eigenvalues $\mu =i\tilde{\omega}>0$ arise, are therefore defined by the system of the 2-scale characteristic equation \eqref{eq:2.21} and the hashing \eqref{eq:2.22}, in the precise sense of lemma~\ref{lem:2.3}.
For the moment we ``forget'' hashing and address the complex 2-scale equation \eqref{eq:2.21} first, in its own right.
See also the bottom rows of figs.~\ref{fig:3.1} and \ref{fig:3.2}.

\begin{lem}
The 2-scale characteristic equation $\chi_0(\delta, \omega, \Omega, B)=0$ for purely imaginary eigenvalues, i.e. equation \eqref{eq:1.26}, \eqref{eq:2.21}, is equivalent to the system
	\begin{equation}
	\begin{aligned}
	0
	&=H(\delta, \Omega,\omega)
	:=\tilde{\Omega}
	\sin (\tfrac{\pi}{2}\Omega)-(-1)^k
	\cos(\omega-\tfrac{\pi}{2}\Omega)=\\
	&=(-1)^{m+1}(\tilde{\Omega}\cos (\tfrac{\pi}{2}\tilde{\Omega})
	-(-1)^k\sin(\omega-\tfrac{\pi}{2}\tilde{\Omega}))
	\end{aligned} \label{eq:2.27}\end{equation}\\[-1cm]
	\begin{equation}
	B
	= (-1)^k \sin^2(\tfrac{\pi}{2}\Omega)/\cos \omega 
	= (-1)^k \cos^2(\tfrac{\pi}{2}\tilde{\Omega})/\cos \omega\hphantom{-}
	\label{eq:2.28}
	\end{equation}
with parameter $\delta =1/(2m+1),\ m\in \mathbb{N}_0$.
As always, the case of even $k$ results from odd $k$ by addition of $\pi$ to $\omega\pmod{2\pi}$.
Here we have also used the previous notation\\[-0.5cm]
	\begin{equation}
	\tilde{\Omega} = \Omega +\Omega_m = \Omega+(2m+1)=\Omega +1/\delta\,;
	\label{eq:2.29}
	\end{equation}
see \eqref{eq:2.5}.
Eliminating $\omega \in (-\tfrac{\pi}{2},\ 3\tfrac{\pi}{2})$ we obtain the quadratic relation
	\begin{equation}
	0=Q(\delta,\Omega,B):= (\tilde{\Omega}^2-1)B^{2}
	+\tilde{\Omega} \sin (\pi\tilde{\Omega})B
	+\cos^2(\tfrac{\pi}{2}\tilde{\Omega})\,.
	\label{eq:2.30}
	\end{equation}
The discriminant $D$ of \eqref{eq:2.30} is given by
	\begin{equation}
	D= \cos^2(\tfrac{\pi}{2}\tilde{\Omega})
	\cdot (1-(\tilde{\Omega} \cos (\tfrac{\pi}{2}\tilde{\Omega}))^2)\,,
	\label{eq:2.31}
	\end{equation}
and the explicit solutions $B=B^\pm$ of \eqref{eq:2.30} are
	\begin{equation}
	B^\pm = (\tilde{\Omega}^2-1)^{-1} 
	(-\tfrac{1}{2}\tilde{\Omega}\sin (\pi\tilde{\Omega})\pm \sqrt{D})\,.
	\label{eq:2.32}
	\end{equation}
For $m\in \mathbb{N}$ and $\Omega_m=2m+1$, let ${\tilde{\underline{\Omega}}}_m\in (2m,\Omega_{m})$ and $\tilde{\Omega}_m^{\max} \in (\Omega_m, 2m+2)$ denote the unique solutions $\tilde{\Omega}$ of $D=0$, i.e. of
	\begin{equation}
	\tilde{\Omega} \cdot (-1)^m\cos (\tfrac{\pi}{2}\tilde{\Omega})=1\,,
	\label{eq:2.34}
	\end{equation}
in the respective intervals.
In terms of $\Omega = \tilde{\Omega}-\Omega_m$ and $0<\delta = \Omega_m^{-1}$ this defines unique solution branches $(\Omega, B^\pm)$ of \eqref{eq:2.30} with
	\begin{equation}
	\begin{aligned}
	B^+<0&<B^-\,,\qquad &\mathrm{for}\ 
	0=:\tilde{\underline{\Omega}}_0<\tilde{\Omega}<&\ \Omega_0=1\,, &m=0;\\
	B^\pm<0&\,,\qquad &\mathrm{for}\ \quad \ \
	\tilde{\underline{\Omega}}_m<\tilde{\Omega}<&\ \Omega_m\,, &m\geq 1\,;\\
	0&<B^\pm\,,\qquad \quad &\mathrm{for}\ \quad \ \
	\Omega_m<\tilde{\Omega}<&\ \tilde{\Omega}_m^{\max}\,, &m\geq 1\,.
	\end{aligned}
	\label{eq:2.33}
	\end{equation}
\label{lem:2.4}
\end{lem}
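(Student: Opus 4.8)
The plan is to read the single complex equation $\chi_0=0$ of \eqref{eq:2.8} as two real equations, and to extract the three assertions of the lemma — the $(H,B)$-system, the quadratic loop, and the discriminant analysis — in turn. First I would expand, using $ie^{i\omega}=-\sin\omega+i\cos\omega$ and $e^{i\frac{\pi}{2}\Omega}=\cos(\tfrac{\pi}{2}\Omega)+i\sin(\tfrac{\pi}{2}\Omega)$, and separate real and imaginary parts of $\chi_0$. The imaginary part reads $(-1)^k\cos\omega=\tfrac{1}{B}\sin^2(\tfrac{\pi}{2}\Omega)$, which solved for $B$ is exactly \eqref{eq:2.28} (using $1/(-1)^k=(-1)^k$), while the real part reads $\tilde\Omega-(-1)^k\sin\omega=\tfrac{1}{B}\sin(\tfrac{\pi}{2}\Omega)\cos(\tfrac{\pi}{2}\Omega)$. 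Substituting $1/B$ from the imaginary equation into the real one and multiplying by $\sin(\tfrac{\pi}{2}\Omega)$ turns the right-hand side into $(-1)^k\cos\omega\cos(\tfrac{\pi}{2}\Omega)$, so that the cosine-difference identity $\sin\omega\sin(\tfrac{\pi}{2}\Omega)+\cos\omega\cos(\tfrac{\pi}{2}\Omega)=\cos(\omega-\tfrac{\pi}{2}\Omega)$ collapses it into $H=0$, as in \eqref{eq:2.27}. The equivalent $\tilde\Omega$-forms of $H$ and of $B$ then follow mechanically from the identity \eqref{eq:derivation}, namely $\sin(\tfrac{\pi}{2}\Omega)=-(-1)^m\cos(\tfrac{\pi}{2}\tilde\Omega)$, together with the shift $\tfrac{\pi}{2}\Omega=\tfrac{\pi}{2}\tilde\Omega-\pi m-\tfrac{\pi}{2}$.

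Next I would eliminate $\omega$. Reading the two real equations as expressions for $\cos\omega$ and $\sin\omega$ in terms of $(\Omega,B)$, I impose $\cos^2\omega+\sin^2\omega=1$. After multiplying through by $B^2$, the two terms carrying $\sin^4(\tfrac{\pi}{2}\Omega)$ and $\sin^2(\tfrac{\pi}{2}\Omega)\cos^2(\tfrac{\pi}{2}\Omega)$ combine, via $\sin^2+\cos^2=1$, into the single constant $\sin^2(\tfrac{\pi}{2}\Omega)$, and the cross term produces $2\sin(\tfrac{\pi}{2}\Omega)\cos(\tfrac{\pi}{2}\Omega)=\sin(\pi\Omega)=-\sin(\pi\tilde\Omega)$. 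This yields precisely the quadratic $Q=0$ of \eqref{eq:2.30}. The quadratic formula then gives \eqref{eq:2.32}, where the quantity $D$ under the root equals one quarter of the usual $b^2-4ac$; expanding $\sin^2(\pi\tilde\Omega)=4\sin^2(\tfrac{\pi}{2}\tilde\Omega)\cos^2(\tfrac{\pi}{2}\tilde\Omega)$ and substituting $\sin^2=1-\cos^2$ telescopes $\tfrac{1}{4}b^2-ac$ into the factored form $\cos^2(\tfrac{\pi}{2}\tilde\Omega)\bigl(1-(\tilde\Omega\cos(\tfrac{\pi}{2}\tilde\Omega))^2\bigr)$ of \eqref{eq:2.31}.

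For the discriminant zeros and the sign table \eqref{eq:2.33}, note that away from the odd integers $\tilde\Omega=\Omega_m$ (where $\cos(\tfrac{\pi}{2}\tilde\Omega)=0$), \eqref{eq:2.31} shows $D=0$ iff $(\tilde\Omega\cos(\tfrac{\pi}{2}\tilde\Omega))^2=1$. Writing $\tilde\Omega=2m+t$ reduces this, on $(2m,\Omega_m)$, to $(2m+t)\cos(\tfrac{\pi}{2}t)=1$ and, on $(\Omega_m,2m+2)$, to $(2m+1+t)\sin(\tfrac{\pi}{2}t)=1$. A short derivative computation shows each left-hand side meets the level $1$ exactly once for $t\in(0,1)$ — the second is strictly increasing, and the first is unimodal while already exceeding $1$ at its left endpoint — which gives the unique $\tilde{\underline{\Omega}}_m$ and $\tilde\Omega_m^{\max}$ for $m\in\mathbb{N}$ (for $m=0$ the lower endpoint $\tilde{\underline{\Omega}}_0=0$ is set by definition). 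The same monotonicity shows $(\tilde\Omega\cos(\tfrac{\pi}{2}\tilde\Omega))^2<1$, hence $D>0$, throughout the open subintervals, so both roots are real there. Finally the signs in \eqref{eq:2.33} come from Vieta on \eqref{eq:2.30}: the product of roots is $\cos^2(\tfrac{\pi}{2}\tilde\Omega)/(\tilde\Omega^2-1)$ and their sum is $-\tilde\Omega\sin(\pi\tilde\Omega)/(\tilde\Omega^2-1)$. For $m=0$ one has $\tilde\Omega^2-1<0$, so the product is negative and the roots straddle $0$, giving $B^+<0<B^-$; for $m\ge1$ one has $\tilde\Omega^2-1>0$, the product is positive, and the common sign of the roots equals that of $-\sin(\pi\tilde\Omega)$, which is negative on $(2m,\Omega_m)$ and positive on $(\Omega_m,2m+2)$.

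The analysis itself is elementary; the genuine difficulty is bookkeeping. Keeping the two parities $(-1)^k$ and $(-1)^m$, and the half-integer shift between $\Omega$ and $\tilde\Omega$, mutually consistent across the equivalent forms of $H$, $B$, $Q$ and $D$ is the most error-prone point: in particular, in the notation of \eqref{eq:2.34}, the first discriminant zero satisfies $(-1)^m\tilde\Omega\cos(\tfrac{\pi}{2}\tilde\Omega)=1$ while the second satisfies the opposite sign, both encoded by $(\tilde\Omega\cos(\tfrac{\pi}{2}\tilde\Omega))^2=1$. The second delicate step is the \emph{uniqueness} of these discriminant zeros — establishing that the reduced functions meet the level $1$ exactly once requires the unimodality and monotonicity statements above, rather than mere existence by the intermediate value theorem.
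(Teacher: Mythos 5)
Your proof is correct and is essentially the paper's own argument: splitting $\chi_0=0$ into two real equations (the paper obtains \eqref{eq:2.28} by taking real parts of $i\chi_0=0$ directly, and \eqref{eq:2.27} by taking real parts after multiplying by $e^{-i\frac{\pi}{2}\Omega}$, which is the same algebra as your substitution of $1/B$), then eliminating $\omega$ via $\cos^2\omega+\sin^2\omega=1$ (identical to the paper's squaring of the modulus of the equation solved for $e^{i\omega}$), and finally analyzing the quadratic, which the paper dispatches as ``plain high school calculus.'' Your only additions are the details the paper leaves to the reader --- the unimodality/monotonicity argument for uniqueness of the discriminant zeros, the Vieta sign analysis for \eqref{eq:2.33}, and the correct observation that $\tilde\Omega_m^{\max}$ satisfies \eqref{eq:2.34} only up to sign, i.e.\ properly as a zero of $D$ --- all of which are accurate.
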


\begin{figure}[t!]
\centering \includegraphics[width= 1 \textwidth]{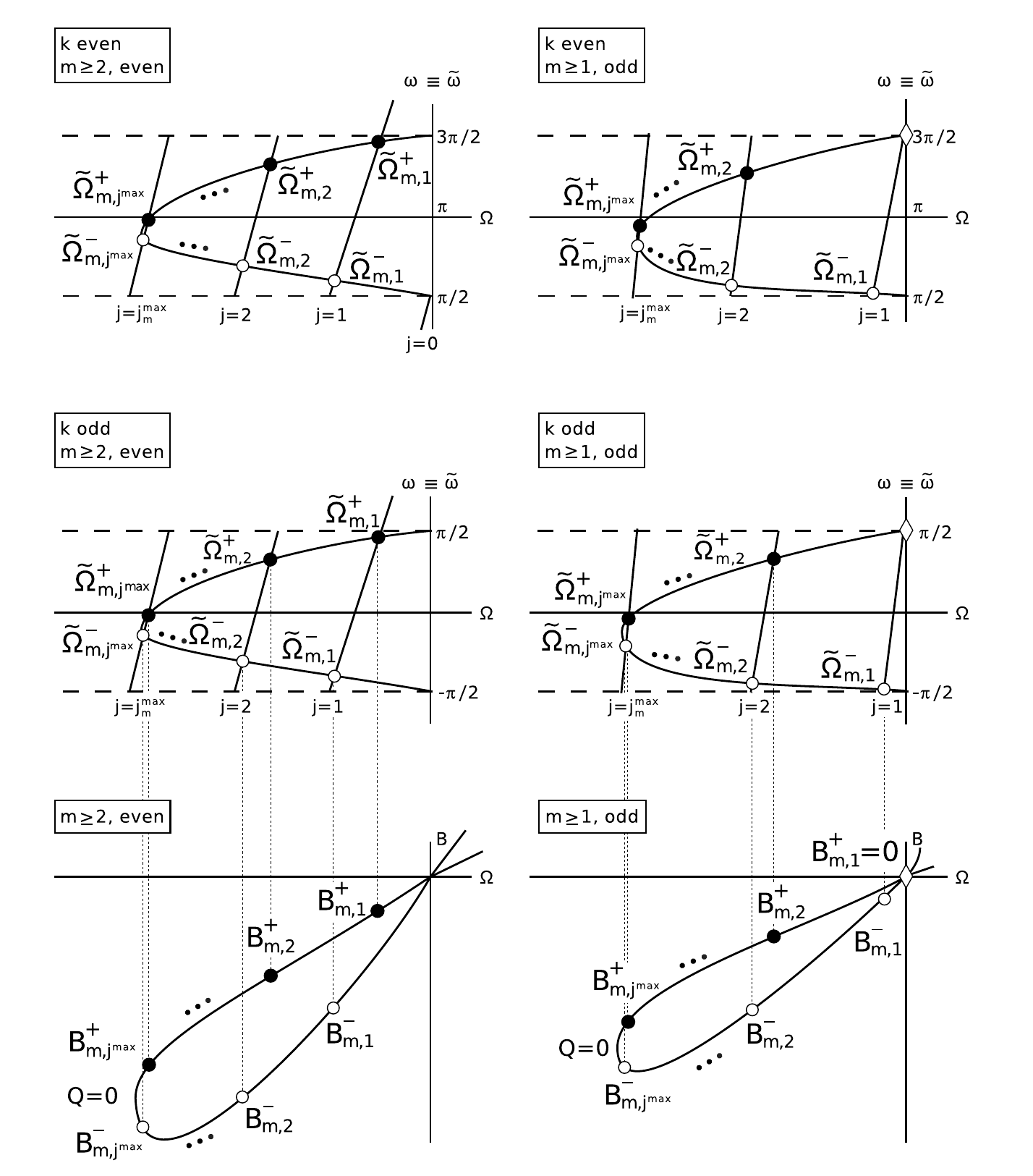}
\caption{\emph{
Purely imaginary eigenvalues $\mu = i\,\tilde{\omega} = i\,\tilde{\omega}_{m,j}^\pm$, two top rows, and Hopf control parameters $B= B_{m,j}^\pm <0$, bottom rows, at $\varepsilon= \omega_k^{-1} = ((k+\tfrac{1}{2})\pi)^{-1}$.
The horizontal axis is $-1<\underline{\Omega}_m \leq \Omega = \tilde{\Omega}-\Omega_m\leq0$ with $\Omega_m=2m+1$.
Left: even $m$.
Right: odd $m$.
Layout and legends as in figure~\ref{fig:3.1}.
Again, solid dots $\bullet$ indicate transverse Hopf stabilization towards smaller control parameters $|B|$, i.e. towards larger $B <0$, at $B_{m,j}^+$.
Circles $\circ$, in contrast, indicate transverse Hopf destabilization towards the same side, at $B_{m,j}^-$.
Note how destabilization by each $B_{m,j}^- <0$ is annihilated when $B<0$ increases through the subsequent stabilization at $B_{m,j}^+ <0$.
See theorem~\ref{thm:3.4}(iv). 
Only for odd $m$ and $j=1$, the subsequent stabilization at $B_{m,1}^+=0, \ \diamond$, fails to occur at any finite control amplitude $\beta = 1/b <0$.
}}
\label{fig:3.2}
\end{figure}

\begin{proof}[\textbf{Proof.}]
For vanishing real part $\mu_R=0$, the 2-scale characteristic equation \eqref{eq:2.4} simplifies to
	\begin{equation}
	0=i \chi_0(\delta,\omega,\Omega,B)
	= i\tilde{\Omega}-(-1)^k e^{i\omega}
	-\tfrac{i}{B}\sin(\tfrac{\pi}{2}\Omega) e^{i\tfrac{\pi}{2}\Omega}\,;
	\label{eq:2.35}
	\end{equation}
see \eqref{eq:2.8}, \eqref{eq:2.21}.
To prove \eqref{eq:2.27}, we multiply by $\exp(-i\tfrac{\pi}{2}\Omega)$ and take real parts.
To prove \eqref{eq:2.28} we take real parts directly.
To eliminate $\omega$, as in \eqref{eq:2.30}, we solve $\chi_0=0$ in \eqref{eq:2.35} for the only term $\exp (i\omega)$ which contains $\omega$, and calculate the square of the absolute values of both sides.
The remaining claims \eqref{eq:2.31} -- \eqref{eq:2.33} concerning the quadratic relation \eqref{eq:2.30} are plain high school calculus.
This proves the lemma.
\end{proof}

In the following analysis we will skip the third case $m\geq 1,\ B^\pm>0,\ \Omega_m<\tilde{\Omega}<\tilde{\Omega}_m^{\max}$ of \eqref{eq:2.33} which is completely analogous to the second case $\underline{\tilde{\Omega}}_m<\tilde{\Omega}<\Omega_m,\ B^\pm <0$.
Indeed that third case will turn out to be irrelevant anyway, in section~\ref{sec:3}; see corollary \ref{cor:3.6}.

\begin{lem}
The 2-scale relation \eqref{eq:2.27} between slow and fast frequencies $\Omega$ and $\omega$ can be solved for $\Omega = \Omega(\delta,\omega)$, implicitly, and for the inverse function $\omega = \omega(\delta,\Omega)$, explicitly:
	\begin{equation}
	\omega = \omega^\pm:\equiv \tfrac{\pi}{2}\Omega \pm 
	\arccos (-\tilde{\Omega}\sin (\tfrac{\pi}{2}\Omega)) \quad
	\pmod{2\pi}\,,
	\label{eq:2.36}
	\end{equation}
for $k$ odd.
Even $k$ require addition of $\pi \pmod{2\pi}$.

For $m=0$, where $ \delta=1$ and $0<\tilde{\Omega} = \Omega +1<1$, both functions $\omega^\pm$ have strictly positive and bounded derivatives with respect to $\tilde{\Omega}$ or $\Omega$, equivalently, in the interior domain.
For odd $k$, their boundary values and ranges are, accordingly,
	\begin{equation}
	\begin{aligned}
	\omega^+ \in [0,\tfrac{1}{2}\pi]\,,
	\quad &\mathrm{with} \ \omega^+=0, \tfrac{1}{2}\pi \ \mathrm{at}\
	\tilde{\Omega} =0,1\,;\\
	\omega^- \in [\pi,\tfrac{3}{2}\pi]\,,
	\quad &\mathrm{with} \ \omega^-=\pi, \tfrac{3}{2}\pi \ \mathrm{at}\
	\tilde{\Omega} =0,1\,.
	\end{aligned}
	\label{eq:2.37}
	\end{equation}
The ranges and boundary values are interchanged for even $k$.
See the top row of fig.~\ref{fig:3.1}.

Let $m\geq 1$, where $0<\delta = 1/(2m+1)\leq 1/3$, and consider $\underline{\tilde{\Omega}}_m \leq \tilde{\Omega}\leq 2m+1 = \Omega_m$; see \eqref{eq:2.34}.
Then we observe ranges
	\begin{equation}
	\begin{aligned}
	\omega \in [-\tfrac{\pi}{2},\tfrac{\pi}{2}]\,,
	\quad &\mathrm{for}\ \mathrm{odd} \ k\geq 1\,,\\
	\omega \in [\tfrac{\pi}{2},\tfrac{3}{2}\pi]\,,
	\quad &\mathrm{for}\ \mathrm{even} \ k\geq 2\,.
	\end{aligned}
	\label{eq:2.38}
	\end{equation}
Moreover, the implicit inverse function $\Omega = \Omega(\delta,\omega)$ is strictly piecewise monotone in $\omega$ with unique local and global minimum
	\begin{equation}
	-1<\underline{\tilde{\Omega}}_m -\Omega_m =:
	\underline{\Omega}_m \leq \Omega \leq 0
	\label{eq:2.39}
	\end{equation}
and boundary values $\Omega =0$ at $\omega\equiv \pm \tfrac{\pi}{2}$.
The minimal value $\underline{\Omega}_m$ occurs at
	\begin{equation}
	\begin{aligned}
	\omega = \underline{\omega} &= \tfrac{\pi}{2} \underline{\Omega}_m
	\quad &\mathrm{for}\ &k\ \mathrm{odd}\,,\\
	\omega = \underline{\omega} &= \pi +\tfrac{\pi}{2} \underline{\Omega}_m
	\quad &\mathrm{for}\ &k\ \mathrm{even}\,.
	\end{aligned}
	\label{eq:2.40}
	\end{equation}
The two explicit branches $\omega = \omega^\pm(\delta,\Omega)$  possess
strictly nonzero, but only locally bounded, derivatives with respect to $\tilde{\Omega}$ or $\Omega$, equivalently, in the domain 
$\underline{\Omega}_m < \Omega \leq 0$. 
They
merge at $\Omega=\underline{\Omega}_m$, where the discriminant of the quadratic $B$-relation \eqref{eq:2.30} vanishes.
See the two upper rows of fig.~\ref{fig:3.2}.
\label{lem:2.5}
\end{lem}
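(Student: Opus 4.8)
The plan is to treat the real equation \eqref{eq:2.27} as a single scalar condition on $\cos(\omega-\tfrac{\pi}{2}\Omega)$ and invert it by $\arccos$. First I would rewrite $H=0$ for odd $k$ as $\cos(\omega-\tfrac{\pi}{2}\Omega)=g$, abbreviating $g:=-\tilde{\Omega}\sin(\tfrac{\pi}{2}\Omega)=(-1)^{m}\tilde{\Omega}\cos(\tfrac{\pi}{2}\tilde{\Omega})$, where the second identity is \eqref{eq:derivation}. Solving for $\omega$ gives the two explicit branches $\omega^{\pm}=\tfrac{\pi}{2}\Omega\pm\arccos(g)$ of \eqref{eq:2.36}, and the even-$k$ case follows verbatim after the shift $\omega\mapsto\omega+\pi$, which turns $\cos(\omega-\tfrac{\pi}{2}\Omega)$ into its negative and absorbs the factor $(-1)^{k}$. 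The branches are real exactly when $|g|\le 1$, i.e. $|\tilde{\Omega}\cos(\tfrac{\pi}{2}\tilde{\Omega})|\le 1$; by \eqref{eq:2.31} this is precisely the sign condition $D\ge 0$, so the common domain of $\omega^{\pm}$ is the $\tilde{\Omega}$-interval between the roots of \eqref{eq:2.34}, namely $(0,1)$ for $m=0$ and $(\underline{\tilde{\Omega}}_m,\Omega_m)$ for $m\ge 1$. The two branches coincide exactly where $g=1$, i.e. at the left endpoint $\underline{\tilde{\Omega}}_m$ where $D=0$; via $\arccos(1)=0$ this already pins the merge point $\Omega=\underline{\Omega}_m=\underline{\tilde{\Omega}}_m-\Omega_m$ and the merge value $\underline{\omega}=\tfrac{\pi}{2}\underline{\Omega}_m$ of \eqref{eq:2.40}.

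Next I would read off the boundary data by substitution. At $\Omega=0$ one has $g=0$ and $\arccos(0)=\tfrac{\pi}{2}$, hence $\omega^{\pm}=\pm\tfrac{\pi}{2}$. Conversely, inserting $\omega=\pm\tfrac{\pi}{2}$ into $H=0$ and using $\cos(\pm\tfrac{\pi}{2}-\tfrac{\pi}{2}\Omega)=\pm\sin(\tfrac{\pi}{2}\Omega)$ gives $(\tilde{\Omega}\pm 1)\sin(\tfrac{\pi}{2}\Omega)=0$; since $\tilde{\Omega}>1$ for $m\ge 1$ both coefficients are nonzero, forcing $\Omega=0$. This establishes the boundary values $\Omega=0$ at $\omega=\pm\tfrac{\pi}{2}$. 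Together with $\underline{\Omega}_m\in(-1,0)$ from \eqref{eq:2.34} it yields the ranges \eqref{eq:2.37}, \eqref{eq:2.38} and the two-sided range \eqref{eq:2.39} of $\Omega$; for $m=0$ only $\omega^{+}=\tfrac{\pi}{2}$ is attained, consistent with the disjoint ranges of $\omega^{\pm}$ there.

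The analytic core is monotonicity. Implicit differentiation of \eqref{eq:2.27}, or direct differentiation of \eqref{eq:2.36}, gives the branch slopes
\[
s_\pm:=\frac{d\omega^{\pm}}{d\Omega}=\frac{\pi}{2}\mp\frac{g'}{\sqrt{1-g^{2}}},
\]
so that $s_{+}+s_{-}=\pi$ and $s_{+}s_{-}=P/(1-g^{2})$ with $P:=(\tfrac{\pi}{2})^{2}(1-g^{2})-(g')^{2}$. Since $1-g^{2}>0$ in the interior, the sign of $s_{+}s_{-}$ equals the sign of $P$, and the entire monotonicity claim reduces to the sign of the single function $P$. The boundedness for $m=0$ and the local boundedness with blow-up at $\underline{\Omega}_m$ for $m\ge 1$ then come for free: $s_\pm$ stay finite wherever $g$ is bounded away from $1$, and $s_\pm\to\mp\infty$ as $g\to 1$ at the merge. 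A short trigonometric reduction using $g=(-1)^m\tilde{\Omega}\cos(\tfrac{\pi}{2}\tilde{\Omega})$ collapses $P$ to the $m$-independent form
\[
P(\tilde{\Omega})=(\tfrac{\pi}{2})^{2}(1-\tilde{\Omega}^{2})-\tfrac{1}{2}(1+\cos\pi\tilde{\Omega})+\tfrac{\pi}{2}\tilde{\Omega}\sin\pi\tilde{\Omega}.
\]

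The one genuinely delicate point, and the step I expect to cost the most care, is the sign of $P$ on the $m=0$ interval $(0,1)$: here $P(0)=(\tfrac{\pi}{2})^{2}-1>0$ but $P(1)=0$, so crude bounds are too lossy near $\tilde{\Omega}=1$. I would substitute $\tilde{\Omega}=1-\eta$, $\eta\in(0,1)$, rewrite $P=(\tfrac{\pi}{2})^{2}\eta(2-\eta)-\sin^{2}(\tfrac{\pi}{2}\eta)+\tfrac{\pi}{2}(1-\eta)\sin\pi\eta$, and then use the elementary inequality $\sin^{2}(\tfrac{\pi}{2}\eta)\le(\tfrac{\pi}{2}\eta)^{2}$ together with nonnegativity of the last term to obtain $P\ge 2(\tfrac{\pi}{2})^{2}\eta(1-\eta)>0$. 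Hence $s_{+}s_{-}>0$; combined with $s_{+}+s_{-}=\pi>0$ this forces both slopes strictly positive in the interior, which is \eqref{eq:2.37}. For $m\ge 1$ the domain lies in $\tilde{\Omega}>2$, where dropping the nonpositive term $-\tfrac{1}{2}(1+\cos\pi\tilde{\Omega})$ and bounding $\sin\pi\tilde{\Omega}\le 1$ leaves the downward parabola $P\le-(\tfrac{\pi}{2})^{2}\tilde{\Omega}^{2}+\tfrac{\pi}{2}\tilde{\Omega}+(\tfrac{\pi}{2})^{2}$, already negative for $\tilde{\Omega}>1.37\ldots$; thus $P<0$, the two slopes have opposite signs and never vanish, and the implicit inverse $\Omega=\Omega(\delta,\omega)$ is strictly piecewise monotone with its unique interior critical point at $\underline{\Omega}_m$, as in \eqref{eq:2.39}--\eqref{eq:2.40}. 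All even-$k$ assertions follow throughout by the shift $\omega\mapsto\omega+\pi$.
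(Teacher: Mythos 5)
Your proof is correct, but its analytic core takes a genuinely different route from the paper's. The paper never differentiates the explicit branches $\omega^\pm(\Omega)$: it differentiates the implicit relation $H(\Omega,\omega)=0$ of \eqref{eq:2.27} and observes that any critical point of $\Omega(\omega)$ forces $H_\omega=-\sin(\omega-\tfrac{\pi}{2}\Omega)=0$, whence $H=0$ yields $\cos(\omega-\tfrac{\pi}{2}\Omega)=\pm1$ and $\tilde{\Omega}\,|\cos(\tfrac{\pi}{2}\tilde{\Omega})|=1$; this is exactly the discriminant condition \eqref{eq:2.34}, impossible for $m=0$ where $0\le\tilde{\Omega}\le1$, and attained only at $\tilde{\Omega}=\underline{\tilde{\Omega}}_m$ for $m\ge1$. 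That argument is purely qualitative and needs no estimates. You instead compute the branch slopes $s_\pm$ directly and exploit the Vieta-type identities $s_++s_-=\pi$ and $s_+s_-=P/(1-g^2)$, with the $m$-independent function $P(\tilde{\Omega})=(\tfrac{\pi}{2})^2(1-\tilde{\Omega}^2)-\tfrac{1}{2}(1+\cos\pi\tilde{\Omega})+\tfrac{\pi}{2}\tilde{\Omega}\sin\pi\tilde{\Omega}$ (your trigonometric reduction is correct), and then settle the sign of $P$: positive on $(0,1)$ via $\tilde{\Omega}=1-\eta$ and $\sin^2(\tfrac{\pi}{2}\eta)\le(\tfrac{\pi}{2}\eta)^2$, negative for $\tilde{\Omega}\ge2$ via the parabola bound. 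Your sign analysis near the degenerate corner $\tilde{\Omega}=1$ is precisely the delicate step that the paper's critical-point argument bypasses; in exchange you gain quantitative control of the slopes, which makes the dichotomy between $m=0$ (both slopes positive) and $m\ge1$ (opposite signs, hence the unique minimum at the merge) completely transparent, and which is in the spirit of the uniform derivative bounds the paper invokes later, e.g.\ in \eqref{eq:3.53}. Two harmless slips: at the merge one has $s_\pm\to\pm\infty$, not $\mp\infty$, since $g$ decreases from its maximal value $1$ as $\Omega$ increases from $\underline{\Omega}_m$, so $g'\le0$ there; and for the $m=0$ boundedness claim you should note explicitly that $g=\tilde{\Omega}\cos(\tfrac{\pi}{2}\tilde{\Omega})<1$ on the compact interval $0\le\tilde{\Omega}\le1$, so that $\sqrt{1-g^2}$ stays bounded below. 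Neither affects the validity of the argument.
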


\begin{proof}[\textbf{Proof.}]
As always, we may consider odd $k$, without loss.
The explicit solutions $\omega = \omega^\pm$ of \eqref{eq:2.36} follow directly from the first line of the 2-scale equation \eqref{eq:2.27}.
Recall that nonnegative discriminants $D$ in \eqref{eq:2.31}, \eqref{eq:2.32} require $|\tilde{\Omega} \sin (\tfrac{\pi}{2} \Omega)|=|\tilde{\Omega} \cos (\tfrac{\pi}{2} \tilde{\Omega})|\leq1$; see also \eqref{eq:derivation}.
Hence $-1\leq \Omega\leq0$ implies $-1\leq \tilde{\Omega}\sin (\tfrac{\pi}{2} \Omega)\leq 0$. 
In particular $\arccos(- \tilde{\Omega} \sin (\tfrac{\pi}{2} \Omega) )  \in [0,\tfrac{\pi}{2}]$, for all $m \in \mathbb{N}_{0}$.
This proves claim \eqref{eq:2.36} and the range claims \eqref{eq:2.37}, \eqref{eq:2.38}.
Moreover the functions $\omega=\omega^{\pm}(\Omega)$ are differentiable with bounded derivatives, except for the vertical tangent at the discriminant loci $\Omega=\underline{\Omega}_{m}$, $m \geq1$.

For the inverse function $\Omega=\Omega(\omega)$, we study the monotonicity  claims, for all $m$,  and the minimizer claims, for $m\geq1$, next.
Here we suppress $\delta$, for a while.
Recall $H(\Omega,\omega)=0$, from \eqref{eq:2.27}.
With the abbreviations
	\begin{equation}
	\begin{aligned}
	S&:=\sin(\tfrac{\pi}{2}\Omega) 
	= -(-1)^m\cos (\tfrac{\pi}{2}\tilde{\Omega})\,,\\
	C&:=\cos(\tfrac{\pi}{2}\Omega) 
	= (-1)^m\sin (\tfrac{\pi}{2}\tilde{\Omega})\,,\\
	s&:=\sin(\omega-\tfrac{\pi}{2}\Omega) \,,\qquad
	c:= \cos (\omega-\tfrac{\pi}{2}\Omega)\,,
	\end{aligned}
	\label{eq:2.42}
	\end{equation}
$H$ and its partial derivatives, for odd $k$, are
	\begin{equation}
	\begin{aligned}
	&H& &= \quad \tilde{\Omega}S+c\ ,\\
	&H_\Omega& &= \quad S+\tfrac{\pi}{2}\tilde{\Omega}C+\tfrac{\pi}{2}s\ ,\\
	&H_\omega& &= \quad -s \ .%\\
%	&H_{\Omega\Omega}& &= \quad \pi C-(\tfrac{\pi}{2})^2\tilde{\Omega}S-(\tfrac{\pi}{2})^2c\\
%	&H_{\Omega\omega}& &= \quad \tfrac{\pi}{2}c\\
%	&H_{\omega\omega}& &= \quad-c
	\end{aligned}
	\label{eq:2.43}
	\end{equation}
Elementary  arguments show that $H=0$ is a regular value of $H$.
Let $\dot{\Omega}$ denote the derivative of $\Omega(\omega)$.
By implicit differentiation of $H(\Omega, \omega)=0$ with respect to $\omega$, we obtain
\begin{equation}
	-H_\Omega \dot{\Omega} =  H_\omega \,.
	\label{eq:2.44} %\\
%	-H_\Omega \ddot{\Omega}&= H_{\Omega\Omega} \dot{\Omega}^2 +2H_{\Omega\omega}\dot{\Omega}+H_{\omega\omega}\label{eq:2.55}
	\end{equation}
Suppose $\dot{\Omega} =0$ at $\omega=\underline{\omega}$.
Then
	\begin{equation}
	0= H_\omega =-s=-\sin (\omega-\tfrac{\pi}{2}\Omega)
	\label{eq:2.57}
	\end{equation}
at $\Omega = \Omega(\underline{\omega})$ implies $\omega =\underline{\omega}\equiv \tfrac{\pi}{2}\Omega\pmod{\pi}$.
Insertion into $H(\Omega,\underline{\omega})=0$ implies
	\begin{equation}
	\pm 1= -c= \tilde{\Omega}S 
	=-(-1)^m\tilde{\Omega}\cos(\tfrac{\pi}{2}\tilde{\Omega})\,,
	\label{eq:2.58}
	\end{equation}
using \eqref{eq:2.42}.
Since $S<0<\tilde{\Omega}$, we obtain $c=+1$.
This implies $m\geq1$ and
	\begin{equation}
	\tilde{\Omega} = \underline{\tilde{\Omega}}_m
	\label{eq:2.59}
	\end{equation}
as defined in \eqref{eq:2.34}.
For $m=0$, in fact, $0\leq \tilde{\Omega}\leq 1$ prevents any solution of \eqref{eq:2.58}.
This proves the strong monotonicity claims for $m=0$, and completes the proof of lemma~\ref{lem:2.5}.
\end{proof}

\begin{lem}
The functions $B^\pm=B^\pm(\delta, \Omega)$ for the control parameter $B$ in \eqref{eq:2.28}, \eqref{eq:2.30},\eqref{eq:2.32} have the following properties.

For $m=0$, where $\delta=1$ and $0<\tilde{\Omega}= \Omega+1<1$, we have
	\begin{equation}
	B^\pm(\delta,\Omega)=(-1)^k
	\frac{\sin^2(\tfrac{\pi}{2}\Omega)}{\cos\omega^\pm(\delta, \Omega)}.
	\label{eq:2.61}
	\end{equation}
Here $\omega^\pm$ have been defined in \eqref{eq:2.36}.
Moreover $B^+$ increases strictly with respect to $\Omega$, or $\tilde{\Omega}$,
	\begin{equation}
	\begin{aligned}
	&B^+<0<B^-
	\quad &\mathrm{for}\quad &0<\tilde{\Omega}<1\,;\\
	&B^+=-1,0
	\quad &\mathrm{for}\quad &\tilde{\Omega}=0,1\,;\\
	&B^-=1,\,\pi/2
	\quad &\mathrm{for}\quad &\tilde{\Omega}=0,1\,.
	\end{aligned}
	\label{eq:2.62}
	\end{equation}
See the bottom row of fig.~\ref{fig:3.1}.

Let $m\geq 1$, where $0<\delta <1/(2m+1)\leq 1/3$ and $\underline{\tilde{\Omega}}_m\leq \tilde{\Omega} < 2m+1=\Omega_m$; see \eqref{eq:2.33}.
Then
	\begin{equation}
	\begin{aligned}
	&B^-<B^+<0 &\mathrm{for} \quad &\underline{\tilde{\Omega}}_m<\tilde{\Omega}< \Omega_m\,;\\
	&B^\pm = -\tfrac{1}{2}(\underline{\tilde{\Omega}}_m^2-1)^{-1}
	\underline{\Omega}_m\sin(\pi\underline{\tilde{\Omega}}_m)\,,
	\quad &\mathrm{for}\quad &\tilde{\Omega}
	=\underline{\tilde{\Omega}}_m\,;\\
	&B^\pm=0\,,
	\quad &\mathrm{for}\quad &\tilde{\Omega}= \Omega_m\,.
	\end{aligned}
	\label{eq:2.63}
	\end{equation}
In terms of \eqref{eq:2.40}, the branches $B^\pm$ are parametrized over $\omega$, instead of $\Omega$, as follows:
	\begin{equation}
	\begin{aligned}
	&B^+= B(\delta,\omega)\,,
	\quad &\mathrm{for}\quad &\omega >\underline{\omega}:=
	\tfrac{\pi}{2}\underline{\Omega}_m\,;\\
	&B^-= B(\delta,\omega)\,,
	\quad &\mathrm{for}\quad &\omega <\underline{\omega}:=
	\tfrac{\pi}{2}\underline{\Omega}_m\,.
	\end{aligned}
	\label{eq:2.64}
	\end{equation}
Moreover $B^+$ is strictly increasing with respect to $\omega$, or $\Omega$ alias $\tilde{\Omega}$.

For $B^-$ and $m\geq 1$ we encounter a zero derivative $\partial_\omega B^-$ at $\omega= \omega_*,\ \Omega= \Omega_*$ if, and only if,
	\begin{equation}
	0=d(\Omega,\omega):=\sin^2(\tfrac{\pi}{2}\Omega)\sin\omega+
	\tfrac{\pi}{2}\cos\omega\sin(\omega-\pi\Omega)\,.
	\label{eq:2.64a}
	\end{equation}
Critical points, and in particular the minimum, of $\omega \mapsto B$ (and of $B^-$) occur at certain $\omega= \omega_*,\ \Omega=\Omega_*$ which satisfy
	\begin{equation}
	\pi\Omega_{*} < \omega_{*} < \tfrac{1}{2}\pi\Omega_{*} <0\,.
	\label{eq:2.64b}
	\end{equation}
In particular $\partial_\omega B^-<0$ for $-\tfrac{\pi}{2}<\omega\leq\pi\Omega_{*}$.
Moreover $B>-1$.
\label{lem:2.6}
\end{lem}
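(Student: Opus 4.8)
The plan is to work throughout with odd $k$, since the lemma reduces even $k$ to odd $k$ by $\omega\mapsto\omega+\pi\pmod{2\pi}$, and to separate the purely algebraic claims (closed forms, signs, boundary values) from the genuinely delicate monotonicity of $B^-$. First I would settle the elementary claims. Formula \eqref{eq:2.61} is the substitution of the explicit branches $\omega^\pm$ of \eqref{eq:2.36} into \eqref{eq:2.28}. The sign patterns in \eqref{eq:2.62} and the first line of \eqref{eq:2.63} then follow by reading $\cos\omega^\pm$ off the ranges \eqref{eq:2.37}, \eqref{eq:2.38} from Lemma~\ref{lem:2.5}: for $m=0$ one has $\cos\omega^+\ge0\ge\cos\omega^-$, whereas for $m\ge1$ one has $\omega\in[-\tfrac{\pi}{2},\tfrac{\pi}{2}]$, hence $\cos\omega\ge0$ and $B\le0$; the ordering $B^-<B^+$ is immediate from \eqref{eq:2.32}, since $\tilde\Omega^2-1>0$ makes the $+\sqrt D$ root the larger. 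The boundary values at $\tilde\Omega=\underline{\tilde\Omega}_m$ (where $D=0$, so $B^+=B^-$) and at $\tilde\Omega=\Omega_m$ (where $\cos^2(\tfrac{\pi}{2}\tilde\Omega)=\sin(\pi\tilde\Omega)=0$ force $B=0$) are read directly off \eqref{eq:2.30}, \eqref{eq:2.32}. The one nontrivial value is $B^-=\pi/2$ at $\tilde\Omega=1$ in \eqref{eq:2.62}, where \eqref{eq:2.28} is of the form $0/0$; here I would instead set $\tilde\Omega=1+\eta$ in \eqref{eq:2.30}, divide by $\eta$, and let $\eta\to0$ to obtain $2B^2-\pi B=0$, whence $B^+\to0$ and $B^-\to\pi/2$.

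Next I would pin down the reparametrization \eqref{eq:2.64}, i.e. match the labels $\omega^\pm$ of \eqref{eq:2.36} to the labels $B^\pm$ of \eqref{eq:2.32}. With $S:=\sin(\tfrac{\pi}{2}\Omega)<0$, $C:=\cos(\tfrac{\pi}{2}\Omega)>0$ and $\alpha:=\arccos(-\tilde\Omega S)\in[0,\tfrac{\pi}{2}]$, the addition formula gives $\cos\omega^\pm=(-S)\bigl(\tilde\Omega C\pm\sqrt{1-\tilde\Omega^2S^2}\bigr)$; since $\tilde\Omega^2>1$ yields $\tilde\Omega C>\sqrt{1-\tilde\Omega^2S^2}$, both cosines are positive with $\cos\omega^+>\cos\omega^-$, so $B(\omega^+)>B(\omega^-)$, i.e. $\omega^+\leftrightarrow B^+$; since Lemma~\ref{lem:2.5} places $\omega^+$ on the side $\omega\ge\underline\omega$ and $\omega^-$ on the side $\omega\le\underline\omega$ of the minimizer $\underline\omega=\tfrac{\pi}{2}\underline\Omega_m$, this is exactly \eqref{eq:2.64}.

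The technical core is the derivative. Differentiating $B=(-1)^kS^2/\cos\omega$ along the constraint $H(\Omega,\omega)=0$ of \eqref{eq:2.27}, inserting $\dot\Omega=s/H_\Omega$ from \eqref{eq:2.44} with $s:=\sin(\omega-\tfrac{\pi}{2}\Omega)$, $c:=\cos(\omega-\tfrac{\pi}{2}\Omega)$, and clearing the nonzero factor $H_\Omega$ reduces $\partial_\omega B=0$ to $\pi SCs\cos\omega+S^2\sin\omega\,H_\Omega=0$. Expanding $H_\Omega=S+\tfrac{\pi}{2}\tilde\Omega C+\tfrac{\pi}{2}s$ from \eqref{eq:2.43}, eliminating $\tilde\Omega S$ via $\tilde\Omega S=-c$ (which is $H=0$ for odd $k$), factoring out $S$, and collapsing the remainder by the identities $s\cos\omega-c\sin\omega=-S$ and $c\cos\omega+s\sin\omega=C$, everything cancels down to $d(\Omega,\omega)$ of \eqref{eq:2.64a}; the same computation shows $\operatorname{sign}\partial_\omega B=\operatorname{sign}d$, since $S<0$, $H_\Omega>0$ (read off from the signs of $\dot\Omega$ and $s$ on each branch) and $\cos^2\omega>0$. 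For $B^+$ I would then use $H=0$ to rewrite $d=S^2(\sin\omega+\tfrac{\pi}{2}\tilde\Omega\cos\omega)+\tfrac{\pi}{2}sC\cos\omega$; here $s\ge0$, $\cos\omega>0$ and $\tilde\Omega\ge2$ force $d>0$ (with a direct check at the merge handling the neighborhood where the first bracket is delicate), giving strict monotonicity, the $m=0$ case being identical with $\omega^+$ monotone in $\Omega$. For $B^-$, where $s<0$, one localizes the zeros of $d$: on this branch $\omega<\tfrac{\pi}{2}\Omega$ throughout (equality only at $\underline\omega$), giving the upper bound $\omega_*<\tfrac12\pi\Omega_*<0$; and whenever $\omega\le\pi\Omega$ both summands of $d$ are nonpositive ($\sin\omega<0$ and $\omega-\pi\Omega\in[-\tfrac{\pi}{2},0]$), so $d<0$ there, which yields $\partial_\omega B^-<0$ on $(-\tfrac{\pi}{2},\pi\Omega_*]$ and forces $\omega_*>\pi\Omega_*$. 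This is \eqref{eq:2.64b}. Finally, using $H=0$ to write $B^-=-|S|/(\tilde\Omega C+s)$, the bound $B>-1$ is equivalent to $\tilde\Omega^2-1>2\tilde\Omega C|S|-S^2$, which holds since $\tilde\Omega|S|\le1$, $C\le1$ and $\tilde\Omega^2-1>3$ for $m\ge1$.

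I expect the main obstacle to be exactly the cancellation producing the compact critical-point function $d$ in \eqref{eq:2.64a}: the raw $\partial_\omega B$ carries the denominator $H_\Omega$ together with several $\tilde\Omega$-dependent terms, and only after substituting $\tilde\Omega S=-c$ and applying the two angle-addition identities does the clean form emerge. The accompanying localization \eqref{eq:2.64b} is elementary trigonometry, but it must be run separately on the two branches $\omega\lessgtr\underline\omega$, keeping careful track of the constraint $\Omega=\Omega(\omega)$ and of the signs of $s$, $\dot\Omega$ and $H_\Omega$ at every step.
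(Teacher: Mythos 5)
Your proposal is correct, and its core is the same as the paper's: differentiate $B=(-1)^kS^2/\cos\omega$ implicitly along the 2-scale relation $H=0$, reduce everything to the sign of the critical-point function $d$ of \eqref{eq:2.64a}, and analyze $d$ branchwise using the substitution $\tilde\Omega S=-c$. The differences are in execution, and mostly to your advantage. You obtain \eqref{eq:2.64a} by a purely real trigonometric computation (your two addition identities indeed yield the factorization $\pi SCs\cos\omega+S^2\sin\omega\,H_\Omega=S\,d$), whereas the paper differentiates the complex equation $\chi_0=0$ and takes imaginary parts against the conjugate coefficient of $\dot\Omega$. You also retain the full sign identity $\operatorname{sign}\partial_\omega B=\operatorname{sign}d$ via $H_\Omega=s/\dot\Omega>0$ on the branches you use, so monotonicity of $B^+$ and the negativity of $\partial_\omega B^-$ for $\omega\leq\pi\Omega$ come out directly, where the paper splits into the cases $\sin\omega\lessgtr 0$ and argues by contradiction; and your computation $\cos\omega^\pm=(-S)\bigl(\tilde\Omega C\pm\sqrt{1-\tilde\Omega^2S^2}\bigr)$ makes the branch matching \eqref{eq:2.64} explicit, where the paper leaves it implicit in lemmata~\ref{lem:2.4}, \ref{lem:2.5}. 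One point you should spell out rather than wave at: on the $B^+$ branch with $\underline{\omega}<\omega<0$ (including the merge point, where $s=0$) you need $\sin\omega+\tfrac{\pi}{2}\tilde\Omega\cos\omega>0$; this follows since $\underline{\tilde\Omega}_m\,|\sin(\tfrac{\pi}{2}\underline{\Omega}_m)|=1$ and $|\sin(\tfrac{\pi}{2}x)|\geq|x|$ give $|\omega|<\tfrac{\pi}{2}|\underline{\Omega}_m|\leq\tfrac{\pi}{4}$, hence $\tan|\omega|<1<\tfrac{\pi}{2}\tilde\Omega$.

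The only actual error is in the final step. Your formula $B=-|S|/(\tilde\Omega C+s)$ is correct (by $H=0$ one has $\cos\omega=|S|(\tilde\Omega C+s)$), but $B>-1$ is then equivalent to $|S|+\sqrt{1-\tilde\Omega^2S^2}<\tilde\Omega C$, i.e., after squaring, to $\tilde\Omega^2-1>S^2+2|S|\sqrt{1-\tilde\Omega^2S^2}$ — not to your $\tilde\Omega^2-1>2\tilde\Omega C|S|-S^2$ (wrong sign on $S^2$, and $\tilde\Omega C$ where $|s|$ belongs). The slip is harmless: with $\tilde\Omega|S|\leq1$ and $|S|\leq1$ the corrected right-hand side is at most $3$, while $\tilde\Omega^2-1>3$ for $\tilde\Omega>2m\geq2$, so exactly the estimates you invoke close the argument. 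For comparison, the paper proves $B>-1$ indirectly: assuming $B^-=-1$ forces $\cos\omega=S^2$, whence $H/S=\tilde\Omega+CS+\sin\omega\geq\tilde\Omega-2>0$ contradicts $H=0$, combined with continuity from $B^\pm=0$ at $\tilde\Omega=\Omega_m$. Either route is sound once your inequality is restated correctly.
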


\begin{proof}[\textbf{Proof.}]
Again we consider odd $k$, without loss.
We suppress the parameter $\delta$ and differentiate \eqref{eq:2.28} with respect to $\omega$, implicitly, analogously to the proof of lemma~\ref{lem:2.5}:
	\begin{equation}
	\dot{B}= -
	(\tfrac{\pi}{2}\sin(\pi\Omega)\cos\omega\cdot
	\dot{\Omega}+\sin^2(\tfrac{\pi}{2}\Omega)\sin\omega)/\cos^2\omega\,.
	\label{eq:2.65}
	\end{equation}
We invoke lemma~\ref{lem:2.5}.

Consider $m=0$, first.
Then the sign of the right hand side of \eqref{eq:2.61} is
	\begin{equation}
	-\text{sign}\,\cos\omega^\pm = \mp 1
	\label{eq:2.66}
	\end{equation}
by \eqref{eq:2.37}.
These signs agree with our definition \eqref{eq:2.32} of $B^\pm$, for $m=0$, because $0<\tilde{\Omega}<1$ implies $B^+<B^-$, there.
More specifically, we have shown $B^+<0<B^-$.

To settle monotonicity of $B^+$, for all $m$, we keep considering odd $k$, without loss.
We aim to show $\dot{B}^{+}>0$ in the interior domain of definition.
We rewrite \eqref{eq:2.65} as
	\begin{equation}
	\dot{B}^+= -S (\pi C\cos\omega\cdot\dot
	{\Omega}+S\sin\omega)/\cos^2\omega\,,
	\label{eq:2.67}
	\end{equation}
with $C>0>S$ for $-1<\Omega <0$; see \eqref{eq:2.42} for this notation.
We also recall $|\omega |<\tfrac{\pi}{2},\ \cos \omega >0,\ \dot{\Omega}>0$ and $s=\sin(\omega-\tfrac{\pi}{2}\Omega)>0$, for $B^+$ and odd $k$; see the proof of lemma~\ref{lem:2.5}.
Therefore \eqref{eq:2.67} implies $\dot{B}^+>0$, if $\sin\omega \leq 0$.
It remains to show interior positivity of $\dot{B}^{+}$ for $\sin \omega>0$, i.e. for $0<\omega <\tfrac{\pi}{2}$.

Suppose $\dot{B}=0$.
We derive the relation \eqref{eq:2.64a} at such a zero, first.
We differentiate $\chi_0=0$ in \eqref{eq:2.8}, \eqref{eq:2.9} with respect to $\omega$, implicitly, to obtain
	\begin{equation}
	0=\dot{\Omega} +e^{i\omega}-
	\tfrac{\pi}{2B} e^{i\pi\Omega}\,\dot{\Omega}\,.
	\label{eq:2.71}
	\end{equation}
We have used the assumption $\dot{B} =0$ here.
We multiply \eqref{eq:2.71} by the complex conjugate coefficient of $\dot{\Omega}$ and take imaginary parts to eliminate the derivative 
$\dot{\Omega}$:
	\begin{equation}
	\begin{aligned}
	0 &= \mathrm{Im}(e^{i\omega}(1-\tfrac{\pi}{2B}e^{-i\pi\Omega}))=\\
	&= \sin\omega-\tfrac{\pi}{2B}\sin(\omega-\pi\Omega)\,.
	\end{aligned}
	\label{eq:2.72}
	\end{equation}
Substitution of $B$ from \eqref{eq:2.28} and multiplication by the resulting denominator $\sin^2(\tfrac{\pi}{2}\Omega)$ proves claim \eqref{eq:2.64a}.

We can now prove interior positivity $\dot{B}^{+}>0$ for the remaining case $0<\omega<\tfrac{\pi}{2}$.
Suppose  $\dot{B}^{+}=0$, indirectly. 
We then use  trigonometric addition and the abbreviations of \eqref{eq:2.42} to rewrite \eqref{eq:2.64a} as 
\begin{equation}
0=d(\Omega,\omega)=S^{2} \sin \omega + \tfrac{\pi}{2} \cos \omega \ (-Sc+Cs).
\label{eq:2.68}
\end{equation}
To reach a contradiction, we check positivity of each individual term.
Our assumption $0<\omega<\tfrac{\pi}{2}$ implies $\sin\omega>0,\ \cos\omega>0$.
Moreover, $-1<\Omega<0$ implies $S^2>0,\ C>0$.
By subtraction, we also obtain $s>0$ because $0<\omega-\tfrac{\pi}{2}\Omega<\pi$.
It only remains to check positivity of $-Sc$.
Indeed $H=0$ in \eqref{eq:2.27}, \eqref{eq:2.43} implies $-Sc = \tilde{\Omega} S^2 >0$, since $S^2>0$ and $\tilde{\Omega}>0$.
Hence all terms on the right hand side of \eqref{eq:2.68} are strictly positive.
This contradiction establishes $\dot{B}^{+}>0$ in all cases.

To prove claim \eqref{eq:2.64b}, we first observe that $\omega <\tfrac{\pi}{2} \underline{\Omega}_m < \tfrac{\pi}{2}\Omega<0$ holds all along the $\omega^-$-branch, because $-\tfrac{\pi}{2}<\omega<\underline{\omega} = \tfrac{\pi}{2}\underline{\Omega}_m$ defines the domain of $B^-$; see lemma~\ref{lem:2.5}.
To show $\omega_* > \pi\Omega_*$ at $\dot{B}=0$, indirectly, suppose $-\tfrac{\pi}{2}<\omega_* \leq\pi\Omega_*<0$.
We then claim $d(\Omega_*,\omega_*)<0$.
Indeed
	\begin{equation}
	\begin{aligned}
	d(\Omega_*,\omega_*) &=
	S_*^{2}\sin\omega_*+
	\tfrac{\pi}{2}\cos\omega_*\sin(\omega_*-\pi\Omega_*)\leq\\
	&\leq S_*^{2}\sin\omega_* <0\,.
	\end{aligned}
	\label{eq:2.73}
	\end{equation}
This contradiction proves \eqref{eq:2.64b}.

It remains to prove $B>-1$, for $m\geq1$. 
By continuity of $B$, and because $B^{\pm}=0$ at $\tilde{\Omega}=\Omega_{m}=2m+1$, it is sufficient to show $B^{-}\neq-1$, indirectly.
Suppose $B^{-}=-1$. 
Then $S^{2}=\cos(\omega)$, by \eqref{eq:2.28}.
Therefore \eqref{eq:2.27} implies
\begin{equation}
	\begin{aligned}0=H/S & = \tilde{\Omega}+\cos(\omega-\tfrac{\pi}{2}\Omega)/S=\\ 
& = \tilde{\Omega}+(C \cos\omega+S \sin\omega)/S=\\
& = \tilde{\Omega}+CS + \sin\omega \geq \tilde{\Omega}-2>0\,,
\end{aligned}
	\label{eq:2.74}
	\end{equation}
since $\tilde{\Omega} > \Omega_{m}-1=2m\geq 2$.
This proves $B>-1$, and completes the proof of the lemma.
\end{proof}

%%%%%%%%%%%%%%%%%%%%%%%%%%%%%%%%%%%%%%%%%%%%%%%%%%%%%%%%%%%

\section{Control-induced Hopf bifurcation}
\label{sec:3}

In absence of control, i.e. in the limit $b=2\varepsilon B\rightarrow\pm\infty$, the original delay equation \eqref{eq:1.4} possesses a trivial simple Hopf eigenvalue
	\begin{equation}
	\mu=i\tilde{\omega}= i\omega_k=i\varepsilon^{-1}
	\label{eq:3.1}
	\end{equation}
of the characteristic equation \eqref{eq:2.1}, at the original parameter
	\begin{equation}
	\lambda = \lambda_k=-(-1)^k/\varepsilon\,.
	\label{eq:3.2}
	\end{equation}
The (scaled) control parameter $B$ induces further purely imaginary Hopf eigenvalues $\mu=\mu_R+i\tilde{\omega},\ \mu_R=0$, of the 2-scale characteristic equation \eqref{eq:2.4}.
We fix and suppress $\delta=\Omega_m^{-1}$ in this section and rewrite \eqref{eq:2.1} -- \eqref{eq:2.5} as
	\begin{equation}
	\begin{aligned}
	0=
	\psi(\mu,\varepsilon,B):&=
	-\varepsilon B\mu-(-1)^k B e^{-\mu}+
	\tfrac{1}{2}(1+e^{-\pi\varepsilon\mu})=\\
	&=-\varepsilon B\mu_R-iB\tilde{\Omega}-
	(-1)^k B e^{-\mu_R-i\tilde{\omega}}+
	\tfrac{1}{2}(1+e^{-\pi\varepsilon\mu_R-i\pi\tilde{\Omega}})\,.
	\end{aligned}
	\label{eq:3.3}
	\end{equation}
As before we have abbreviated $\tilde{\Omega}=\Omega_m+\Omega = \delta^{-1}+\Omega$ here, and $\mu=\mu_R+i\tilde{\omega}$ with $\tilde{\omega}\equiv \omega\pmod{2\pi},\ \varepsilon\tilde{\omega}=\tilde{\Omega}$.
We also recall the hashing relation \eqref{eq:1.24}, i.e.
	\begin{equation}
	0=
	h(\tilde{\omega}, \Omega,\varepsilon)=
	-\tilde{\Omega}+\varepsilon\tilde{\omega}=
	-\Omega+\varepsilon(\omega+\tfrac{\pi}{2}(1-(-1)^k-(-1)^m)-2\pi j)\,,
	\label{eq:3.4}
	\end{equation}
for$-\tfrac{\pi}{2}\leq\omega<\tfrac{3}{2}\pi\,$; see lemma~\ref{lem:2.3}.
In other words, Hopf bifurcation is governed by the three real equations \eqref{eq:3.3}, \eqref{eq:3.4} for vanishing $(\Psi,h) \in \mathbb{C} \times \mathbb{R}$, in the five not quite independent real variables $(\mu, \Omega, \varepsilon,B) \in \mathbb{C}\times \mathbb{R}^3$.

In trapping lemma~\ref{lem:3.1} we observe absence of nontrivial eigenvalues $\mu = \mu_R+i\tilde{\omega}$ with imaginary parts $\tilde{\omega}\equiv \pm \tfrac{\pi}{2} \pmod{2\pi}$.
This traps imaginary parts in eigenvalue \emph{strips}:
an old and efficient idea already present in \cite{beco63,nu78}.
It establishes the crucial sequences of Hopf bifurcations at scaled control parameters
	\begin{equation}
	B=B_{m,j}^\pm\,,
	\label{eq:3.5}
	\end{equation}
where $m,j$ label specific strips of the Hopf eigenvalues $\mu =i\tilde{\omega}$, with $\Omega_{m-1} < \tilde{\Omega} = \varepsilon\tilde{\omega}<\Omega_m=2m+1$.

We also observe how eigenvalues $\mu$ cannot appear from, or disappear towards, $\mathrm{Re} \ \mu=+\infty$.
Proposition~\ref{prop:3.2} examines eigenvalues at vanishing control $b=2B\varepsilon=\pm \infty$ to establish simplicity of eigenvalues, in each strip.
With some estimates for Jacobian determinants involving $\Psi$ and $h$, in proposition~\ref{prop:3.3}, we establish the transverse crossing directions of the simple Hopf eigenvalues $\mu$, as $B$ increases through $B_{m,j}^\pm <0$; see the central crossing theorem~\ref{thm:3.4} of the present section.
In fact we observe a gain of stability, i.e.  decrease of the unstable dimensions $E=E(B)$ by $2$, at $B=B_{m,j}^+<0$, and destabilization at $B_{m,j}^-<0$.
Corollary~\ref{cor:3.5} concludes that Pyragas stabilization is impossible, for $B>0$.
Corollary~\ref{cor:3.6} concludes that unstable eigenvalues in the complex $(m,j)$-strips are present if, and only if,
	\begin{equation}
	B_{m,j}^- < B< B_{m,j}^+\,.
	\label{eq:3.6}
	\end{equation}
Corollary~\ref{cor:3.7} studies the case $m=0$ of slow frequencies $0<\tilde{\Omega} = \varepsilon\tilde{\omega}<1$, as well as the simplest case $m=1$.
It concludes stability of the strip $m=0,\ j=1$ for $B_{0,1}^+<B<0$, but instability of the strip $m=1,\ j=1$ for $B_{1,1}^-<B<0$.
With the orderings of $B_{m,j}$ with respect to $j$, for each fixed $m\leq 1$, as collected in proposition~\ref{prop:3.8} we arrive at the conclusion of the present section, in corollary~\ref{cor:3.9}:
the region $\cal{P}$:= $\lbrace B\ |\ B_{0,1}^+< B<B_{1,1}^-\rbrace$ is a nonempty Pyragas region, provided that
	\begin{equation}
	B_{m,j_{m}+1}^+ <B_{0,1}^+< B_{1,1}^-<B_{m,j_m}^-
	\label{eq:3.7}
	\end{equation}
holds for $j_m:= [  (m+1)/2]$  and all $m\geq1$.
The delicate ordering \eqref{eq:3.7} will only be established in sections~\ref{sec:4} and \ref{sec:5} below.

\begin{lem}
\label{lem:3.1}
For any fixed $\varepsilon >0$, consider strictly complex eigenvalues $\mu = \mu_R +i\tilde{\omega}$, i.e. solutions $\mu \in \mathbb{C} \smallsetminus \mathbb{R}$ of the characteristic equation \eqref{eq:3.3}.

(i)~Assume
	\begin{equation}
	\mu_R \geq 0>B\,.
	\label{eq:3.8a}
	\end{equation}
Then the only eigenvalues $\mu=\mu_R+i\tilde{\omega}$ such that
	\begin{equation}
	0<\tilde{\omega} = \mathrm{Im} \,\mu \equiv \tfrac{\pi}{2} \pmod{\pi}
	\label{eq:3.8b}
	\end{equation}
are the trivial eigenvalues $\mu = i\omega_{\tilde{k}}=i(\tilde{k}+\tfrac{1}{2})\pi$ at $\varepsilon= \omega_{\tilde{k}}^{-1}$, where $\tilde{k} \in \mathbb{N}_0$ has the same parity as $k$.

(ii)~Assume
	\begin{equation}
	\varepsilon=\omega_k^{-1},\ B\neq 0\,.
	\label{eq:3.8c}
	\end{equation}
Then the only eigenvalue $\mu =\mu_R+i\tilde{\omega}$ such that
	\begin{equation}
	\tilde{\omega} = \omega_k=(k+\tfrac{1}{2}) \pi
	\label{eq:3.8d}
	\end{equation}
is the algebraically simple eigenvalue $\mu=i\omega_k$.

(iii)~Fix $\varepsilon= \omega_k^{-1}$, for some $k\in \mathbb{N}_0$, and fix any constant $K >1$.
Consider any sequence of (scaled) control parameters $B_n$ and nontrivial eigenvalues $\mu_n=\mu_{R,n}+i\tilde{\omega}_n \neq i\omega_k$ such that
	\begin{equation}
	\begin{aligned}
	0 &\leq \mu_{R,n}\,,\\
	1/K &\leq \tilde{\omega}_n\leq K\,, \quad \mathrm{and}\\
	B_n &\rightarrow 0\,.
	\end{aligned}
	\label{eq:3.8e}
	\end{equation}
Then $|\mu_n|$ remains bounded.
Moreover, for any convergent subsequence $\mu_n$ there exists a positive integer $m$ such that
	\begin{equation}
	\begin{aligned}
	\varepsilon\lim \mu_n =
	\lim \varepsilon i\tilde{\omega}_n&=
	\lim i \tilde{\Omega}_n = i\Omega_m=i(2m+1)\,,\quad \mathrm{and}\\
	\lim \,\tilde{\omega}_n \ (\mathrm{mod}\, 2\pi) &\equiv
	\begin{cases}
	\phantom{3}\tfrac{\pi}{2} \quad \mathrm{for} \quad k+m \quad \mathrm{even,}\\
	3 \tfrac{\pi}{2} \quad \mathrm{for} \quad k+m\quad \mathrm{odd.}
	\end{cases}
	\end{aligned}
	\label{eq:3.8f}
	\end{equation}
\end{lem}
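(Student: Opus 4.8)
The plan is to split the characteristic function $\psi$ of \eqref{eq:3.3} into its real and imaginary parts and to exploit the sign constraints imposed in each of the three assertions, together with a compactness argument for part (iii).

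For (i) I would write $\tilde\omega=(\ell+\tfrac12)\pi$ with $\ell\in\mathbb{N}_0$, so that $e^{-i\tilde\omega}=-i(-1)^\ell$ is purely imaginary and the control cross term $-(-1)^kBe^{-\mu}$ feeds only into the imaginary part. The real part of \eqref{eq:3.3} then reads $0=-\varepsilon B\mu_R+\tfrac12(1+e^{-\pi\varepsilon\mu_R}\cos(\pi\tilde\Omega))$. Under \eqref{eq:3.8a} both summands are nonnegative — the first since $\varepsilon>0$, $B<0$, $\mu_R\ge0$, the second since $e^{-\pi\varepsilon\mu_R}\le1$ — so both must vanish. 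This forces $\mu_R=0$ and $\cos(\pi\tilde\Omega)=-1$, i.e.\ $\tilde\Omega$ is an odd integer. Feeding $\mu_R=0$ and $\sin(\pi\tilde\Omega)=0$ into the imaginary part leaves $B((-1)^{k+\ell}-\tilde\Omega)=0$; since $B\neq0$ and $\tilde\Omega>0$ this pins $\tilde\Omega=1$ and $k+\ell$ even, whence $\varepsilon=1/\tilde\omega=\omega_{\tilde k}^{-1}$ with $\tilde k=\ell$ of the parity of $k$. Part (ii) is the same computation at the fixed value $\tilde\omega=\omega_k$ (so $\ell=k$, $\tilde\Omega=1$): here the imaginary part collapses to $B(e^{-\mu_R}-1)=0$, forcing $\mu_R=0$ and $\mu=i\omega_k$, irrespective of the sign of $B$. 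Algebraic simplicity then follows from $\partial_\mu\psi(i\omega_k,\varepsilon,B)=\tfrac{\pi\varepsilon}{2}-\varepsilon B-iB$, whose imaginary part $-B$ is nonzero for $B\neq0$.

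The substantial assertion is (iii), and its crux is the boundedness of $\mu_{R,n}$. I would argue by contradiction: suppose $\mu_{R,n}\to+\infty$ along a subsequence. Solving the real part of \eqref{eq:3.3} for $B_n$ gives $B_n[\varepsilon\mu_{R,n}+O(e^{-\mu_{R,n}})]=\tfrac12+O(e^{-\pi\varepsilon\mu_{R,n}})$, hence $B_n\sim 1/(2\varepsilon\mu_{R,n})>0$. Inserting this into the imaginary part, the left side $\varepsilon B_n\tilde\omega_n-(-1)^kB_ne^{-\mu_{R,n}}\sin\tilde\omega_n\sim \tilde\omega_n/(2\mu_{R,n})$ is bounded below by $1/(2K\mu_{R,n})$ and so decays only \emph{polynomially}, whereas the right side $-\tfrac12 e^{-\pi\varepsilon\mu_{R,n}}\sin(\pi\tilde\Omega_n)$ decays \emph{exponentially}. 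For large $n$ these cannot balance. This polynomial-versus-exponential mismatch is the main obstruction and the key technical point; once it is in place, $\mu_{R,n}$, and thus $|\mu_n|$, stays bounded.

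Given boundedness, I would pass to a convergent subsequence $\mu_n\to\mu_*$. Since $\mu_n$ is bounded and $B_n\to0$, the two $B$-terms of \eqref{eq:3.3} vanish in the limit, leaving $1+e^{-\pi\varepsilon\mu_*}=0$. Matching modulus and argument forces $\mu_{R,*}=0$ and $\varepsilon\tilde\omega_*=\tilde\Omega_*=2m+1$ for some integer $m\ge0$. To exclude $m=0$, i.e.\ $\mu_*=i\omega_k$, I would invoke part (ii): $i\omega_k$ is an algebraically simple root of $\psi(\cdot,\varepsilon,B)$ for every $B$ near $0$ (including $B=0$, where $\partial_\mu\psi=\tfrac{\pi\varepsilon}{2}\neq0$), so by the implicit function theorem it is isolated uniformly in $B$ and no nontrivial eigenvalue $\mu_n\neq i\omega_k$ can accumulate at it. Thus $m\ge1$. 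Finally, reducing $\tilde\omega_*=(2m+1)\omega_k=((2m+1)k+m+\tfrac12)\pi$ modulo $2\pi$ yields $\tilde\omega_*\equiv\tfrac{\pi}{2}$ or $\tfrac{3\pi}{2}$ according to the parity of $k+m$, which is exactly the dichotomy \eqref{eq:3.8f}.
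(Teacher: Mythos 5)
Your proposal is correct, and for parts (i), (ii), and the boundedness claim in (iii) it is essentially the paper's own argument: the same real/imaginary decomposition of $\psi$ with the same sign bookkeeping (two nonnegative summands in the real part forcing $\mu_R=0$ and $\cos(\pi\tilde\Omega)=-1$), the same derivative $\psi_\mu(i\omega_k)=\tfrac{\pi}{2}\varepsilon-\varepsilon B-iB$ for simplicity, and the same asymptotics $B_n\sim 1/(2\varepsilon\mu_{R,n})$ leading to a contradiction --- the paper phrases it as $\lim b_n\mu_n$ finite and nonzero, whence the imaginary part forces $\tilde\omega_n\to 0$ against the lower bound $1/K$, which is your polynomial-versus-exponential mismatch in different clothing. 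The one genuine difference is the endgame of (iii), the exclusion of accumulation at the trivial eigenvalue ($m=0$). The paper divides the characteristic equation by $\varepsilon\mu_n-i$, so that both sides become entire functions of $\mu_n$; the simple zero of $1+e^{-\pi\varepsilon\mu}$ at $\mu=i\omega_k$ is cancelled by the denominator, the limiting entire function takes the nonzero value $\pi\varepsilon$ there, and hence the limit $\mu_*$ must be one of the remaining zeros $\varepsilon\mu_*=i(2m+1)$, $m\geq 1$: identification of the limit and exclusion of $m=0$ happen in a single algebraic step. You instead pass to the limit directly in $\psi$, obtain $1+e^{-\pi\varepsilon\mu_*}=0$, and then rule out $\mu_*=i\omega_k$ by an implicit-function-theorem (Hurwitz/Rouch\'e-type) isolation argument, using that $i\omega_k$ is a root of $\psi(\cdot,\varepsilon,B)$ for \emph{every} $B$ (noninvasiveness) and is simple at $B=0$, where $\partial_\mu\psi=\tfrac{\pi}{2}\varepsilon\neq 0$, so that for small $|B|$ it is the unique root in a fixed neighborhood and nontrivial roots stay uniformly away from it. Both mechanisms are sound; the paper's trick is slicker, while your version is more modular and makes explicit which structural facts are actually used --- persistence and simplicity of the trivial root, which you correctly extend from part (ii) to $B=0$.
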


\begin{proof}[\textbf{Proof.}]
To prove claim~(i), suppose $\mu=\mu_R+i\tilde{\omega}$ with $\tilde{\omega}=\omega_{\tilde{k}}$:= $(\tilde{k}+\tfrac{1}{2})\pi$, for some nonnegative integer $\tilde{k}$.
We have to conclude $\mu_R =0$ and $\varepsilon= \omega_{\tilde{k}}^{-1}$.

Abbreviating $\varepsilon\tilde{\omega} =: \tilde{\Omega}$, we decompose the characteristic equation \eqref{eq:3.3} into real and imaginary parts at $\tilde{\omega} = \omega_{\tilde{k}}$ to obtain
	\begin{align}
	-\varepsilon B\mu_R+ 
	\tfrac{1}{2}
	(1+ e^{-\pi\varepsilon\mu_R}\cos(\pi \tilde{\Omega}))=0\,;
	\label{eq:3.9}\\
	-B\tilde{\Omega} +(-1)^{k+\tilde{k}}Be^{-\mu_R}-
	\tfrac{1}{2}e^{-\pi\varepsilon\mu_R}\sin(\pi \tilde{\Omega})=0\,.
	\label{eq:3.10}	
	\end{align}
The real part \eqref{eq:3.9} can be solved for $B\mu_R$ as
	\begin{equation}
	0\geq B\mu_R=
	\tfrac{1}{2}\varepsilon^{-1}(1+e^{-\pi\varepsilon\mu_R}
	\cos (\pi \tilde{\Omega}))\geq 0\,.
	\label{eq:3.11}
	\end{equation}
Indeed, the right inequality follows because we have assumed $\mu_R \geq 0$ in \eqref{eq:3.8a}, and the left inequality follows from our assumption $B<0$.
In particular we conclude
	\begin{equation}
	\mu_R =0\qquad \text{and}\qquad \cos(\pi\tilde{\Omega})=-1\,.
	\label{eq:3.12}
	\end{equation}
Insertion of $\mu_R=0$ and $\cos (\pi\tilde{\Omega})=-1,\ \sin(\pi \tilde{\Omega})=0$ in the imaginary part \eqref{eq:3.10} of the characteristic equation  \eqref{eq:3.3} then implies
	\begin{equation}
	0> B\tilde{\Omega} =(-1)^{k+\tilde{k}} B e^{-\mu_R}=(-1)^{k+\tilde{k}}B\,.
	\label{eq:3.13}
	\end{equation}
This proves $1 = \tilde{\Omega} = \varepsilon \omega_{\tilde{k}}$ and $k\equiv\tilde{k}$ (mod 2), as claimed.

To prove claim~(ii), we first note
	\begin{equation}
	\tilde{\Omega} = \varepsilon\tilde{\omega} = \varepsilon\omega_k=1\,,
	\qquad \exp (-i\tilde{\omega})=-(-1)^ki\,,
	\label{eq:3.14a}
	\end{equation}
by assumptions \eqref{eq:3.8c}, \eqref{eq:3.8d}.
For the imaginary part \eqref{eq:3.10} of the characteristic equation \eqref{eq:3.3} at $\mu= \mu_R+i\tilde{\omega}$ this implies
	\begin{equation}
	0= -B+Be^{-\mu_R}\,,
	\label{eq:3.14b}
	\end{equation}
i.e. $\mu_R=0$.

To show algebraic simplicity of the resulting trivial eigenvalue $\mu = i\omega_k$, we differentiate the right hand side of the characteristic equation \eqref{eq:3.3} with respect to $\mu$, there.
A vanishing derivative would require
	\begin{equation}
	0=-\varepsilon B-iB+\tfrac{\pi}{2}\varepsilon\,.
	\label{eq:3.14c}
	\end{equation}
This contradiction proves algebraic simplicity of the trivial Hopf eigenvalue $\mu = \pm i\omega_k$ at $\varepsilon= \omega_k^{-1}$, for any $B$.

To prove claim~(iii), we rewrite the characteristic equation \eqref{eq:2.1} in the form 
\begin{equation}
	b_{n}(\varepsilon \mu_{n}+(-1)^{k}\exp(-\mu_{n}))=1 + \exp(-\pi\varepsilon \mu_{n}).
	\label{eq:3.15a}
	\end{equation}
To show $|\mu_{n}|$ remains bounded, indirectly, we first suppose
	\begin{equation}
	|\mu_n|\rightarrow \infty\,,
	\label{eq:3.16}
	\end{equation}
for some subsequence.
In \eqref{eq:3.8e} we have assumed bounded imaginary parts $\tilde{\omega}_n = \mathrm{Im}\,\mu_n$.
Therefore \eqref{eq:3.8a}, \eqref{eq:3.8e}, and \eqref{eq:3.16} imply
	\begin{equation}
	\mu_{R,n} = \text{Re}\, \mu_n \rightarrow + \infty\,.
	\label{eq:3.17}
	\end{equation}
From \eqref{eq:3.15a} we then obtain, more precisely,
	\begin{equation}
	\varepsilon \,\lim \, b_n\mu_n = 1\,.
	\label{eq:3.18}
	\end{equation}
Taking imaginary parts of \eqref{eq:3.15a} and passing to a convergent subsequence of $\tilde{\omega}_n$, we also obtain
	\begin{equation}
	\begin{aligned}
	\lim \, \tilde{\omega}_n &=
	\lim \tfrac{\mu_n}{\varepsilon b_n\mu_n}\,\mathrm{Im}\,
	(-(-1)^k b_{n}e^{-\mu_n}+e^{-\pi\varepsilon\mu_n})=\\
	&=
	\lim \mu_n\, \mathrm{Im}\,(-(-1)^k b_n e^{-\mu_n}+e^{-\pi\varepsilon\mu_n})=0\,.
	\end{aligned}
	\label{eq:3.19a}
	\end{equation}
This contradicts our lower bound \eqref{eq:3.8e} on $\tilde{\omega}_n$.
Therefore the sequence $|\mu_n|$ remains uniformly bounded.

Next we divide the original unscaled characteristic equation \eqref{eq:2.1} for $\mu = \mu_n$ by $\varepsilon\mu_n-i\neq0$, at fixed $\varepsilon= \omega_k^{-1}$, to obtain
	\begin{equation}
	b_n\frac{\varepsilon\mu_n+(-1)^k\exp (-\mu_n)}{\varepsilon\mu_n-i}=
	\varepsilon \frac{1+\exp (-\pi\varepsilon\mu_n)}{\varepsilon\mu_n-i}\,.
	\label{eq:3.15}
	\end{equation}
Both sides extend to entire functions of $\mu_n$.
Since \eqref{eq:3.15} is entire, and $|\mu_{n}|$ remain bounded, $b_n \rightarrow 0$ then implies
	\begin{equation}
	\varepsilon \, \lim \, \frac{1+\exp (-\pi\varepsilon\mu_n)}
	{\varepsilon\mu_n-i}=0\,.
	\label{eq:3.19b}
	\end{equation}
The denominator cancels the simple zero $\varepsilon\mu_n = i\Omega_0=i$ of the numerator.
The remaining zeros $\varepsilon \mu_{n}=i \Omega_{m}=i (2m+1)$ of the numerator prove claim \eqref{eq:3.8f}, and the trapping lemma.
\end{proof}

We now recall the location of eigenvalues $\mu$ of the characteristic equation \eqref{eq:3.3} in the limit $B\rightarrow \pm \infty$ of vanishing control.
This is well-known material; see e.g. \cite{beco63, hale77, nu78}.
We include a short proof for the convenience of the reader.

\begin{prop}
Let $\varepsilon > 0$.
Consider eigenvalues $\mu \in \mathbb{C}$ at vanishing control $B =\pm \infty$, i.e. solutions of
	\begin{equation}
	0= \varepsilon\mu +(-1)^ke^{-\mu}\,.
	\label{eq:3.20}
	\end{equation}
Then the following claims~(i) -- (iv) hold true.

(i) If $0<\, \mathrm{Im}\, \mu \equiv \tfrac{\pi}{2}\pmod{\pi}$, then
	\begin{equation}
	\mathrm{Re}\, \mu_R=0\,,\ \mu=i\omega_{\tilde{k}} =i(\tilde{k}+1/2)\pi
	 \quad \mathrm{and} \quad \varepsilon= \omega_{\tilde{k}}^{-1}\,,
	\label{eq:3.21}
	\end{equation}
where $\tilde{k} \in \mathbb{N}_0$ has the same parity as $k$.

(ii) At $\varepsilon=1/\omega_k$ the eigenvalue $\mu=i\omega_k$ is algebraically simple.
The local continuation $\mu=\mu(\varepsilon)$ satisfies
	\begin{equation}
	\frac{d}{d\varepsilon} \mu(0) = -\frac{1}{\varepsilon(1+\varepsilon^{2})}(1+i\varepsilon)\,.
	\label{eq:3.22}
	\end{equation}

(iii) For $1/\omega_k<\varepsilon < 1/\omega_{k-2}$ the nontrivial complex eigenvalues $\mu \in \mathbb{C}\smallsetminus \mathbb{R}$ with $\mathrm{Re}\, \mu \geq 0,\ \mathrm{Im}\, \mu>0$ are given by $[k/2]$ algebraically simple eigenvalues $\mu_{0,j},\ j=1, \ldots, [k/2]$, one in each strip
	\begin{equation}
	\begin{aligned}
	0 &< \mathrm{Re}\, \mu_{0,j}\,;\\
	\omega_k-2j\pi &< \mathrm{Im}\, \mu_{0,j} < \omega_k -2j\pi +\tfrac{\pi}{2}\,.
	\end{aligned}
	\label{eq:3.23}
	\end{equation}

(iv) For even $k$, there do not exist real eigenvalues $\mu \geq 0$. 
If $k$ is odd, the only real eigenvalue $\mu \geq 0$ is the algebraically simple eigenvalue defined by the unique positive solution of
	\begin{equation}
	\varepsilon \mu = e^{-\mu}\,.
	\label{eq:3.24}
	\end{equation}
\label{prop:3.2}
\end{prop}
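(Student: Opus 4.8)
The plan is to separate the transcendental equation~\eqref{eq:3.20} into real and imaginary parts. Writing $\mu=\mu_R+i\tilde\omega$, the equation $\varepsilon\mu+(-1)^ke^{-\mu}=0$ is equivalent to the two real equations $\varepsilon\mu_R=-(-1)^ke^{-\mu_R}\cos\tilde\omega$ and $\varepsilon\tilde\omega=(-1)^ke^{-\mu_R}\sin\tilde\omega$, supplemented by the modulus relation $\varepsilon|\mu|=e^{-\mu_R}$ obtained by taking absolute values. Claim~(i) is then immediate: the resonance $\cos\tilde\omega=0$ forces $\mu_R=0$ through the first equation, the modulus relation gives $\varepsilon\tilde\omega=1$, and the second equation, via $\sin\tilde\omega=(-1)^{\tilde k}$ at $\tilde\omega=\omega_{\tilde k}$, yields $(-1)^{k+\tilde k}=1$. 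This fixes the common parity and the value $\varepsilon=\omega_{\tilde k}^{-1}$.

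For the simplicity assertions I would exploit the clean identity $g'(\mu)=\varepsilon-(-1)^ke^{-\mu}=\varepsilon(1+\mu)$, where $g(\mu)$ denotes the left-hand side of~\eqref{eq:3.20} and the second equality substitutes $(-1)^ke^{-\mu}=-\varepsilon\mu$ from $g(\mu)=0$. Hence the only possible multiple root is $\mu=-1$, so every eigenvalue with $\mathrm{Re}\,\mu\geq0$ is algebraically simple, which covers the simplicity in~(ii)--(iv). Implicit differentiation of~\eqref{eq:3.20} along a branch $\mu=\mu(\varepsilon)$ gives $\mu'(\varepsilon)=-\mu/(\varepsilon(1+\mu))$; inserting the trivial Hopf value $\mu=i\omega_k=i/\varepsilon$ and rationalizing yields~\eqref{eq:3.22}.

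The heart of the proposition is~(iii). Dividing the two real equations eliminates $\varepsilon$ and produces the $\varepsilon$-independent relation $\mu_R=-\tilde\omega\cot\tilde\omega$, after which the second equation fixes $\varepsilon=F(\tilde\omega):=(-1)^ke^{\tilde\omega\cot\tilde\omega}\sin\tilde\omega/\tilde\omega$. For $\mu_R\geq0$ the sign conditions $(-1)^k\cos\tilde\omega\leq0$ and $(-1)^k\sin\tilde\omega>0$, together with the a~priori bound $\tilde\omega\leq|\mu|\leq1/\varepsilon<\omega_k$ from the modulus relation, confine admissible frequencies to exactly the $[k/2]$ quarter-period strips of~\eqref{eq:3.23} lying below $\omega_k$, enumerated by $j$. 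On each such strip I would show, after the substitution $\tilde\omega=2l\pi-\xi$, that $(\ln F)'$ is a sum of manifestly positive terms, so $F$ decreases strictly and bijectively from its edge value $\omega_{k-2j}^{-1}$ (attained in the limit $\mu_R\to0$, which recovers part~(i)) down to $0$. Consequently, for each $\varepsilon$ with $1/\omega_k<\varepsilon<1/\omega_{k-2}$ every admissible strip contains precisely one solution, giving $[k/2]$ simple eigenvalues $\mu_{0,j}$; the lower endpoint $1/\omega_k$ is exactly what excludes the trivial strip carrying $i\omega_k$.

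Claim~(iv) sets $\tilde\omega=0$: the first equation becomes $\varepsilon\mu_R=-(-1)^ke^{-\mu_R}$, whose right-hand side is negative for even $k$ (so no nonnegative root exists), and which reduces for odd $k$ to~\eqref{eq:3.24}, with a unique positive root since $\varepsilon\mu_R$ increases from $0$ while $e^{-\mu_R}$ decreases from $1$. The main obstacle is the bookkeeping in~(iii): correctly identifying the admissible strips, matching them to the index $j$, and verifying that the monotone range of $F$ on each strip contains the prescribed $\varepsilon$-window, so that the count is exactly $[k/2]$ and the trivial strip is excluded. As throughout the paper, the even-$k$ case follows from odd $k$ by the reflection $\omega\mapsto\omega+\pi$, but the endpoint comparisons against $1/\omega_{k-2}$ and $1/\omega_k$ demand care.
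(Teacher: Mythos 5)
Your proposal is correct, and on the central claim~(iii) it takes a genuinely different route from the paper. Claims~(i), (ii), (iv) match the paper's treatment in substance: the paper also splits \eqref{eq:3.20} into real and imaginary parts for (i), uses the same implicit differentiation for (ii), and refers (iv) back to lemma~\ref{lem:2.1}; your uniform simplicity argument --- on the zero set, $\tfrac{d}{d\mu}\bigl(\varepsilon\mu+(-1)^k e^{-\mu}\bigr)=\varepsilon(1+\mu)$, so the only possible multiple root is $\mu=-1$ --- is a clean substitute for the paper's separate nondegeneracy checks. For (iii), however, the paper argues by continuation: starting from the small-delay regime, where all complex eigenvalues satisfy $\mathrm{Re}\,\mu\to-\infty$ by \cite{kur71}, each simple Hopf eigenvalue enters its strip at the resonance $\varepsilon=\omega_{\tilde{k}}^{-1}$ by (ii), is trapped there by (i) together with an upper bound on real parts, persists as a simple zero under analytic continuation, and induction on $k$ yields the count $[k/2]$. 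You instead solve the system explicitly, $\mu_R=-\tilde{\omega}\cot\tilde{\omega}$ and $\varepsilon=F(\tilde{\omega})=(-1)^k e^{\tilde{\omega}\cot\tilde{\omega}}\sin\tilde{\omega}/\tilde{\omega}$, and confine admissible frequencies to the quarter-period strips via the sign conditions and the modulus bound $\tilde{\omega}\leq 1/\varepsilon<\omega_k$. The monotonicity step you deferred does work: for odd $k$, substituting $\tilde{\omega}=2l\pi-\xi$ with $\xi\in(0,\tfrac{\pi}{2}]$ gives
\begin{equation*}
\partial_\xi \ln F \;=\; 2\cot\xi+\frac{2l\pi-\xi}{\sin^2\xi}+\frac{1}{2l\pi-\xi}\;>\;0\,,
\end{equation*}
with $F(\tfrac{\pi}{2})=\omega_{k-2j}^{-1}$ and $F\to 0$ as $\xi\to 0^+$, so $F$ maps each open strip bijectively onto $(0,\omega_{k-2j}^{-1})$; since $1/\omega_k<\varepsilon<1/\omega_{k-2}\leq 1/\omega_{k-2j}$ for every $j\geq 1$, each of the $[k/2]$ strips carries exactly one eigenvalue, with $\mu_R>0$, and the boundary cases ($\cos\tilde{\omega}=0$ needs resonant $\varepsilon$ by (i); $\sin\tilde{\omega}=0$ is impossible) are excluded. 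Your route is elementary and self-contained --- no induction, no continuation argument, no external small-delay reference --- and it yields more, namely a global monotone parametrization of each eigenvalue branch by its frequency. What the paper's trapping-and-continuation scheme buys is reusability: essentially the same machinery is deployed again in section~\ref{sec:3} for the controlled characteristic equation, where the extra exponential term destroys the explicit solvability your computation relies on.
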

$\hphantom{--}$\\[-2.5cm]

\begin{proof}[\textbf{Proof.}]
Let $\mu = \mu_R+i\tilde{\omega}$.
To prove claim~(i), we assume $\tilde{\omega}= \omega_{\tilde{k}}$.
We take real parts of the complex characteristic equation \eqref{eq:3.20} to see that $\cos \tilde{\omega}=0$ implies $\mu_R=0$.
Taking imaginary parts,
	\begin{equation}
	\begin{aligned}
	0 &= \varepsilon \omega_{\tilde{k}}-(-1)^k 
	e^{-\mu_R}\sin\omega_{\tilde{k}}=\\
	&= \varepsilon \omega_{\tilde{k}}-(-1)^{k+\tilde{k}}
	\end{aligned}
	\label{eq:3.25}
	\end{equation}
shows the remaining claims of (i).

Claim~(ii) follows by implicit differentiation of \eqref{eq:3.20} with respect to $\varepsilon$:
	\begin{equation}
	0=\mu+(\varepsilon-(-1)^ke^{-\mu})\mu'
	\label{eq:3.26}
	\end{equation}
where $\mu'$ abbreviates the implicit derivative with respect to $\varepsilon$.
Inserting $\varepsilon = \omega_k^{-1}$ and $\mu=i\omega_k$ proves claim~(ii).

Claim~(iii) follows by global continuation of the simple Hopf eigenvalues $\mu=\mu(\varepsilon)$ with respect to decreasing $\varepsilon$, alias increasing $|\lambda |$.
We may proceed by induction on $k$.
For large $\varepsilon$, i.e. for small $\lambda$ alias small rescaled delay, already \cite{kur71} observed $\text{Re}\, \mu \rightarrow-\infty$ for all complex eigenvalues.
At $\varepsilon= \omega_{\tilde{k}}^{-1}$, with $\tilde{k}$ of the same parity as $k$ and $j$:= $[\tilde{k}/2]+1$, a simple Hopf eigenvalue $\mu= \mu_{0,j}$:= $i\omega_{\tilde{k}}$ appears on the imaginary axis.
By property~(ii) it progresses, locally, for decreasing $\varepsilon$, into the strip \eqref{eq:3.23}.
By property~(i) that simple eigenvalue $\mu_{0,j}(\varepsilon)$ can never leave that trapping strip again, because $\text{Re}\, \mu_{0,j}>0$ remains bounded above for $\varepsilon>0$ bounded below.
By standard complex analysis, therefore, each $\mu_{0,j}(\varepsilon)$ remains simple and continues globally in its strip, for $0<\varepsilon<\omega_{\tilde{k}}^{-1}$.
The last value $\tilde{k}$ encountered for $\varepsilon> \omega_k^{-1}$ is $\tilde{k}=k-2$.
This proves claim~(iii).

Claim~(iv) on real eigenvalues $\mu$ has been addressed in lemma~\ref{lem:2.1} already.
This proves the proposition.
\end{proof}

The following proposition collects the partial derivatives of the \emph{characteristic function}
	\begin{equation}
	\psi (\mu, \varepsilon,B):=
	-\varepsilon B \mu-(-1)^k Be^{-\mu} +\tfrac{1}{2}
	(1+e^{-\pi\varepsilon\mu})
	\label{eq:3.27}
	\end{equation}
introduced in \eqref{eq:3.3}.

\begin{prop}
The partial derivatives of $\psi = \psi(\mu,\varepsilon,B)$ satisfy
	\begin{align}
	\psi_\mu &= 
	-\varepsilon B +(-1)^k Be^{-\mu}-
	\tfrac{\pi}{2}\varepsilon e^{-\pi\varepsilon\mu}\,;
	\label{eq:3.28}\\
	\psi_\varepsilon &=
	-\mu(B +\tfrac{\pi}{2} e^{-\pi\varepsilon\mu})\,;
	\label{eq:3.29}\\
	B\psi_B &=
	-\varepsilon B \mu-(-1)^kBe^{-\mu}= 
	\psi-\tfrac{1}{2}(1+e^{-\pi\varepsilon\mu})\,.
	\label{eq:3.30}
	\end{align}
At imaginary eigenvalues $\mu= i\tilde{\omega}$, where $\psi (\mu,\varepsilon;B)=0$, and with the abbreviation $\tilde{\Omega}$:= $\varepsilon\tilde{\omega}$, we also obtain the Jacobian determinant
	\begin{equation}
	-B \det \psi_{(\varepsilon, B)} = \tilde{\omega}\cdot 
	(B+\tfrac{\pi}{2})\cdot \cos^2(\tfrac{\pi}{2}\tilde{\Omega})\,.
	\label{eq:3.31}
	\end{equation}
\label{prop:3.3}
\end{prop}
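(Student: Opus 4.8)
The plan is to compute the three partial derivatives \eqref{eq:3.28}--\eqref{eq:3.30} by straightforward differentiation of the characteristic function $\psi$ in \eqref{eq:3.27}, and then assemble the Jacobian determinant \eqref{eq:3.31} by specializing to purely imaginary eigenvalues $\mu=i\tilde\omega$ and using the constraint $\psi=0$ to simplify. The first three identities are purely mechanical: differentiating $-\varepsilon B\mu-(-1)^kBe^{-\mu}+\tfrac12(1+e^{-\pi\varepsilon\mu})$ with respect to $\mu$ gives \eqref{eq:3.28} (the factor $-\tfrac{\pi}{2}\varepsilon$ on the last term from the chain rule); differentiating with respect to $\varepsilon$ yields $-B\mu-\tfrac{\pi}{2}\mu e^{-\pi\varepsilon\mu}$, which factors as \eqref{eq:3.29}; and differentiating with respect to $B$ gives $-\varepsilon\mu-(-1)^ke^{-\mu}$, so that multiplying by $B$ recovers exactly the first two terms of $\psi$, hence $B\psi_B=\psi-\tfrac12(1+e^{-\pi\varepsilon\mu})$ as in \eqref{eq:3.30}.

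Next I would turn to the determinant \eqref{eq:3.31}. First I would set $\mu=i\tilde\omega$, $\tilde\Omega=\varepsilon\tilde\omega$, and impose $\psi=0$, which by \eqref{eq:3.30} forces $B\psi_B=-\tfrac12(1+e^{-i\pi\tilde\Omega})$. The $2\times2$ Jacobian $\psi_{(\varepsilon,B)}$ has complex entries $\psi_\varepsilon$ and $\psi_B$, and its determinant must be read as a real $2\times2$ determinant of the map $(\varepsilon,B)\mapsto(\operatorname{Re}\psi,\operatorname{Im}\psi)$; concretely $\det\psi_{(\varepsilon,B)}=\operatorname{Im}(\overline{\psi_\varepsilon}\,\psi_B)$. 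The plan is therefore to substitute $\psi_\varepsilon=-i\tilde\omega(B+\tfrac{\pi}{2}e^{-i\pi\tilde\Omega})$ from \eqref{eq:3.29} and $\psi_B=-\tfrac{1}{2B}(1+e^{-i\pi\tilde\Omega})$ from the simplified \eqref{eq:3.30}, form the product $\overline{\psi_\varepsilon}\,\psi_B$, and extract the imaginary part.

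The computation then reduces to the trigonometric identity
\begin{equation}
\operatorname{Im}\Bigl(i(B+\tfrac{\pi}{2}e^{i\pi\tilde\Omega})(1+e^{-i\pi\tilde\Omega})\Bigr)
=(B+\tfrac{\pi}{2})(1+\cos\pi\tilde\Omega)
=2(B+\tfrac{\pi}{2})\cos^2(\tfrac{\pi}{2}\tilde\Omega)\,,
\label{eq:planidentity}
\end{equation}
after collecting the prefactors $\tilde\omega$ and $\tfrac{1}{2B}$ and the overall sign coming from $-B\det$. The half-angle formula $1+\cos\pi\tilde\Omega=2\cos^2(\tfrac{\pi}{2}\tilde\Omega)$ is what produces the squared cosine in \eqref{eq:3.31}, and the cross terms $B\cos\pi\tilde\Omega$ and $\tfrac{\pi}{2}$ cancel precisely in the imaginary part so that only $(B+\tfrac{\pi}{2})$ survives as a factor.

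**The main obstacle** is bookkeeping rather than conceptual: one must interpret $\det\psi_{(\varepsilon,B)}$ correctly as the real Jacobian of the $\mathbb{R}^2\to\mathbb{R}^2$ map and track the signs through the conjugation and the factor $-B$, since a single misplaced sign or an off-by-$i$ in the Wirtinger bookkeeping would corrupt the clean factor $(B+\tfrac{\pi}{2})$ that is essential for the transversality analysis in theorem~\ref{thm:3.4}. Once the reduction to \eqref{eq:planidentity} is in hand, everything else is elementary trigonometry, and the appearance of $\cos^2(\tfrac{\pi}{2}\tilde\Omega)$ dovetails neatly with the same factor already present in \eqref{eq:2.28} and \eqref{eq:2.30}.
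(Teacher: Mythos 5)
Your proposal is correct and follows essentially the same route as the paper: mechanical computation of the three partial derivatives, the identification $\det \psi_{(\varepsilon,B)} = \operatorname{Im}(\overline{\psi_\varepsilon}\,\psi_B)$ for the real $2\times 2$ Jacobian, substitution of $\psi_\varepsilon$ and of $\psi_B = -\tfrac{1}{2B}(1+e^{-i\pi\tilde\Omega})$ obtained from $\psi=0$, and the half-angle identity $1+\cos(\pi\tilde\Omega) = 2\cos^2(\tfrac{\pi}{2}\tilde\Omega)$. All signs and prefactors in your reduction check out, so nothing is missing.
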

\begin{proof}[\textbf{Proof.}]
The calculations of \eqref{eq:3.28} -- \eqref{eq:3.30} are trivial.
For $\psi=0$ in \eqref{eq:3.27} we also obtain
	\begin{equation}
	\psi_B= -\tfrac{1}{2B}(1+e^{-\pi\varepsilon\mu})\,.
	\label{eq:3.32}
	\end{equation}
Since $\psi\in \mathbb{C}\cong \mathbb{R}^2$, the Jacobian $\psi_{(\varepsilon, B)}$ can be written abstractly as
	\begin{equation}
	\det \psi_{(\varepsilon, B)} = \det
	\begin{pmatrix}
	\text{Re}\; \psi_\varepsilon & \text{Re}\; \psi_B\\
	\text{Im}\; \psi_\varepsilon & \text{Im}\;\psi_B
	\end{pmatrix}
	= \text{Im}\;(\overline{\psi}_\varepsilon\cdot \psi_B)\,.
	\label{eq:3.33}
	\end{equation}
Insertion of \eqref{eq:3.29}, \eqref{eq:3.30}, $\psi=0$ at imaginary eigenvalues $\mu =i\tilde{\omega} = i\varepsilon^{-1} \tilde{\Omega}$ provides the Jacobian determinant
	\begin{equation}
	\begin{aligned}
	-B\det \psi_{(\varepsilon, B)} &=
	\text{Im}\;(\overline{\psi}_\varepsilon \cdot (-B\psi_B)) =\\
	&=
	\tfrac{1}{2}\tilde{\omega}\;\text{Im}\,((Bi+\tfrac{\pi}{2}
	i e^{i\pi\tilde{\Omega}}) (1+e^{-i\pi\tilde{\Omega}}))=\\
	&=
	\tfrac{1}{2}\tilde{\omega}\,(B+\tfrac{\pi}{2})
	(1+\cos (\pi\tilde{\Omega}))\,.
	\end{aligned}
	\label{eq:3.34}
	\end{equation}
This proves \eqref{eq:3.31} and the proposition.
\end{proof}

With these lengthy preparations we can now address transverse crossings at the relevant Hopf eigenvalues, from two viewpoints.
Fix $\varepsilon= \omega_k^{-1}$ and consider nontrivial Hopf eigenvalues $\mu= i\tilde{\omega},\ 0<\tilde{\omega}\neq \omega_k=(k+\tfrac{1}{2})\pi$, at control parameter $B\neq 0$.
Our viewpoint above was to study, equivalently,
	\begin{equation}
	\psi(\mu,\varepsilon, B) =0
	\label{eq:3.35}
	\end{equation}
for $\psi$ defined in \eqref{eq:3.27}; see \eqref{eq:3.3}.

In section~\ref{sec:2} our viewpoint was slightly different; see \eqref{eq:2.21} and lemma \ref{lem:2.3}.
Hashing with the shifted slow frequency
	\begin{equation}
	\Omega = \tilde{\Omega} -\Omega_m =
	\varepsilon\tilde{\omega}-(2m+1)=
	\varepsilon(\omega +\tfrac{\pi}{2}(1-(-1)^k-(-1)^m)-2\pi j)\,,
	\label{eq:3.36}
	\end{equation}
$-\tfrac{\pi}{2}\leq \omega< \tfrac{3}{2}\pi$, $\omega \equiv \tilde{\omega}\pmod{2\pi}$, equivalently, we wrote the characteristic equation for the eigenvalue $\mu=i\tilde{\omega}$ as
	\begin{equation}
	0= \chi_0 (\delta, \omega, \Omega, B)=
	\tilde{\Omega}+(-1)^ki e^{i\omega}-
	\tfrac{1}{B}\sin(\tfrac{\pi}{2}\Omega) e^{i\tfrac{\pi}{2}\Omega}\,;
	\label{eq:3.37}
	\end{equation}
see \eqref{eq:2.4} -- \eqref{eq:2.9}.
In lemma~\ref{lem:2.5} we have described the solutions of \eqref{eq:3.37} by functions
	\begin{equation}
	\Omega = \Omega(\omega)\,,
	\label{eq:3.38}
	\end{equation}
for any fixed $\delta = \Omega_m^{-1}$ and nonnegative integers $m$.
That description was completed with $B= B^\pm (\Omega) = B^\pm (\omega, \Omega(\omega))$ as	
	\begin{equation}
	B=(-1)^k\sin^2(\tfrac{\pi}{2}\Omega)/\cos \omega=
	(-1)^k\cos^2 (\tfrac{\pi}{2}\tilde{\Omega})/\cos \omega\,;
	\label{eq:3.39}
	\end{equation}
see \eqref{eq:2.28} and lemmata~\ref{lem:2.4}, \ref{lem:2.6}.

We now combine these results, for $\varepsilon= \omega_k^{-1}$, with the hashing \eqref{eq:3.36} to define the \emph{Hopf frequencies} $\omega_{m,j}^\pm \in (-\tfrac{\pi}{2}, \tfrac{3}{2}\pi)$ as the intersections of \eqref{eq:3.38} with \eqref{eq:3.36}, i.e.
	\begin{equation}
	\varepsilon(\omega_{m,j}^\pm + \tfrac{\pi}{2}(1-(-1)^k-(-1)^m)-2\pi j)=
	\Omega(\omega_{m,j}^\pm)\,,
	\label{eq:3.40}
	\end{equation}
for $j=1, \ldots, j_m^{\max}\,$. Here and below we restrict attention to the case $\underline{\tilde{\Omega}}_m\leq \tilde{\Omega}\leq \Omega_m$, i.e. $\underline{\Omega}_m\leq\Omega \leq 0$. 
Indeed, the opposite case of $m\geq 1$ and $B^\pm >0$ in \eqref{eq:2.33} will turn out irrelevant in corollary~\ref{cor:3.6} below.

We have to comment on the precise meaning of \eqref{eq:3.40}, in view of lemma \ref{lem:2.5}.
Consider the case $m=0$ first; see fig.~\ref{fig:3.1}.
Since the derivatives of the two branches $\omega=\omega^{\pm}(\Omega)$ in \eqref{eq:2.37} are bounded, their intersections with the near-vertical hashing lines $\tilde{\omega}=\tilde{\Omega}/\varepsilon$ of slope $1/\varepsilon$ are transverse, for $0<\varepsilon\leq\varepsilon_0$ small enough. 
This provides two intersections $\omega=\omega_{0,j}^{\pm}$, one pair for each $j,$ as indicated.

The cases $m\geq 1$ of fig. \ref{fig:3.2} are slightly more involved. 
The strictly decreasing lower branch $\omega = \omega^-(\Omega), \ \underline{\Omega}_m < \Omega < 0,$ is characterized by
	\begin{equation}
	-\tfrac{\pi}{2} < \omega = \omega^-(\Omega) < \underline{\omega} = \tfrac{\pi}{2} \underline{\Omega}_m < 0\,.
	\label{eq:3.40a}
	\end{equation}
This decreasing branch provides unique, transverse intersections $\omega = \omega_{m,j}^-$ with the increasing hashing lines.
The strictly increasing upper branch $\omega = \omega^+(\Omega), \ \underline{\Omega}_m < \Omega < 0,$ on the other hand, may exhibit non-transverse, and even multiple, intersections $\omega \in \omega_{m,j}^+$ with the near-vertical hashing lines, for near-minimal $\Omega\gtrsim\underline{\Omega}_m$.
To simplify our presentation, mostly, we will think of intersection \emph{points} $\omega = \omega_{m,j}^+$, rather than intersection \emph{sets} $\omega \in \omega_{m,j}^+$.
Of course we will proceed with the appropriate care to address the general set case whenever necessary.
Eventually, we will be able to exclude cases where the minimal intersection $\omega_{m,j}^-$ of the hashing line $(m,j)$ belongs to the upper branch $\omega^+(\Omega)$;
see lemma \ref{lem:5.4} below.

With these cautioning remarks in mind, we may proceed, mostly, with the additional requirement
	\begin{equation}
	\omega_{m,j}^- <\omega_{m,j}^+\,,
	\label{eq:3.41}
	\end{equation}
for $m\geq 1$.
The only exception may arise by a tangency of the hashing, at maximal $j=\overline{j}_m,\ m\geq 1$, where $\omega_{m,j}^-=\min \omega_{m,j}^+\,.$ 
From lemma \ref{lem:2.5}, and in particular from \eqref{eq:2.37}, \eqref{eq:2.40} we also recall the boundary values
for the functions $\omega^{\pm}(\Omega)$ at $\Omega = \underline{\Omega}_m$ and $\Omega = 0$.
Let us keep in mind how $\omega_{m,j}^\pm$ come with their shifted and slow variants
	\begin{equation}
	\tilde{\omega}_{m,j}^\pm,\ 
	\tilde{\Omega}_{m,j}^\pm,\
	\Omega_{m,j}^\pm
	\label{eq:3.43}
	\end{equation}
as an entourage; see \eqref{eq:3.36}.
These also define the control parameters
	\begin{equation}
	B =B_{m,j}^\pm =(-1)^k\cos^2(\tfrac{\pi}{2}\tilde{\Omega}_{m,j}^\pm)/
	\cos \omega_{m,j}^\pm
	\label{eq:3.44}
	\end{equation}
where the nontrivial, control-induced Hopf bifurcations with eigenvalues $\mu = \pm i \tilde{\omega}_{m,j}^\pm$ actually occur.

\begin{thm}
With the above notation, the following holds, at $\varepsilon= \omega_k^{-1}$.

(i) The values $B_{0,j}^\pm$ of the control parameter enumerate all nontrivial Hopf bifurcations with eigenvalues $\mu = \pm i\tilde{\omega}$ of frequencies $0<\tilde{\omega}<\omega_k=(k+\tfrac{1}{2})\pi$.

(ii) The values $B_{m,j}^\pm$ with $m\geq 1$ enumerate all nontrivial Hopf bifurcations with eigenvalue frequencies $\tilde{\omega} > \omega_k$ and strictly negative control parameter $B<0$.

(iii) All enumerated Hopf eigenvalues are algebraically simple.

(iv) The local continuations $\mu = \mu_{m,j}^-(B)$ of all enumerated Hopf eigenvalues $i\omega_{m,j}^- = \mu_{m,j}^-(B_{m,j}^-)$ cross the imaginary axis transversely with
	\begin{equation}
	\frac{d}{dB}\mathrm{Re}\, \mu_{m,j}^-(B) > 0
	\label{eq:3.45}
	\end{equation}
at $B=B_{m,j}^-$.
At $B = \max B_{m,j}^+\,,$ generated by the frequencies $\omega_{m,j}^+\,,$ that unstable eigenvalue recovers stability, at the latest.
\label{thm:3.4}
\end{thm}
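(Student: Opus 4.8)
The plan is to prove the four assertions in turn, reducing each to the $\varepsilon$-independent 2-scale description of section~\ref{sec:2}, the hashing of lemma~\ref{lem:2.3}, and the vanishing-control analysis of proposition~\ref{prop:3.2}. For the enumeration (i), (ii) I would first record that, at the fixed reference value $\varepsilon=\omega_k^{-1}$, a purely imaginary eigenvalue $\mu=i\tilde\omega$ of $\psi=0$ is, by lemmata~\ref{lem:2.3} and \ref{lem:2.4}, precisely a solution of the reduced system \eqref{eq:2.27}--\eqref{eq:2.28} in $(\omega,\Omega,B)$ subject to the hashing \eqref{eq:3.36}. The discriminant condition \eqref{eq:2.31}, \eqref{eq:2.34} forces $|\tilde\Omega\cos(\tfrac{\pi}{2}\tilde\Omega)|\le 1$, which confines $\tilde\Omega$ to the intervals $(\underline{\tilde\Omega}_m,\tilde\Omega_m^{\max})$ around the odd resonances $\Omega_m=2m+1$ and thereby assigns the index $m$; hashing \eqref{eq:3.36} then selects, on each branch $\omega=\omega^\pm(\Omega)$ of lemma~\ref{lem:2.5}, the discrete frequencies $\omega_{m,j}^\pm$ of \eqref{eq:3.40}, assigning $j$. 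Since $\tilde\omega<\omega_k\Leftrightarrow\tilde\Omega<1$ occurs precisely for $m=0$ (where $0<\tilde\Omega\le1$), this yields (i); and $\tilde\omega>\omega_k\Leftrightarrow\tilde\Omega>1$ with $B<0$ corresponds, by the sign table \eqref{eq:2.33}, to the branches $m\ge1$, $B^\pm<0$, yielding (ii). The remaining $B>0$ case of \eqref{eq:2.33} is excluded here and discarded later in corollary~\ref{cor:3.6}.

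For simplicity (iii) I would continue each enumerated eigenvalue from a degenerate limit. For $m=0$, proposition~\ref{prop:3.2}(iii) supplies algebraically simple eigenvalues at vanishing control $B=\pm\infty$, one per strip, and the trapping lemma~\ref{lem:3.1}(i) confines each to its strip, so by standard complex analysis (Rouché) it stays simple as $B$ varies. For $m\ge1$ the relevant eigenvalues emerge, as $B\to0$, from the simple zeros $\varepsilon\mu=i\Omega_m$ of the entire function $1+\exp(-\pi\varepsilon\mu)$ exhibited in the proof of lemma~\ref{lem:3.1}(iii); again trapping forbids collisions, so simplicity persists along the $B$-continuation. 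The trivial eigenvalue $i\omega_k$ is simple by lemma~\ref{lem:3.1}(ii).

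The crossing statement (iv) is the crux. Rather than evaluate $\psi_\mu$ head-on, I would use the clean Jacobian \eqref{eq:3.31} of proposition~\ref{prop:3.3}. Writing $\mu=\mu(\varepsilon,B)$ by the implicit function theorem at a simple eigenvalue, one verifies the orientation identity
\[
\det\frac{\partial(\mu_R,\tilde\omega)}{\partial(\varepsilon,B)}=\frac{\det\psi_{(\varepsilon,B)}}{|\psi_\mu|^{2}},
\]
so the sign of $\det\psi_{(\varepsilon,B)}$ decides whether $(\varepsilon,B)\mapsto(\mu_R,\tilde\omega)$ preserves orientation. By \eqref{eq:3.31}, $B+\tfrac{\pi}{2}>0$ (lemma~\ref{lem:2.6} gives $B>-1$, resp.\ $B>0$), $\tilde\omega>0$ and $\cos^2(\tfrac{\pi}{2}\tilde\Omega)>0$ in the open interior of the branches, this determinant equals $-\operatorname{sign}(B)$ in sign and is nonzero; hence the crossing is transverse. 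To fix the sign as $\tfrac{d}{dB}\operatorname{Re}\mu_{m,j}^->0$ at $B_{m,j}^-$ I would anchor the orientation: along the strictly monotone branch $\omega=\omega^-(\Omega)$ (lemmata~\ref{lem:2.5}, \ref{lem:2.6}) hashing \eqref{eq:3.36} fixes the slope $dB/d\varepsilon$ of the Hopf curve, while the global continuation of proposition~\ref{prop:3.2}, rooted in the reference sign $\partial_\varepsilon\mu_R<0$ from \eqref{eq:3.22}, fixes which side of the curve is unstable. Combining orientation, slope and reference sign yields the destabilizing crossing at $B_{m,j}^-$; the matching crossing $B_{m,j}^+$ on the branch $\omega=\omega^+(\Omega)$ then restores stability, the maximum $\max B_{m,j}^+$ absorbing the possibly non-transverse, multiple intersections of $\omega^+(\Omega)$ with the near-vertical hashing lines near $\Omega\gtrsim\underline{\Omega}_m$.

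The principal difficulty lies in this sign- and orientation-bookkeeping of (iv). Converting the computed parameter-Jacobian into the correct $B$-direction demands that the branch slope $dB/d\varepsilon$, the orientation of increasing $\tilde\omega$ along the Hopf curve, and the reference crossing be tracked consistently, and that the sign of $B$ itself (so that $\det\psi_{(\varepsilon,B)}=-\operatorname{sign}(B)$ and the resulting unstable intervals differ for $m=0$ and $m\ge1$, in accordance with the sign of $\tilde\Omega^{2}-1$) be handled uniformly. All of this degenerates exactly at the endpoints $\tilde\Omega\in\{1,\Omega_m\}$, where $\cos^2(\tfrac{\pi}{2}\tilde\Omega)=0$ and \eqref{eq:3.31} vanishes --- the non-crossing trivial pair and the $B^\pm=0$ termini --- so the argument must stay in the open interior and reach the endpoints only by continuity. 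The intersection-set subtlety $\omega\in\omega_{m,j}^+$, as opposed to single points, is a genuine secondary complication that forces the ``at the latest'' phrasing and the $\max$ in the statement.
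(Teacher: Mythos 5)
Your treatment of (i), (ii) and of the crossing direction (iv) essentially tracks the paper: enumeration via lemma~\ref{lem:2.4}, the discriminant condition and hashing, and, for (iv), the orientation identity $\det\psi_\mu\cdot\det\mu_{(\varepsilon,B)}=\det\psi_{(\varepsilon,B)}$ combined with the Jacobian \eqref{eq:3.31}, the bound $B>-1$ of lemma~\ref{lem:2.6}, and the sign of $\varepsilon_\omega$ along the branches $\omega^\pm(\Omega)$ (your extra ``anchor'' via \eqref{eq:3.22} is redundant, since orientation preservation alone determines which side of the Hopf curve is unstable). The genuine gap is in (iii). The paper proves algebraic simplicity by a pointwise computation at the Hopf point itself: substituting \eqref{eq:2.28} into \eqref{eq:3.28} yields $\mathrm{Re}\,\psi_\mu=\varepsilon(\tfrac{\pi}{2}-B)+(1-\pi\varepsilon)\cos^2(\tfrac{\pi}{2}\tilde{\Omega})>0$, see \eqref{eq:3.46}, using the bound $B\le\tfrac{\pi}{2}$ from lemma~\ref{lem:2.6}; this excludes a double root outright. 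Your continuation/Rouch\'e argument cannot deliver this conclusion. Trapping (lemma~\ref{lem:3.1}) constrains only eigenvalues with $\mu_R\ge 0$ (and $B<0$); it does not prevent a second eigenvalue from approaching the same point of the imaginary axis out of the stable half-plane $\mu_R<0$ and colliding with the tracked eigenvalue exactly at the Hopf value $B^\pm_{m,j}$ --- which is precisely where simplicity is asserted. So ``trapping forbids collisions'' is not a valid step.

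Moreover, for $m\ge1$ your proposed anchor at $B\to0^-$ covers almost none of the enumerated eigenvalues. By the paper's own structure (corollary~\ref{cor:3.7}, fig.~\ref{fig:3.2}), only for odd $m$ and $j=1$ does an unstable eigenvalue persist down to $B\to0^-$; all other $(m,j)$ eigenvalues have been restabilized at some $B^+_{m,j}<0$, hence lie in the stable half-plane for $B$ near $0$, where lemma~\ref{lem:3.1}(iii) (which assumes $\mu_{R,n}\ge0$) gives no information and no limit point $i\Omega_m/\varepsilon$ is available to start the continuation. Anchoring those eigenvalues ``inside'' their instability intervals $I_{m,j}=(B^-_{m,j},B^+_{m,j})$ would presuppose the simple, transverse Hopf crossing at $B^-_{m,j}$ that you are in the process of proving, so the argument becomes circular. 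Finally, since your step (iv) invokes the implicit function theorem at these eigenvalues, i.e.\ $\psi_\mu\neq0$, the gap in (iii) leaves (iv) without foundation as well; the pointwise estimate \eqref{eq:3.46}, or an equivalent direct verification that $\psi_\mu\neq0$ on the whole enumerated set, is the missing ingredient.
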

\begin{proof}[\textbf{Proof.}]
The proof of claims~(i) and (ii) follows from our detailed analysis of the characteristic equation $\chi_0=0$ in section~\ref{sec:2}; see in particular lemma~\ref{lem:2.4}.

To prove simplicity of Hopf eigenvalues, (iii), we partially substitute the explicit expression \eqref{eq:2.28} for $B$ into $\psi_\mu$ of \eqref{eq:3.28}, and take real parts:
	\begin{equation}
	\begin{aligned}
	\text{Re}\, \psi_\mu &=
	-\varepsilon B+(-1)^kB\cos\tilde{\omega}-
	\tfrac{\pi}{2}\varepsilon\cos(2\tfrac{\pi}{2}\tilde{\Omega})=\\
	&=
	-\varepsilon B+\cos^2(\tfrac{\pi}{2}\tilde{\Omega})-
	\tfrac{\pi}{2}\varepsilon(2\cos^2(\tfrac{\pi}{2}\tilde{\Omega})-1)=\\
	&=
	\varepsilon(\tfrac{\pi}{2}-B)+
	(1-\pi\varepsilon)\cos^2(\tfrac{\pi}{2}\tilde{\Omega})>0	
	\end{aligned}
	\label{eq:3.46}
	\end{equation}
at $\pi\varepsilon=\pi\omega_k^{-1}=1/(k+\tfrac{1}{2}),\ k\geq 1$, and for all enumerated $B$.
Indeed, by lemma~\ref{lem:2.6}, the only exception to $B\leq \tfrac{\pi}{2}$ arises for $B=\tfrac{\pi}{2}$ at the excluded trivial eigenvalue with frequency $\tilde{\omega}=\tilde{\Omega}/\varepsilon=1/\varepsilon =\omega_k$.
This proves simplicity claim~(iii).

\begin{figure}[t!]
\centering \includegraphics[width=\textwidth]{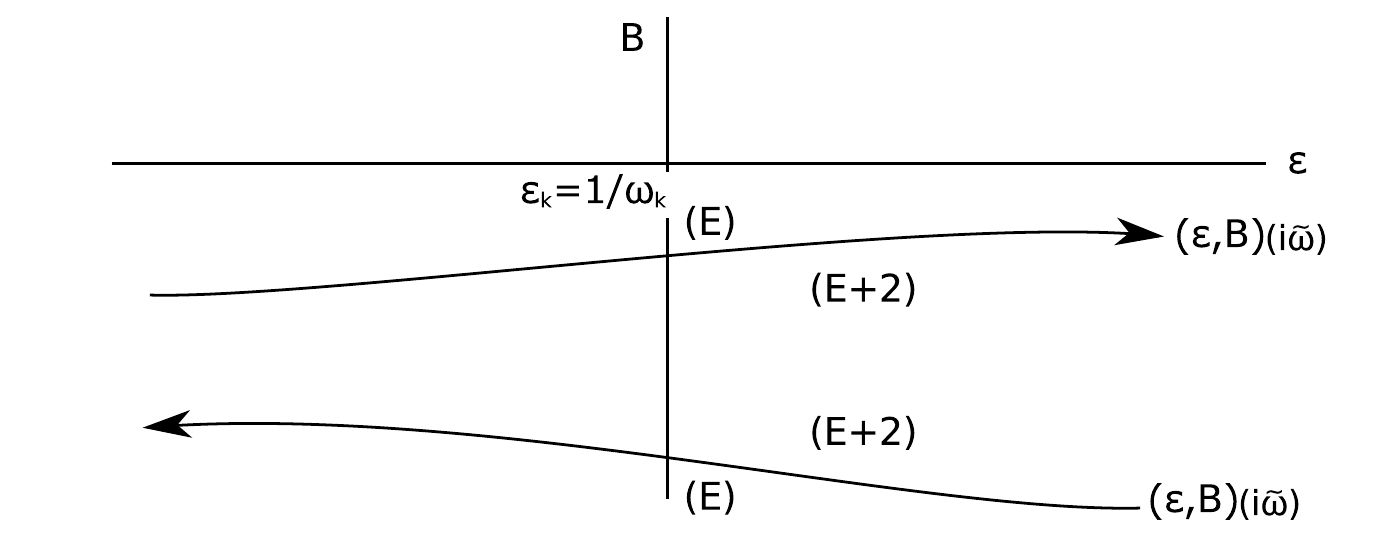}
\caption{\emph{
Hopf curves $\tilde{\omega} \mapsto (\varepsilon(i\tilde{\omega}), B(i\tilde{\omega}))$, oriented along increasing $\tilde{\omega}$.
Note the resulting unstable dimensions $E$, in parantheses, to the left, and $E+2$ to the right, of the Hopf curves.
}}
\label{fig:3.3}
\end{figure}
%\hfill

For $m\geq1$, our proof of the remaining crossing and (de)stabilization claims~(iv) will be based on the following three ingredients.
We will first invoke the implicit function theorem to show that the local continuation map
	\begin{equation}
	(\varepsilon, B) \mapsto \mu=\mu(\varepsilon,B)
	\label{eq:3.47}
	\end{equation}
is an orientation preserving diffeomorphism, near $\varepsilon=\omega_k^{-1}$ and the enumerated Hopf eigenvalues $\mu=i\omega_{m,j}^\pm$ at $B=B_{m,j}^\pm$.
In a second step we will then show
	\begin{equation}
	\varepsilon_\omega(i\omega_{m,j}^-)<0
	\label{eq:3.48}
	\end{equation}
for the partial derivatives, with respect to $\omega$, of the local inverse function $(\varepsilon,B)=(\varepsilon(\mu), B(\mu))$ to \eqref{eq:3.47}, at $\mu=i\omega_{m,j}^-$. The third ingredient describes the necessary adaptations at $\omega_{m,j}^+$.

We first show how claims \eqref{eq:3.47} and \eqref{eq:3.48} imply the crossing direction
\begin{equation}
	\text{Re}\, \mu_B>0\,,
	\label{eq:3.49}
	\end{equation}
for the partial derivative of $\mu(\varepsilon, B)$ with respect to $B$ at $\mu=i\omega_{m,j}^-$.
Indeed consider the oriented Hopf curve  $\tilde{\omega} \rightarrow(\varepsilon(i\tilde{\omega}), B(i\tilde{\omega}))$ in the $(\varepsilon,B)$ plane.
See fig. \ref{fig:3.3}.
By the orientation preserving transformation \eqref{eq:3.47}, the region $\text{Re}\, \mu<0$ lies to the left of the Hopf curve.
By \eqref{eq:3.48}, the tangent to the Hopf curve at $\omega=\omega_{m,j}^-$ points strictly to the left of the vertical $B$-axis at the fixed value $\varepsilon=\varepsilon_k$.
Therefore, the $B$-axis crosses the Hopf curve transversely, at $\omega=\omega_{m,j}^-$, and into the unstable region Re $\mu>0$, for increasing $B$.
Thus the diffeomorphism \eqref{eq:3.47} implies the crossing direction \eqref{eq:3.49}.

The cases $B=B_{m,j}^+$ can be treated analogously, with a little extra care.
In the case of a single transverse crossing of the hashing line $(m,j)$ with the upper branch $\omega=\omega^+(\Omega)$, at $\omega=\omega_{m,j}^+$, we now have $\varepsilon_\omega(i\omega_{m,j}^+)>0$. 
Therefore, the tangent to the Hopf curve at $\omega=\omega_{m,j}^+$ now points strictly to the right of the vertical $B$-axis at the fixed value $\varepsilon=\varepsilon_k$.
The previous arguments then show $\text{Re}\, \mu_B<0,$ i.e. stabilization towards increasing $B$.
In case of multiple crossings, possibly involving tangents, we can prove stabilization from the last crossing (or tangency) onwards, as claimed in (iv), via generic approximation by an odd number of transverse crossings of the hashing line with the upper branch $\omega=\omega^+(\Omega)$. 
Put simply, destabilization occurs whenever the increasing hashing lines in the top rows of fig. \ref{fig:3.2} enter the interior region of the 2-scale relation $\Omega = \Omega(\delta,\omega)$, and stabilization ensues as soon as the hashing lines leave towards the exterior region; see lemma \ref{lem:2.5}.

To prove claim~(iv) for $m\geq1$ it therefore remains to verify claims \eqref{eq:3.47} and \eqref{eq:3.48}.
We will address the analogous, but simpler, case $m=0$ at the end of the proof.

To verify the orientation claim \eqref{eq:3.47} we invoke the implicit function theorem for $\psi(\mu,\varepsilon, B)=0$; see \eqref{eq:3.3}, \eqref{eq:3.27}.
Indeed the Jacobian determinants, which determine the local orientations, satisfy
	\begin{equation}
	\det \psi_\mu\cdot \det\mu_{(\varepsilon,B)}=
	\det(-\psi_{(\varepsilon,B)})=
	\det(\psi_{(\varepsilon,B)})\,.
	\label{eq:3.50}
	\end{equation}
Our enumeration of cases $B=B_{m,j}^\pm$ for $m\geq 1$ above has skipped any positive $B^\pm$ of lemma~\ref{lem:2.4}, \eqref{eq:2.33}.
By lemma \ref{lem:2.6} we know $-1<B^-\leq0$.
Therefore proposition~\ref{prop:3.3}, \eqref{eq:3.31} asserts strict positivity of $\det (\psi_{(\varepsilon, B)})$ in \eqref{eq:3.50}.
The enumerated Hopf eigenvalues $\mu=i\omega_{m,j}^\pm$ are simple zeros of the complex analytic characteristic function $\psi\in \mathbb{C}$, by claim~(iii).
Therefore $\det\psi_\mu$ on the left is also strictly positive, by the Cauchy-Riemann equations.
This proves strict positivity of $\det\mu_{(\varepsilon,B)}$ and establishes the orientation claim \eqref{eq:3.47}.

To determine the signs of the tangent partial derivatives $\varepsilon_\omega$, as claimed in \eqref{eq:3.48}, we recall the definition
	\begin{equation}
	\varepsilon = \tilde{\Omega}/\tilde{\omega} =
	\tilde{\Omega}(\tilde{\omega})/\tilde{\omega}\,,
	\label{eq:3.51}
	\end{equation}
where $\tilde{\Omega}(\tilde{\omega})=\Omega_m+\Omega(\omega)$ follows from the $\varepsilon$-independent characteristic equation $\chi_0(\delta, \omega, \Omega, B)=0$ at $\omega \equiv \tilde{\omega}\pmod{2\pi}$, for fixed $\delta= \Omega_m^{-1}$.
See \eqref{eq:3.37}, \eqref{eq:3.38} and lemmata~\ref{lem:2.4}, \ref{lem:2.5}.
Straightforward differentiation of \eqref{eq:3.51} with respect to $\tilde{\omega}>0$ or $\omega$ yields
	\begin{equation}
	\begin{aligned}
	\varepsilon_\omega =
	(\dot{\Omega}-\tilde{\Omega}/\tilde{\omega})/\tilde{\omega}
	=(\dot{\Omega}-\varepsilon)/\tilde{\omega}\,,
	\end{aligned}
	\label{eq:3.52}
	\end{equation}
in the notation of lemma~\ref{lem:2.5}, where $\dot{\Omega} =d\Omega/d\omega=d\tilde{\Omega}/d\tilde{\omega}$.
The definition of $\omega_{m,j}^-$ as the unique intersection of the hashing line $(m,j)$ with the lower branch $\omega=\omega^-(\Omega)$ in \eqref{eq:3.36},  and \eqref{eq:3.40}, \eqref{eq:3.40a} show that \eqref{eq:3.52} implies the sign of $\varepsilon_\omega$ claimed in \eqref{eq:3.48}.
See the two top rows of fig. \ref{fig:3.2}. The case of a single transverse crossing at $\omega=\omega_{m,j}^+$ leads to $\varepsilon_\omega(i\omega_{m,j}^+)>0,$ analogously.
The required adaptations for multiple and/or non-transverse crossings have been described above.

It remains to address the case $m=0,\ 0<\tilde{\Omega}<1$.
Consider $\omega_{0,j}^+,\ B_{0,j}^+<0$ first; see lemmata~\ref{lem:2.4} -- \ref{lem:2.6}.
Here each crossing $\omega=\omega_{m,j}^\pm$ is transverse and unique. 
Transformation \eqref{eq:3.47} remains orientation preserving, by \eqref{eq:3.31} and \eqref{eq:3.50}, verbatim as for $m\geq 1$.
Furthermore \eqref{eq:3.52} implies $\varepsilon_\omega >0$, for small enough $0<\varepsilon<\varepsilon_0$, by an upper bound on the positive derivatives
	\begin{equation}
	0<1/\dot{\Omega}=\frac{d}{d\Omega} \omega^\pm (\Omega) < 1/\varepsilon_0
	\label{eq:3.53}
	\end{equation}
in lemma~\ref{lem:2.5}.
This shows claim~(iv), \eqref{eq:3.45} at $B=B_{0,j}^+<0$, for $m=0$.
To show claim~(iv), \eqref{eq:3.45} at $B=B_{0,j}^->0$, for $m=0$, we first note that \eqref{eq:3.31}, \eqref{eq:3.50} now imply orientation reversal in \eqref{eq:3.47}, because
	\begin{equation}
	\det \psi_{(\varepsilon,B)} <0\,.
	\label{eq:3.54}
	\end{equation}
The argument \eqref{eq:3.53}, however, remains intact at $\omega^-(\Omega)$.
This shows how the sign reversal claimed in~(iv), \eqref{eq:3.45} remains valid for $m=0$, proving the lemma.
\end{proof}

Figs.~\ref{fig:3.1} and \ref{fig:3.2} already summarized our results, so far, separately for $m=0$ and for $m\geq 1,\ B<0$.
Consider the case $m=0$ first, i.e. slow Hopf frequencies $0<\tilde{\Omega} <\Omega_0=1$, alias $-1<\Omega=\tilde{\Omega}-\Omega<0$.
The branch $\omega = \omega^+(\Omega)$ of Hopf frequencies $\omega\equiv \tilde{\omega} \pmod{2\pi}$ provides
	\begin{equation}
	k'=[k/2]
	\label{eq:3.55}
	\end{equation}
Hopf bifurcations at control parameters $B= B_{0,j}^+,\ j=1,\ldots,k'$.
Hashing and strong monotonicity of $B=B^+(\Omega)$, lemma~\ref{lem:2.6}, imply
	\begin{equation}
	\begin{aligned}
	-1<\Omega_{0,k'}^+&< \ldots <
	\Omega_{0,2}^+<\Omega_{0,1}^+<1\,;\\
	-1<B_{0,k'}^+ &<\ldots <B_{0,2}^+<B_{0,1}^+<0\,.
	\end{aligned}
	\label{eq:3.56}
	\end{equation}
By lemma~\ref{lem:3.1}, unstable eigenvalues $\mu=\mu_R+i\tilde{\omega}$ cannot cross any of the lines $\tilde{\omega}\equiv \tfrac{\pi}{2}\pmod{\pi}$, for $B<0$.
By proposition~\ref{prop:3.2}, each of the $k'=[k/2]$ resulting strips
	\begin{equation}
	\text{Re}\, \mu>0\,,\qquad 0<\omega_k-2j\pi<\tilde{\omega}<\omega_k
	-2j\pi+\pi\,,
	\label{eq:3.57}
	\end{equation}
$j=1,\ldots,k'$, contains exactly one simple eigenvalue $\mu_{0,j}$ inherited from $B=-\infty$.
By theorem~\ref{thm:3.4} and analytic continuation, this simple eigenvalue persists as $B$ increases, until it disappears into $\text{Re}\, \mu<0$ by simple transverse Hopf bifurcation at
	\begin{equation}
	B=B_{0,j}^+\,,\qquad \tilde{\omega}= \omega_{0,j}^+\,.
	\label{eq:3.58}
	\end{equation}
Indeed $\tilde{\omega}=\omega_{0,j}^+$ belongs to the same strip \eqref{eq:3.57}, for each $j=1,\ldots , k'$.
For even $k=2k'$, this eliminates all unstable eigenvalues generated at $B=-\infty$, once
	\begin{equation}
	B_{0,1}^+<B<0\,.
	\label{eq:3.59}
	\end{equation}
For odd $k=2k'+1$, the same statement remains true, because $-1<B<0$ renders the additional real eigenvalue stable; see lemma~\ref{lem:2.1} and corollary~\ref{cor:2.2}.
Note $-1<B_{0,1}^+$ here, by lemma~\ref{lem:2.6}; see also fig.~\ref{fig:3.1}.
These remarks prove the following corollary.

\begin{cor}
Let $\varepsilon=1/\omega_k$ and assume $B<0$.
Then $\mu_R<0$ for any eigenvalue $\mu=\mu_R +i\tilde{\omega}$ with $0\leq \tilde{\omega}<\omega_k$, if and only if \eqref{eq:3.59} holds.
\label{cor:3.5}
\end{cor}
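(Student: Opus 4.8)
The plan is to track all eigenvalues whose frequency lies in the slow range $0 \le \tilde{\omega} < \omega_k$, i.e.\ the case $m=0$ with $0 < \tilde{\Omega} < 1$, as the control $B$ increases monotonically from $B = -\infty$ (vanishing control) up to $B = 0^-$, and to pin down $B_{0,1}^+$ as the precise threshold past which all of them have crossed to the stable side. First I would count the eigenvalues present at $B = -\infty$: by proposition~\ref{prop:3.2}(iii) the strip region contains exactly $k' = [k/2]$ algebraically simple complex eigenvalues $\mu_{0,j}$, one per strip \eqref{eq:3.57}, all with $\mathrm{Re}\,\mu > 0$; for odd $k$ there is in addition a single simple positive real eigenvalue by proposition~\ref{prop:3.2}(iv) and lemma~\ref{lem:2.1}, whereas for even $k$ no real eigenvalue occurs.

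Next I would show these eigenvalues are trapped and cross exactly once. The trapping lemma~\ref{lem:3.1}(i) forbids any eigenvalue with $\mathrm{Re}\,\mu \ge 0$ from reaching the strip boundaries $\tilde{\omega} \equiv \tfrac{\pi}{2} \pmod{\pi}$ while $B < 0$, so each $\mu_{0,j}(B)$ stays simple in its own strip under continuation in $B$, and by lemma~\ref{lem:3.1}(iii) no eigenvalue escapes to $\mathrm{Re}\,\mu = +\infty$. By the crossing theorem~\ref{thm:3.4}(iv) the only imaginary crossings available for $B < 0$ are the stabilizing Hopf values $B_{0,j}^+ < 0$; here I would stress that for $m = 0$ the companion values $B^-$ are strictly \emph{positive} by lemma~\ref{lem:2.4}, \eqref{eq:2.33}, so no destabilizing crossing lies in $B < 0$ at all. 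Hence every $\mu_{0,j}$ makes exactly one transverse passage into $\mathrm{Re}\,\mu < 0$, at $B = B_{0,j}^+$, and cannot return for larger $B < 0$.

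It then remains to locate the threshold and handle the real eigenvalue. The strict monotonicity of $B^+(\Omega)$ in lemma~\ref{lem:2.6}, together with the hashing ordering \eqref{eq:3.56}, yields $-1 < B_{0,k'}^+ < \cdots < B_{0,1}^+ < 0$, so $B_{0,1}^+$ is the \emph{last} (largest) stabilization. Consequently all $k'$ complex eigenvalues are simultaneously stable precisely when $B > B_{0,1}^+$, giving the ``if'' direction of \eqref{eq:3.59}; conversely, for $B \le B_{0,1}^+$ the $j=1$ eigenvalue has not yet crossed and remains unstable, giving ``only if''. For odd $k$ I would dispose of the extra real eigenvalue via lemma~\ref{lem:2.1} and corollary~\ref{cor:2.2}: it is stable exactly for $-1 < B < 0$, and since lemma~\ref{lem:2.6} guarantees $B_{0,1}^+ > -1$, the interval $B_{0,1}^+ < B < 0$ automatically enforces $-1 < B < 0$, so the real eigenvalue imposes no new constraint.

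The main obstacle I anticipate is the global, one-directional bookkeeping rather than any single computation: one must be certain that no eigenvalue re-destabilizes between its crossing at $B_{0,j}^+$ and $B = 0^-$, and that the initial count is neither inflated nor deflated under continuation. This is exactly where the sign information $B^- > 0$ for $m = 0$, the transversality and orientation established in theorem~\ref{thm:3.4}, the boundedness of lemma~\ref{lem:3.1}(iii), and the monotone ordering \eqref{eq:3.56} must be combined; keeping the ``exactly one crossing per strip'' picture rigorous is the delicate step, while the remaining arithmetic is routine.
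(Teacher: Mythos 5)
Your proposal is correct and follows essentially the same route as the paper: counting the $k'=[k/2]$ unstable eigenvalues inherited from $B=-\infty$ via proposition~\ref{prop:3.2}, confining them to their strips by the trapping lemma~\ref{lem:3.1}, stabilizing each one exactly once at $B_{0,j}^+$ via theorem~\ref{thm:3.4} with the ordering \eqref{eq:3.56}, and disposing of the extra real eigenvalue for odd $k$ through lemma~\ref{lem:2.1}, corollary~\ref{cor:2.2}, and the bound $B_{0,1}^+>-1$ from lemma~\ref{lem:2.6}. The only additions you make — explicitly invoking $B^{-}>0$ for $m=0$ from \eqref{eq:2.33} and lemma~\ref{lem:3.1}(iii) against escape to $\mathrm{Re}\,\mu=+\infty$ — are details the paper leaves implicit, not a different argument.
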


We study $B>0$ next.
In corollary~\ref{cor:2.2} we have already observed instability, by parity due to the presence of a real eigenvalue $\mu>0$, in case $k$ was odd.
Let us therefore consider even $k=2k'$.
At $\varepsilon=1/\omega_k$ and $B=+\infty$ we encounter the same $k'$ unstable simple complex eigenvalues $\mu_{0,j}\,,$ one in each of the $k'$ strips \eqref{eq:3.57}, as before.
This time, however, only $k'-1$ simple transverse Hopf bifurcations at $B=B_{0,j}^->0$ offer their assistance for stabilization by decreasing $B>0$.
Indeed $B_{0,j}^-$ cancels the instability of $\mu_{0,j+1}$, for $j=1, \ldots ,k'-1$, but $\mu_{0,k'}$ remains unstable for all $B>0$.
This proves the following corollary.

\begin{cor}
Let $\varepsilon =1/\omega_k$ and assume $B>0$.
Then there exists an unstable eigenvalue $\mu$, i.e. $\mathrm{Re}\, \mu>0$.
For odd $k$, the unstable eigenvalue can be taken to be real.
For even $k \geq 2$, the unstable eigenvalue $\mu = \mu_R +i\tilde{\omega}$ can be taken to be strictly complex with
	\begin{equation}
	0<\omega_k-2\pi<\tilde{\omega}<\omega_k-\pi\,.
	\label{eq:3.60}
	\end{equation}
In particular, there does not exist any region of Pyragas stabilization (near Hopf bifurcation) for control parameters $B>0$.
\label{cor:3.6}
\end{cor}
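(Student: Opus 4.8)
The plan is to establish, for every control value $B>0$, that the unstable dimension satisfies $E(B)>0$; since any Pyragas region requires $E=0$, this rules out stabilization on the positive axis. I would treat the two parities of $k$ separately. For odd $k$ the argument is purely parity-theoretic: by Corollary~\ref{cor:2.2}, \eqref{eq:2.18}, one has $E(B)\equiv 1\pmod 2$ for all $0\le B<\infty$. Because strictly complex eigenvalues occur in conjugate pairs and hence contribute evenly to $E(B)$, odd parity forces at least one \emph{real} unstable eigenvalue $\mu=\mu_R>0$; concretely this is the eigenvalue produced by the vanishing denominator in \eqref{eq:2.16}, i.e. the positive root of \eqref{eq:3.24} continued via Lemma~\ref{lem:2.1}. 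This settles the odd case, including the assertion that the unstable eigenvalue may be taken real.

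For even $k=2k'$ I would run a continuation and counting argument in $B$, decreasing from vanishing control. At $B=+\infty$, Proposition~\ref{prop:3.2}(iii)--(iv) supplies exactly $k'=[k/2]$ simple unstable complex pairs $\mu_{0,j}$, one in each strip \eqref{eq:3.23} with $0<\tilde\omega<\omega_k$, and no real unstable eigenvalue. Two facts then confine the bookkeeping. First, by Lemma~\ref{lem:3.1}(ii) the line $\tilde\omega=\omega_k$ carries only the single eigenvalue $i\omega_k$ for every $B\ne0$, so no nontrivial eigenvalue can cross it and the $\mu_{0,j}$ remain in the regime $m=0$, $0<\tilde\Omega<1$. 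Second, by Lemma~\ref{lem:2.4} the only purely imaginary eigenvalues in this regime lie on the positive branch $B=B_{0,j}^->0$, and by Theorem~\ref{thm:3.4}(iv) decreasing $B$ through such a value is stabilizing, lowering $E$ by $2$.

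The decisive point is a counting deficit. On the positive branch the control value $B^-(\tilde\Omega)$, $\tilde\Omega\in(0,1)$, is pinned at its two endpoints: at $\tilde\Omega=1$ it meets the noninvasive trivial Hopf eigenvalue $\mu=i\omega_k$ (the black square in fig.~\ref{fig:3.1}), which by Lemma~\ref{lem:3.1}(ii) never leaves the imaginary axis, while at $\tilde\Omega=0$ it meets the real zero eigenvalue at $B=1$ of Lemma~\ref{lem:2.1}. Inserting the hashing \eqref{eq:3.36}, these two endpoints absorb the two extreme hashing lines, so only $k'-1$ of the $k'$ strips produce a genuine transverse complex crossing $B_{0,j}^-$. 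Hence at least one complex pair never crosses the imaginary axis and stays unstable for all $B>0$; the surviving pair occupies the top strip $(\omega_k-2\pi,\omega_k-\pi)$ of \eqref{eq:3.60}. As a complementary safeguard on the range $0<B\le B_{\max}$, where a complex pair may instead merge into a positive double real eigenvalue, the real-eigenvalue analysis \eqref{eq:2.16}--\eqref{eq:2.17} together with Lemma~\ref{lem:2.1} exhibits an unstable real eigenvalue directly. These two mechanisms cover all $B>0$ and give $E(B)>0$, so no Pyragas region exists for positive control, exactly as in the mirror situation of Corollary~\ref{cor:3.5}.

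I expect the main obstacle to be the rigorous justification of the deficit. The trapping Lemma~\ref{lem:3.1}(i) is proved only for $B<0$, so for $B>0$ the inherited pairs $\mu_{0,j}$ may migrate across the strip boundaries $\tilde\omega\equiv\tfrac{\pi}{2}\pmod\pi$; one must therefore follow each pair carefully, match every genuine crossing $B_{0,j}^-$ to the pair it stabilizes, and confirm that precisely the two extreme stabilization slots are consumed by the noninvasive point $i\omega_k$ and by the zero eigenvalue at $B=1$, rather than by additional genuine crossings. The impenetrability of $\tilde\omega=\omega_k$ and the absence of positive Hopf points with $\tilde\Omega<1$ off the branch $B^-$ are what make the count $k'-1$, and hence the persistence of one unstable eigenvalue, go through.
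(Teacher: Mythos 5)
Your proposal is correct and follows essentially the same route as the paper: odd $k$ is dispatched by the parity statement of Corollary~\ref{cor:2.2}, and even $k=2k'$ by counting the $k'$ unstable pairs inherited from $B=+\infty$ against the at most $k'-1$ stabilizing crossings $B_{0,j}^->0$, the deficit arising exactly because the two extreme hashing lines are absorbed by the trivial Hopf eigenvalue $i\omega_k$ and by the zero eigenvalue at $B=(-1)^k=1$ of Lemma~\ref{lem:2.1}. The additional points you supply -- the positive real eigenvalue on $0<B\le B_{\max}$ from \eqref{eq:2.16}--\eqref{eq:2.17} as a safeguard, and the caveat that the trapping Lemma~\ref{lem:3.1}(i) is unavailable for $B>0$ so the matching of each crossing $B_{0,j}^-$ to the pair it stabilizes needs care -- are, if anything, more explicit than the paper's own one-sentence count, which asserts that pairing without further argument.
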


Henceforth we restrict attention to the remaining case $B<0$.
We fix $\varepsilon=1/\omega_k$.
All unstable real eigenvalues, or complex eigenvalues $\mu =\mu_R +i\tilde{\omega}$ with $0<\tilde{\omega} <\omega_k$ come from $B=-\infty$ and have been taken care of in corollary~\ref{cor:3.5}.
By the trapping lemma~\ref{lem:3.1} for imaginary parts, all remaining changes of stability must arise from the simple transverse Hopf bifurcations at
	\begin{equation}
	B=B_{m,j}^\pm\,,\quad m\geq 1\,,
	\label{eq:3.61}
	\end{equation}
as enumerated in theorem~\ref{thm:3.4}.
Note how the imaginary parts $\tilde{\omega} >0$ of any unstable eigenvalues $\mu=\mu_R+i\tilde{\omega}$ induced by these Hopf bifurcations are confined to the disjoint strips
	\begin{equation}
	\tilde{\omega} = \omega +2\pi(km+[(k+1)/2]+
	[(m+1)/2]-j)
	\label{eq:3.62}
	\end{equation}
where $\tfrac{\pi}{2}<\omega <3\tfrac{\pi}{2}$ for even $k$, and $-\tfrac{\pi}{2}<\omega <\tfrac{\pi}{2}$ for odd $k$.
This follows from hashing \eqref{eq:2.24}, \eqref{eq:3.40} at Hopf bifurcation frequencies $\tilde{\omega} = \tilde{\omega}_{m,j}^\pm$, and persists with instability of $\mu$, by trapping lemma~\ref{lem:3.1} of imaginary parts.
In particular, the strips are disjoint, for different $(m,j)$, and $B_{m,j}^\pm$ generate frequencies $\tilde{\omega}_{m,j}^\pm$ which remain in the same strip.
See also fig.~\ref{fig:3.2}.
This proves the following corollary.

\begin{cor}
Let $0<\varepsilon =1/\omega_k\leq\varepsilon_0$ be sufficiently small, and assume $B<0$.
Then there exists an unstable eigenvalue $\mu$, i.e. $\mathrm{Re}\, \mu >0$, if at least one of the following conditions holds:
	\begin{equation}
	\begin{aligned}
	B<B_{0,1}^+ &<0\,,\qquad \mathrm{or}\\
	B_{m,j}^-<B &< \min B_{m,j}^+\leq0,
	\end{aligned}
	\label{eq:3.63}
	\end{equation}
for some $m\geq 1$ and some $j$ enumerated in theorem~\ref{thm:3.4}.
Here we define $B_{m,1}^+:=0$ for odd $m$.
\label{cor:3.7}
\end{cor}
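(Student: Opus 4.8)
The plan is to prove the two displayed sufficient conditions separately, in each case producing a single strictly unstable eigenvalue by continuation from the uncontrolled limit $B=-\infty$ and tracking its real part across the transverse Hopf crossings catalogued in Theorem~\ref{thm:3.4}. Throughout, the trapping lemma~\ref{lem:3.1} guarantees that every eigenvalue remains in its strip \eqref{eq:3.62}, so that the strips decouple and the stability count in one strip is never disturbed by events in another; this is what makes a purely local, strip-by-strip bookkeeping of unstable eigenvalues legitimate.

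For the first condition $B<B_{0,1}^+<0$, I would follow the eigenvalue $\mu_{0,1}$ in the strip $j=1$ of frequencies $0<\tilde{\omega}<\omega_k$. By Proposition~\ref{prop:3.2}(iii) this eigenvalue is present and strictly unstable already at $B=-\infty$, and by the $m=0$ analysis preceding \eqref{eq:3.58} it stays simple and unstable, for increasing $B<0$, until it crosses transversely to the stable side exactly at $B=B_{0,1}^+$. Hence $\mathrm{Re}\,\mu_{0,1}>0$ for every $B<B_{0,1}^+$, which is precisely the claim. Equivalently, one may quote the ``only if'' half of Corollary~\ref{cor:3.5}: since $B<B_{0,1}^+<0$ violates \eqref{eq:3.59}, some eigenvalue with $0\le\tilde{\omega}<\omega_k$ must have nonnegative real part, and transversality at $B_{0,1}^+$ upgrades this to strict instability.

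For the second condition, I fix $m\geq1$ and an admissible $j$ and track the eigenvalue confined to the single strip $(m,j)$. By Proposition~\ref{prop:3.2}(iii) there is no unstable eigenvalue in this strip (where $\tilde{\omega}>\omega_k$) at $B=-\infty$, and by Lemma~\ref{lem:2.4} the only Hopf values there are $B_{m,j}^-$ and $B_{m,j}^+$, ordered $B_{m,j}^-<B_{m,j}^+<0$ by \eqref{eq:2.11}. As $B$ increases from $-\infty$, the eigenvalue therefore stays stable until $B=B_{m,j}^-$, where it crosses the imaginary axis transversely into instability, $\frac{d}{dB}\mathrm{Re}\,\mu_{m,j}^-(B)>0$, by Theorem~\ref{thm:3.4}(iv) and \eqref{eq:3.45}. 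The next crossing, for increasing $B$, is the first re-stabilization at $\min B_{m,j}^+$; hence the eigenvalue is strictly unstable throughout $B_{m,j}^-<B<\min B_{m,j}^+$, as claimed. The convention $B_{m,1}^+:=0$ for odd $m$ simply encodes the non-crossing case of Lemma~\ref{lem:2.6} and fig.~\ref{fig:3.2} (the $\diamond$ point at $B=0$), where the upper branch never re-stabilizes at finite control, so the whole interval $(B_{m,1}^-,0)$ remains unstable.

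The main obstacle is the set-valued nature of $B_{m,j}^+$ when the near-vertical hashing line meets the increasing upper branch $\omega^+(\Omega)$ in several points or tangentially (Lemma~\ref{lem:2.5}). This is exactly why the corollary is phrased with $\min B_{m,j}^+$: the interval in question lies strictly below all upper-branch crossings, so the eigenvalue, once destabilized at the unambiguous single transverse crossing $B_{m,j}^-$ on the lower branch, cannot possibly have re-stabilized yet. Making ``no re-stabilization before $\min B_{m,j}^+$'' fully rigorous would, as in the proof of Theorem~\ref{thm:3.4}(iv), proceed by the generic-approximation argument that replaces a multiple or tangential upper crossing by an odd number of transverse ones and then tracks the unstable dimension $E(B)$ by parity across crossings.
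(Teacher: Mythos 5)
Your proposal is correct and follows essentially the same route as the paper: the first condition is exactly the ``only if'' direction of corollary~\ref{cor:3.5} (the inherited eigenvalue $\mu_{0,1}$ from $B=-\infty$ is not yet stabilized for $B<B_{0,1}^+$), and the second condition is the paper's strip-by-strip bookkeeping via the trapping lemma~\ref{lem:3.1} and the transverse crossings of theorem~\ref{thm:3.4}, including the convention $B_{m,1}^+:=0$ for odd $m$ and the role of $\min B_{m,j}^+$ in the set-valued case.
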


We have implicitly excepted hashing tangencies $\Omega_{m,j}^-= \min \tilde{\Omega}_{m,j}^+$ in equation \eqref{eq:2.63}.
Indeed this case corresponds to a nontransverse Hopf point, at (scaled) frequency $\tilde{\Omega}_{m,j}^- = \min \tilde{\Omega}_{m,j}^+$, from the stable side. 
This does not contribute to the strict unstable dimension $E(B)$.

Hence we may assume $\tilde{\Omega}_{m,j}^- < \tilde{\Omega}_{m,j}^+$.
Then hashing \eqref{eq:3.40} implies $B_{m,j}^-<B_{m,j}^+$.
Indeed this follows from strong monotonicity of $B^+(\Omega)$ in case $\Omega_{m,j}^\pm= \Omega(\omega_{m,j}^\pm)$ with $\omega_{m,j}^\pm \geq \underline{\omega}$; see lemma~\ref{lem:2.6}.
If $\omega_{m,j}^-<\underline{\omega}<\omega_{m,j}^+$, then we reach the same conclusion, again by strong monotonicity of $B^+(\tilde{\Omega})$ and lemma~\ref{lem:2.4}:
	\begin{equation}
	B_{m,j}^-=B^-(\Omega_{m,j}^-)<B^+(\Omega_{m,j}^-)<
	B^+(\Omega_{m,j}^+)=B_{m,j}^+\,.
	\label{eq:3.64}
	\end{equation}

More systematically, these arguments are collected in the following proposition.

\begin{prop}
Let $\varepsilon=1/\omega_k\leq\varepsilon_0$ be sufficiently small.
For any $m \geq 1$ consider the enumeration of Hopf bifurcation parameters $B_{m,j}^\pm$ of theorem~\ref{thm:3.4}.
Then
	\begin{equation}
	B_{m,j}^- < B_{m,j}^+ <0\,,
	\label{eq:3.65}
	\end{equation}
except at a possible hashing tangency $\Omega_{m,j}^-= \min \Omega_{m,j}^+$.
Moreover the series $B_{m,j}^+$ decreases strictly monotonically in $j=1,\ldots, j_m^{\max}$, for each fixed $m\geq 1$.
\label{prop:3.8}
\end{prop}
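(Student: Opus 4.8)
The plan is to read off both assertions from two ingredients already in hand: the strict monotonicity of the control branch $B^+(\Omega)$ from lemma~\ref{lem:2.6}, and the near-vertical geometry of the hashing lines \eqref{eq:3.36}. Throughout I would take $k$ odd, without loss, since even $k$ follows by the shift $\omega \mapsto \omega + \pi \pmod{2\pi}$; I fix $m \geq 1$ and suppress $\delta = \Omega_m^{-1}$.

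For the inequality $B_{m,j}^- < B_{m,j}^+ < 0$ I would collect the argument already sketched before the proposition. Negativity $B_{m,j}^\pm \leq 0$ is immediate from \eqref{eq:2.33} and \eqref{eq:2.63}, which give $B^\pm(\Omega) < 0$ on the open interval $\underline{\tilde{\Omega}}_m < \tilde{\Omega} < \Omega_m$ and $B^\pm = 0$ only at the endpoint $\tilde{\Omega} = \Omega_m$; since each Hopf frequency $\tilde{\Omega}_{m,j}^\pm$ lies in $[\underline{\tilde{\Omega}}_m, \Omega_m]$, the only place strict negativity of $B^+$ can fail is that degenerate endpoint, which occurs for odd $m$ at $j=1$ and produces exactly the value $B_{m,1}^+ = 0$ anticipated in corollary~\ref{cor:3.7}. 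Excepting the hashing tangency, I may assume $\Omega_{m,j}^- < \Omega_{m,j}^+$, and then distinguish whether both intersection points sit on the increasing upper branch (so $B_{m,j}^\pm = B^+(\Omega_{m,j}^\pm)$ and monotonicity of $B^+$ closes the gap), or whether $\omega_{m,j}^- < \underline{\omega} < \omega_{m,j}^+$ (so I chain $B^-(\Omega_{m,j}^-) < B^+(\Omega_{m,j}^-) < B^+(\Omega_{m,j}^+)$ as in \eqref{eq:3.64}, using $B^- < B^+$ at equal argument from \eqref{eq:2.63} and then monotonicity of $B^+$).

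The genuinely new content is the strict monotonicity of $B_{m,j}^+$ in $j$, and here I would reduce everything to monotonicity of $\Omega_{m,j}^+$ in $j$. Writing the hashing \eqref{eq:3.36} at the upper-branch intersection as $\Omega_{m,j}^+ = \varepsilon(\omega_{m,j}^+ + C - 2\pi j)$ with a $j$-independent constant $C = \tfrac{\pi}{2}(1-(-1)^k-(-1)^m)$, and subtracting consecutive indices, gives $\Omega_{m,j+1}^+ - \Omega_{m,j}^+ = \varepsilon(\omega_{m,j+1}^+ - \omega_{m,j}^+ - 2\pi)$. Because the upper-branch frequencies all lie in $[\underline{\omega}, \tfrac{\pi}{2}] \subset (-\tfrac{\pi}{2}, \tfrac{\pi}{2})$, an interval of length strictly below $\pi$ by \eqref{eq:2.38}, \eqref{eq:2.40}, the frequency difference is bounded by $\pi$ in absolute value, so the bracket is at most $\pi - 2\pi = -\pi$ and $\Omega_{m,j}^+$ drops by at least $\pi\varepsilon$ per step. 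Feeding this into the strictly increasing $B^+(\Omega)$ of lemma~\ref{lem:2.6} yields the claimed strict decrease of $B_{m,j}^+ = B^+(\Omega_{m,j}^+)$ in $j = 1, \ldots, j_m^{\max}$.

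The main obstacle I anticipate is not any single estimate but the bookkeeping around the upper branch: lemma~\ref{lem:2.5} warns that the near-vertical hashing lines may meet $\omega^+(\Omega)$ non-transversely, or in several points, for $\Omega$ near $\underline{\Omega}_m$, so $\omega_{m,j}^+$ is in general an intersection \emph{set} rather than a point. The saving grace is that the $2\pi$-gap estimate above uses only the confinement of all upper-branch frequencies to an interval of length $< \pi$, hence holds for \emph{any} selection of intersection points; the monotonicity conclusion is therefore robust to multiplicity. I would close with the remark that the degenerate endpoint case (odd $m$, $j=1$, $B_{m,1}^+ = 0$) is consistent with a strictly decreasing series, since it merely furnishes its largest, first term.
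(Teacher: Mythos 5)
Your proposal is correct and takes essentially the same route as the paper: the inequality \eqref{eq:3.65} is exactly the two-case argument of \eqref{eq:3.64} (both intersections on the upper branch versus straddling $\underline{\omega}$, then $B^-<B^+$ at equal argument plus strict monotonicity of $\Omega \mapsto B^+(\Omega)$), and the strict decrease in $j$ is the paper's combination of the hashing/strip structure \eqref{eq:3.62} with the monotonicity of $B^+$ from lemma~\ref{lem:2.6}. Your explicit $2\pi$-gap computation and the remark that the estimate is robust to multiple upper-branch intersections merely spell out what the paper's proof states tersely.
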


\begin{proof}[\textbf{Proof.}]
Claim \eqref{eq:3.65} has been proved in \eqref{eq:3.64}.
Strict monotonicity of $B_{m,j}^+= B^+(\Omega_{m,j}^+)$ in $j$ follows from strict monotonicity of $\tilde{\Omega}_{m,j}^+=\tilde{\omega}_{m,j}^+/\varepsilon$ in the $j$-strips \eqref{eq:3.62} and from strict monotonicity of $\Omega \mapsto B^+(\Omega)$ in lemma~\ref{lem:2.6}.
This proves the proposition.
\end{proof}

We summarize the results of this section in a final corollary.
Define
	\begin{equation}
	j_m:= [(m+1)/2]\,,
	\label{eq:3.66}
	\end{equation}
for integer $m\geq 1$.
In the following sections we will show, for small enough $0<\varepsilon =1/\omega_k\leq \varepsilon_0$, that $B_{0,1}^+$ is unique, and
	\begin{equation}
	\max B_{m,j_m+1}^+ <B_{0,1}^+<0\qquad
	\text{for} \quad j_m+1\leq j_m^{\max}\,,
	\label{eq:3.67}
	\end{equation}
i.e. as long as $\Omega_{m,j_m+1}^+$ exists.
The maximum is taken over all $m\geq 1$.
See \eqref{eq:3.40} for the delimiter $j=1, \ldots , j_m^{\max}$ of the enumeration $B_{m,j}^\pm$.
On the other hand, we will also show
	\begin{align}
	B_{0,1}^+ &<B_{1,1}^-<0\,,\quad \ \text{and} \quad 	
	\label{eq:3.68}\\
	B_{1,1}^- &\leq B_{m,j}^-<0\,,\quad \text{for all} \ m\geq 1 \ \text{and}\ 
	j=1, \ldots , 	\min\lbrace j_m,j_m^{\max}\rbrace\,.
	\label{eq:3.69}
	\end{align}
This identifies the Pyragas region $\mathcal{P}$ as follows.

\begin{cor}
Let $0<\varepsilon=1/\omega_k\leq \varepsilon_0$ be chosen small enough and assume the orderings \eqref{eq:3.67} -- \eqref{eq:3.69}, for all $m\geq 1$.
Then the nonempty Pyragas region
	\begin{equation}
	\mathcal{P} = \lbrace B<0\,|\quad B_{0,1}^+<B<B_{1,1}^-\rbrace	
	\label{eq:3.70}
	\end{equation}
is the only region of control parameters $B=\tfrac{1}{2}b/\varepsilon$ in the delay equation \eqref{eq:1.18}, such that Pyragas stabilization succeeds for the Hopf bifurcation at $\lambda = \lambda_k=(-1)^{k+1}\varepsilon^{-1}$.
\label{cor:3.9}
\end{cor}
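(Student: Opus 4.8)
The plan is to read Corollary~\ref{cor:3.9} as the bookkeeping step that assembles the crossing analysis of the present section, taking the orderings \eqref{eq:3.67}--\eqref{eq:3.69} as given. First I would discard the half-line $B>0$ outright: Corollary~\ref{cor:3.6} already guarantees a persistent unstable eigenvalue there, so any Pyragas region must satisfy $B<0$. For $B<0$ the trapping Lemma~\ref{lem:3.1} confines every unstable eigenvalue to one of the disjoint $(m,j)$-strips, and Theorem~\ref{thm:3.4} settles the stability budget strip by strip. The low-frequency eigenvalues ($m=0$, together with the real eigenvalue for odd $k$) are inherited from $B=-\infty$ and, by Corollary~\ref{cor:3.5}, become simultaneously stable exactly on $(B_{0,1}^+,0)$; moreover the $m=0$ crossings are transverse and unique, so $B_{0,1}^+$ is a genuine point. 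Each strip with $m\geq1$, in contrast, is quiescent at $B=-\infty$, destabilizes transversely at $B_{m,j}^-$ and recovers stability by $\max B_{m,j}^+$, hence contributes instability only on an interval contained in $(B_{m,j}^-,\max B_{m,j}^+)$. Thus, for $B<0$, linear stability of the Hopf branch is equivalent to $B>B_{0,1}^+$ together with $B$ avoiding every instability interval of the strips $m\geq1$.

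The second step is to verify that the candidate $\mathcal{P}=(B_{0,1}^+,B_{1,1}^-)$ is disjoint from all these instability intervals, splitting the index $j$ at $j_m=[(m+1)/2]$. For $j_m+1\leq j\leq j_m^{\max}$ the strict monotonicity of $j\mapsto B_{m,j}^+$ from Proposition~\ref{prop:3.8}, combined with \eqref{eq:3.67}, yields $\max B_{m,j}^+\leq \max B_{m,j_m+1}^+<B_{0,1}^+$, so the whole instability interval lies strictly below the left endpoint of $\mathcal{P}$. For $1\leq j\leq \min\{j_m,j_m^{\max}\}$ the lower bound \eqref{eq:3.69} gives $B_{m,j}^-\geq B_{1,1}^-$, so the interval cannot begin before the right endpoint of $\mathcal{P}$. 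These two ranges exhaust the enumeration $j=1,\dots,j_m^{\max}$, so no strip with $m\geq1$ is active on $\mathcal{P}$, while $B>B_{0,1}^+$ keeps the $m=0$ part stable; hence $\mathrm{Re}\,\mu<0$ for every eigenvalue when $B\in\mathcal{P}$. Nonemptiness is immediate from \eqref{eq:3.68}.

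For uniqueness and sharpness of the two boundaries I would argue that stability fails on each side. Just below $B_{0,1}^+$ the low-frequency budget of Corollary~\ref{cor:3.5} already produces $\mathrm{Re}\,\mu>0$, so $\mathcal{P}$ cannot extend downward. The decisive point on the upper side is the convention $B_{1,1}^+:=0$ recorded in Corollary~\ref{cor:3.7}: for the odd index $m=1$, $j=1=j_1$, the restabilizing Hopf value is pushed to $B=0$ (the non-recovering $\diamond$ case of fig.~\ref{fig:3.2}), so the $(1,1)$-eigenvalue stays unstable for the entire range $B\in(B_{1,1}^-,0)$. Together with Corollary~\ref{cor:3.6} for $B>0$, this shows that the complement of $\mathcal{P}$ in $\{B\neq0\}$ carries an unstable eigenvalue everywhere, whence $\mathcal{P}$ is the only Pyragas region, as claimed in \eqref{eq:3.70}.

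I expect the genuine content to reside almost entirely in the deferred bundles \eqref{eq:3.67}--\eqref{eq:3.69}, which are merely invoked here; the work internal to the corollary is the clean case split of the second step. The one point demanding care is the non-transversality bookkeeping flagged before Proposition~\ref{prop:3.8}: when the hashing line is tangent to, or meets several times, the upper branch $\omega=\omega^+(\Omega)$, one must use $\max B_{m,j}^+$ in \eqref{eq:3.67} and read the instability interval through the ``last crossing'' statement of Theorem~\ref{thm:3.4}(iv), so that possible multiple or tangential crossings between $\min B_{m,j}^+$ and $\max B_{m,j}^+$ do not smuggle a spurious stable window into $\mathcal{P}$.
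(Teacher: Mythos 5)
Your proposal is correct and follows essentially the same route as the paper: discard $B>0$ via Corollary~\ref{cor:3.6}, obtain instability for $B<B_{0,1}^+$ and for $B_{1,1}^-<B<0$ (you unpack Corollary~\ref{cor:3.7} into Corollary~\ref{cor:3.5} plus the $B_{1,1}^+:=0$ convention, which is exactly its content), and then show the instability intervals $(B_{m,j}^-,\max B_{m,j}^+)$ miss $\mathcal{P}$ by the same case split at $j_m$, using Proposition~\ref{prop:3.8} with \eqref{eq:3.67} for $j>j_m$ and \eqref{eq:3.69} for $j\leq j_m$, with nonemptiness from \eqref{eq:3.68}. Your closing remark on handling tangential or multiple crossings through $\max B_{m,j}^+$ and Theorem~\ref{thm:3.4}(iv) matches the paper's bookkeeping in \eqref{eq:3.71} as well.
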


\begin{proof}[\textbf{Proof.}]
By corollary~\ref{cor:3.6}, instability prevails for all $B>0$.
By corollary~\ref{cor:3.7}, instability holds for $B<B_{0,1}^+$, and for $B_{1,1}^- <B<0$.
It therefore remains to show that
	\begin{equation}
	(B_{m,j}^-\, , \max B_{m,j}^+)\cap (B_{0,1}^+, B_{1,1}^-)= \emptyset\,,
	\label{eq:3.71}
	\end{equation}
for all $m\geq 1$ and $j=1, \ldots , j_m^{\max}$.

Strong monotonicity of $B_{m,j}^+$ with respect to $j$, as in proposition~\ref{prop:3.8}, and assumption \eqref{eq:3.67} imply
	\begin{equation}
	\max B_{m,j}^+ \leq \max B_{m,j_m+1}^+ < B_{0,1}^+
	\label{eq:3.72}
	\end{equation}
for all $m\geq 1$ and $j>j_m$.
This establishes claim \eqref{eq:3.71} for $j_m <j\leq j_m^{\max}$, provided that $j_m <j_m^{\max}$.

It remains to show claim \eqref{eq:3.71} for $1\leq j\leq \min \lbrace j_m, j_m^{\max}\rbrace$.
In this case we invoke assumption \eqref{eq:3.69} to conclude an empty intersection \eqref{eq:3.71}, again.
Since the Pyragas region \eqref{eq:3.70} is nonempty, by assumption \eqref{eq:3.68}, this proves the corollary.
\end{proof}

%%%%%%%%%%%%%%%%%%%%%%%%%%%%%%%%%%%%%%%%%%%%%%%%%%%%%%%%%%%

\section{Locally uniform expansions in $\varepsilon = 1/\omega_k$}
\label{sec:4}

To locate and understand the Pyragas region
	\begin{equation}
	\mathcal{P} = \lbrace B\ |\ B_{0,1}^+ < B < B_{1,1}^-\rbrace\,,
	\label{eq:4.1}
	\end{equation}
in the limit $\varepsilon= 1/\omega_k \rightarrow 0$ of Hopf bifurcations with large unstable dimensions $k$, it remains to establish the precise locations of the control induced Hopf bifurcations $B_{m,j}^\pm$ relative to the gap \eqref{eq:4.1}.
See assumptions \eqref{eq:3.67} -- \eqref{eq:3.69} of corollary~\ref{cor:3.9}.
In the present section we accomplish this task, by expansions with respect to small $\varepsilon$, for arbitrarily bounded $m=1,\ldots , m_0\,$, alias $\delta \geq \delta_0$:= $1/\Omega_{m_0}=1/(2m_0+1)$.

To be precise, we first fix any $m_0\in\mathbb{N},$ alias $\delta_0>0.$
In section \ref{sec:5}, we will choose $\delta_0$ sufficiently small.
In the present section we will then consider $0<\varepsilon\leq\varepsilon_0=\varepsilon_0(\delta_0)$ small enough for certain $\varepsilon$-expansions of $B_{m,j}^\pm$ to hold, uniformly for all $m\leq m_0$ and $j\leq j_m+1,\ j_m=[(m+1)/2].$
Note how the derivatives of $\omega^\pm(\Omega)$ remain bounded in the relevant region; see \eqref{eq:4.2}.
Hence $\omega_{m,j}^\pm$ and $B_{m,j}^\pm$ are defined uniquely by transverse intersections of the 2-scale characteristic equation \eqref{eq:2.27} with the hashing lines $\tilde{\omega} = \tilde{\Omega}/\varepsilon$ of \eqref{eq:4.5}, \eqref{eq:4.6}, in the present section.

In particular, the $\varepsilon$-expansions of $B_{0,1}^+$ and $B_{1,1}^-$ will establish the expansions \eqref{eq:1.20} of $\underline{b}_k = 2\varepsilon B_{0,1}^+$ and $\overline{b}_k=2\varepsilon B_{1,1}^-$; see \eqref{eq:1.29}.
The limit $\delta \rightarrow 0$ of large $m \rightarrow \infty$ requires a different approach, and will therefore be deferred to the next section.

Our strategy has been outlined in \eqref{eq:1.24} -- \eqref{eq:1.29}.
We first solve the $\varepsilon$-independent 2-scale characteristic equation $H(\Omega,\omega)=0$ of \eqref{eq:2.27} for $\omega = \omega (\Omega)$ explicitly:
	\begin{equation}
	\omega = \omega^\pm(\Omega)= 
	\tfrac{\pi}{2}\Omega \pm 
	\arccos (-\tilde{\Omega}\sin (\tfrac{\pi}{2}\Omega))\,;
	\label{eq:4.2}
	\end{equation}
see lemma~\ref{lem:2.5}.
Here and below we only consider odd $k$, without loss of generality.
Even $k$ add $\pi$ to $\omega^\pm$.
Recall $|\omega | \leq \tfrac{\pi}{2}$, for odd $k$.
We also recall $\tilde{\Omega} =1/\delta +\Omega$ and
	\begin{equation}
	0\leq -\tilde{\Omega} \sin (\tfrac{\pi}{2}\Omega)\leq 1\,,
	\label{eq:4.3}
	\end{equation}
by the discriminant condition \eqref{eq:2.34}.
Equivalently
	\begin{equation}
	0\geq \Omega \geq \underline{\Omega}_m\,.
	\label{eq:4.4}
	\end{equation}
Note bounded derivatives of $\omega(\Omega)$, locally uniformly for $0\geq \Omega > \underline{\Omega}_m$.
We suppress explicit dependence on $\delta$, viz. $m$, in the present section.

Next we insert \eqref{eq:4.2} into the hashing relation \eqref{eq:2.22} of lemma~\ref{lem:2.3}:
	\begin{equation}
	\Omega = \varepsilon (\omega^\pm (\Omega) -a \pi)
	\label{eq:4.5}
	\end{equation}
with the abbreviation
	\begin{equation}
	a=a_{m,j}=2j-1+\tfrac{1}{2}(-1)^m\,.
	\label{eq:4.6}
	\end{equation}
Here we have used that $k$ is odd; the relevant modifications for even $k$ cancel in \eqref{eq:4.5} and below.
By the implicit function theorem we can solve \eqref{eq:4.5} for
	\begin{equation}
	\Omega = \Omega(\varepsilon, a)= \Omega_{m,j}^\pm (\varepsilon)\,,
	\label{eq:4.7}
	\end{equation}
uniquely, for small enough $0 <\varepsilon \leq \varepsilon_0 = \varepsilon_0(\delta_0).$
Inserting the result into $\omega^\pm$ of \eqref{eq:4.2} and $B$ of \eqref{eq:2.28} we obtain expansions
	\begin{equation}
	\begin{aligned}
	\omega &=& \omega_{m,j}^\pm(\varepsilon)& :=
	\omega^\pm (\Omega_{m,j}^\pm (\varepsilon))\,,\\
	B &=& B_{m,j}^\pm(\varepsilon)& :=
	-\sin^2(\tfrac{\pi}{2}\Omega_{m,j}^\pm(\varepsilon))/
	\cos \omega_{m,j}^\pm(\varepsilon)\,.
	\end{aligned}
	\label{eq:4.8}
	\end{equation}
We collect these straightforward expansions in the following proposition.

\begin{prop}
\label{prop:4.1}
For any fixed $\delta_0=1/\Omega_{m_0} >0$ consider $0< \varepsilon \leq \varepsilon_0 = \varepsilon_0(\delta_0)$ small enough.
We abbreviate the coefficients
\begin{equation}
\alpha^\pm_{m,j} := 4j + (-1)^m-2\mp1
\label{eq:4.9a}
\end{equation}
Then the expansions for $\Omega_{m,j}^\pm,\ \omega_{m,}^\pm$, and $B_{m,j}^\pm$ with respect to $\varepsilon$ are
	\begin{align}
	\begin{split}
	\Omega_{m,j}^-(\varepsilon)&=
	\alpha^-_{m,j}(-\tfrac{\pi}{2}\varepsilon+ 2m(\tfrac{\pi}{2}\varepsilon)^2+\ldots\,)\,;\\ 
	\Omega_{m,j}^+(\varepsilon)&=
	\alpha^+_{m,j}(-\tfrac{\pi}{2}\varepsilon- 2(m+1)(\tfrac{\pi}{2}\varepsilon)^2+\ldots\,)\,;
	\end{split}\label{eq:4.9}\\
	\intertext{~}%\fuer Leerzeile
	\begin{split}
	\omega_{m,j}^-(\varepsilon)&=
	\tfrac{\pi}{2}(-1+2m\,\alpha^-_{m,j}\,\tfrac{\pi}{2}\varepsilon+
	\ldots\,)\,;\\
	\omega_{m,j}^+(\varepsilon)&=
	\tfrac{\pi}{2}(+1-2(m+1)\,\alpha^+_{m,j}\,\tfrac{\pi}{2}\varepsilon+
	\ldots\,)\,;
	\end{split}\label{eq:4.10}\\
	\intertext{~}%\fuer Leerzeile
	\begin{split}
	B_{m,j}^-(\varepsilon) &=
	\tfrac{\pi}{4m}\,\alpha^-_{m,j}\,(-\tfrac{\pi}{2}\varepsilon\, + \tfrac{1}{2m}(4m^2-\alpha^-_{m,j})(\tfrac{\pi}{2}\varepsilon)^2+\ldots\,)\,;\\
	B_{m,j}^+(\varepsilon) &=
	\tfrac{\pi}{4(m+1)}\,\alpha^+_{m,j}\,(-\tfrac{\pi}{2}\varepsilon\, - \tfrac{1}{2(m+1)}(4(m+1)^2+\alpha^+_{m,j})(\tfrac{\pi}{2}\varepsilon)^2+\ldots\,)\,.
	\end{split}\label{eq:4.11}
	\end{align}
The expansions for $B^\pm,\ \Omega^\pm$ hold for even and odd $k$, alike.
The expansions for $\omega^\pm$ are given for odd $k$.
For even $k$, we have to add $\pi$; see lemma~\ref{lem:2.5}.
\end{prop}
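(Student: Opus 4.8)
The plan is to obtain the three families in sequence, since both $\omega_{m,j}^\pm$ and $B_{m,j}^\pm$ follow by substitution once $\Omega_{m,j}^\pm(\varepsilon)$ is known. I would start from the hashing relation \eqref{eq:4.5} in the form $F^\pm(\Omega,\varepsilon) := \Omega - \varepsilon(\omega^\pm(\Omega) - a\pi) = 0$, with $a = a_{m,j}$ of \eqref{eq:4.6}. Since $\partial_\Omega F^\pm = 1 - \varepsilon(\omega^\pm)'(\Omega)$ equals $1$ at $\varepsilon = 0$, and the derivatives of $\omega^\pm$ stay bounded on the range \eqref{eq:4.4} for fixed $m \leq m_0$ (away from the discriminant locus $\Omega = \underline{\Omega}_m$; see lemma~\ref{lem:2.5}), the real-analytic implicit function theorem yields a unique real-analytic branch $\Omega = \Omega_{m,j}^\pm(\varepsilon)$ with $\Omega_{m,j}^\pm(0)=0$, for $0<\varepsilon\leq\varepsilon_0(\delta_0)$ and uniformly in $m\leq m_0$. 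Its Taylor coefficients are read off by inserting $\Omega = \Omega_1\varepsilon + \Omega_2\varepsilon^2 + \ldots$ into \eqref{eq:4.5}.

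The only data needed from the explicit branch \eqref{eq:4.2}, namely $\omega^\pm(\Omega) = \tfrac{\pi}{2}\Omega \pm \arccos(-\tilde{\Omega}\sin(\tfrac{\pi}{2}\Omega))$ with $\tilde{\Omega}=(2m+1)+\Omega$, are its value and first derivative at $\Omega=0$. One finds $\omega^\pm(0) = \pm\tfrac{\pi}{2}$ (since $\arccos 0 = \tfrac{\pi}{2}$) and, differentiating the $\arccos$ at the argument $0$, $(\omega^\pm)'(0) = \tfrac{\pi}{2}(1 \pm (2m+1))$. Matching the order-$\varepsilon$ balance gives $\Omega_1 = \omega^\pm(0) - a\pi = \pm\tfrac{\pi}{2} - a\pi$, which a short computation against the definition $\alpha_{m,j}^\pm = 4j+(-1)^m-2\mp 1$ of \eqref{eq:4.9a} identifies as $-\tfrac{\pi}{2}\alpha_{m,j}^\pm$; the order-$\varepsilon^2$ balance gives $\Omega_2 = (\omega^\pm)'(0)\,\Omega_1$. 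These are exactly the coefficients in \eqref{eq:4.9}. Re-substituting $\Omega_{m,j}^\pm(\varepsilon)$ into $\omega^\pm(\Omega) = \pm\tfrac{\pi}{2} + (\omega^\pm)'(0)\,\Omega + O(\Omega^2)$ then produces \eqref{eq:4.10} directly.

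The one genuinely delicate step is the expansion of $B_{m,j}^\pm$, since the formula $B = -\sin^2(\tfrac{\pi}{2}\Omega)/\cos\omega$ of \eqref{eq:4.8} is a $0/0$ indeterminate form as $\varepsilon\searrow 0$: the numerator vanishes to order $\varepsilon^2$, but so does $\cos\omega_{m,j}^\pm$ because $\omega_{m,j}^\pm \to \pm\tfrac{\pi}{2}$. The key is to resolve this \emph{before} expanding. Writing $\theta = \arccos(-\tilde{\Omega}\sin(\tfrac{\pi}{2}\Omega))$, so $\cos\theta = -\tilde{\Omega}\sin(\tfrac{\pi}{2}\Omega)$ and $\sin\theta = \sqrt{1-\tilde{\Omega}^2\sin^2(\tfrac{\pi}{2}\Omega)}\geq 0$ on the range \eqref{eq:4.3}, the addition formula for $\cos(\tfrac{\pi}{2}\Omega \pm \theta)$ gives $\cos\omega^\pm = -\sin(\tfrac{\pi}{2}\Omega)\,[\,\tilde{\Omega}\cos(\tfrac{\pi}{2}\Omega) \pm \sqrt{1-\tilde{\Omega}^2\sin^2(\tfrac{\pi}{2}\Omega)}\,]$. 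One factor of $\sin(\tfrac{\pi}{2}\Omega)$ then cancels in \eqref{eq:2.28}, leaving
\[
B_{m,j}^\pm = \frac{\sin(\tfrac{\pi}{2}\Omega_{m,j}^\pm)}{\tilde{\Omega}\cos(\tfrac{\pi}{2}\Omega_{m,j}^\pm) \pm \sqrt{1-\tilde{\Omega}^2\sin^2(\tfrac{\pi}{2}\Omega_{m,j}^\pm)}}\,.
\]

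Now the denominator tends to the nonzero limit $(2m+1)\pm 1$, equal to $2(m+1)$ on the $+$ branch and $2m$ on the $-$ branch (so $m\geq 1$ for $B^-$). Expanding numerator and denominator in $\varepsilon$ — the numerator as $\tfrac{\pi}{2}(\Omega_1\varepsilon + \Omega_2\varepsilon^2) + O(\varepsilon^3)$ and the denominator as $\big(2(m+1)\text{ or }2m\big) + \Omega_1\varepsilon + O(\varepsilon^2)$ — and dividing, the coefficients $\Omega_1 = -\tfrac{\pi}{2}\alpha_{m,j}^\pm$ and $\Omega_2$ from \eqref{eq:4.9} reproduce \eqref{eq:4.11} after collecting the $\varepsilon$ and $\varepsilon^2$ terms. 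The main obstacle is exactly this cancellation and its bookkeeping: one must carry $\Omega_{m,j}^\pm$ to order $\varepsilon^2$ (equivalently $\omega^\pm$ to first order in $\Omega$) to extract the $\varepsilon^2$-coefficient of $B$, and check that the order-$\varepsilon^2$ term of the denominator does not enter at this order. Everything else is routine Taylor expansion, uniform for $m\leq m_0$ because the denominator stays bounded away from $0$ (it is $\geq 2m\geq 2$ for $m\geq 1$) and all derivatives of $\omega^\pm$ remain bounded away from the discriminant locus.
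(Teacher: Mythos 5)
Your proposal is correct and takes essentially the same route as the paper: the paper's own proof consists of the identical setup (implicit function theorem applied to the hashing relation \eqref{eq:4.5} composed with the explicit branch \eqref{eq:4.2}, then back-substitution into \eqref{eq:4.8}) followed by the sentence ``We omit the obvious and tedious calculations.'' Your computations supply exactly those omitted calculations, and they check out against \eqref{eq:4.9}--\eqref{eq:4.11}, including the one nontrivial step — resolving the $0/0$ form in $B$ via the factorization $\cos\omega^\pm = -\sin(\tfrac{\pi}{2}\Omega)\bigl[\tilde{\Omega}\cos(\tfrac{\pi}{2}\Omega)\pm\sqrt{1-\tilde{\Omega}^2\sin^2(\tfrac{\pi}{2}\Omega)}\,\bigr]$, which exposes the nonvanishing limits $2m$ and $2(m+1)$ of the denominator.
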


\begin{proof}[\textbf{Proof.}]
We omit the obvious and tedious calculations.
\end{proof}

\begin{cor}
In the setting of proposition~\ref{prop:4.1} we obtain the following expansions and inequalities, for $0< \varepsilon\rightarrow 0:$
	\begin{align}
	B_{1,1}^- &=
	-(\tfrac{\pi}{2})^2\varepsilon+
	(\tfrac{\pi}{2})^3 \varepsilon^2+\ldots\,;
	\label{eq:4.12} \\
	B_{0,1}^+ &=
	-(\tfrac{\pi}{2})^2\varepsilon-3
	(\tfrac{\pi}{2})^3 \varepsilon^2+\ldots\,;
	\label{eq:4.13} \\
	0> &B_{m,1}^-> B_{m,2}^- > B_{m,3}^-> \ldots
	\label{eq:4.14}
	\end{align}
For $m \geq 1$ and $j_m$:= $[(m+1)/2]$ we obtain the coefficients and expansions
	\begin{align}
	\alpha^-_{m,j_m} &= 2m, \qquad \alpha^+_{m,j_m+1} = 2(m+1)\,;
	\label{eq:4.15a} \\
	B_{m,j_m}^- &=
	-(\tfrac{\pi}{2})^2\varepsilon+
	(2m-1)(\tfrac{\pi}{2})^3 \varepsilon^2+\ldots\,;
	\label{eq:4.15} \\
	B_{m,j_m+1}^+ &=
	-(\tfrac{\pi}{2})^2\varepsilon-
	(2m+3)(\tfrac{\pi}{2})^3 \varepsilon^2+\ldots
	\label{eq:4.16}
	\end{align}
\label{cor:4.2}
\end{cor}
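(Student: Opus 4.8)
The plan is to read off every assertion directly from the locally uniform $\varepsilon$-expansions of Proposition~\ref{prop:4.1}, so that the only real work is evaluating the combinatorial coefficients $\alpha_{m,j}^\pm = 4j+(-1)^m-2\mp 1$ at the specific indices appearing in the claims. First I would record the two explicit sign conventions, $\alpha_{m,j}^+ = 4j+(-1)^m-3$ and $\alpha_{m,j}^- = 4j+(-1)^m-1$, and then substitute the boundary indices into the $B$-expansions \eqref{eq:4.11}.

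For \eqref{eq:4.12} I set $(m,j)=(1,1)$, so that $\alpha_{1,1}^-=4-1-1=2$; the prefactor $\tfrac{\pi}{4m}\alpha_{m,j}^-$ becomes $\tfrac{\pi}{2}$ and the $\varepsilon^2$-bracket $\tfrac{1}{2m}(4m^2-\alpha_{m,j}^-)=\tfrac{1}{2}(4-2)=1$, which turns the $B^-$-expansion into $-(\tfrac{\pi}{2})^2\varepsilon+(\tfrac{\pi}{2})^3\varepsilon^2+\ldots$. Symmetrically, $(m,j)=(0,1)$ gives $\alpha_{0,1}^+=4+1-3=2$, prefactor $\tfrac{\pi}{4(m+1)}\alpha_{m,j}^+=\tfrac{\pi}{2}$, and $\varepsilon^2$-bracket $-\tfrac{1}{2(m+1)}(4(m+1)^2+\alpha_{m,j}^+)=-\tfrac{1}{2}(4+2)=-3$, which is \eqref{eq:4.13}. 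Note that the $B^+$-formula stays well defined at $m=0$ precisely because its prefactor carries $m+1$, not $m$.

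The coefficient identities \eqref{eq:4.15a} are where the one genuine (if elementary) case distinction lives, and I expect this to be the only step requiring care. For even $m$ one has $j_m=[(m+1)/2]=m/2$ and $(-1)^m=+1$, so $\alpha_{m,j_m}^-=4(m/2)+1-1=2m$; for odd $m$ one has $j_m=(m+1)/2$ and $(-1)^m=-1$, so $\alpha_{m,j_m}^-=2(m+1)-1-1=2m$. The floor-shift and the parity sign conspire to cancel, so the value $2m$ is parity-independent; the identical bookkeeping at $j_m+1$ yields $\alpha_{m,j_m+1}^+=2(m+1)$. Feeding $\alpha_{m,j_m}^-=2m$ into \eqref{eq:4.11} collapses the $\varepsilon^2$-bracket to $\tfrac{1}{2m}(4m^2-2m)=2m-1$ and the prefactor to $\tfrac{\pi}{2}$, giving \eqref{eq:4.15}; feeding $\alpha_{m,j_m+1}^+=2(m+1)$ collapses the $B^+$-bracket to $-\tfrac{1}{2(m+1)}(4(m+1)^2+2(m+1))=-(2m+3)$, giving \eqref{eq:4.16}.

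Finally, the ordering chain \eqref{eq:4.14} I would obtain from the leading order of the $B^-$-expansion alone. For fixed $m\geq 1$ the leading term is $-\tfrac{\pi}{4m}\alpha_{m,j}^-\cdot\tfrac{\pi}{2}\varepsilon$, and since $\alpha_{m,j}^-=4j+(-1)^m-1$ is strictly increasing in $j$ and already positive at $j=1$, where $\alpha_{m,1}^-=3+(-1)^m\geq 2$, these leading coefficients are negative and strictly decreasing in $j$. Hence, for $\varepsilon\leq\varepsilon_0$ small enough that the $O(\varepsilon^2)$ remainders in \eqref{eq:4.11} cannot close the gaps between consecutive leading terms, one gets $0>B_{m,1}^->B_{m,2}^->\ldots$ over the range $j\leq j_m+1$ covered by the expansions. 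No ingredient beyond substitution and this leading-order sign count is needed, so the routine computation suppressed in Proposition~\ref{prop:4.1} is the only nontrivial input.
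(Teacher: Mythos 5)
Your proposal is correct and is exactly the paper's approach: the paper's proof of corollary~\ref{cor:4.2} consists of the single sentence that it ``follows from proposition~\ref{prop:4.1}, by explicit evaluation,'' and your substitutions of $(m,j)=(1,1)$, $(0,1)$, $(m,j_m)$, $(m,j_m+1)$ into \eqref{eq:4.9a}--\eqref{eq:4.11}, together with the parity check for $j_m=[(m+1)/2]$ and the leading-order monotonicity argument for \eqref{eq:4.14}, are precisely the computations the paper suppresses. The coefficient arithmetic checks out in every case, and your restriction of the ordering \eqref{eq:4.14} to the index range $j\leq j_m+1$ covered by the uniform expansions is the appropriate caveat.
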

\begin{proof}[\textbf{Proof.}]
The proof follows from proposition~\ref{prop:4.1}, by explicit evaluation.
\end{proof}

\begin{cor}
The assumptions \eqref{eq:3.67} -- \eqref{eq:3.69} of corollary~\ref{cor:3.9} are satisfied for all $1 \leq m\leq m_0$ and sufficiently small $0<\varepsilon \leq \varepsilon_0$.
In particular, expansions \eqref{eq:4.12}, \eqref{eq:4.13} determine the $\varepsilon$-expansions \eqref{eq:1.20} of the boundaries $\underline{b}_k,\ \overline{b}_k$ of the Pyragas region, in our main theorem~\ref{thm:1.2}.
\label{cor:4.3}
\end{cor}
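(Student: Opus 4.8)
The plan is to read off the three assumptions \eqref{eq:3.67}--\eqref{eq:3.69} of corollary~\ref{cor:3.9} directly from the explicit expansions collected in corollary~\ref{cor:4.2}. The decisive observation is that the four relevant control values $B_{0,1}^+,\ B_{1,1}^-,\ B_{m,j_m}^-,\ B_{m,j_m+1}^+$ all share the identical leading term $-(\tfrac{\pi}{2})^2\varepsilon$. Hence each required inequality is in fact decided at the next order $\varepsilon^2$. Since the ranges $1\le m\le m_0$ and $j\le j_m+1$ are finite, it then suffices to establish each comparison at order $\varepsilon^2$ with a sign uniform in $m$, and to absorb the higher-order remainders by choosing $\varepsilon_0=\varepsilon_0(\delta_0)$ small enough.

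First I would settle \eqref{eq:3.68} and \eqref{eq:3.67}. Subtracting \eqref{eq:4.13} from \eqref{eq:4.12} gives $B_{1,1}^- - B_{0,1}^+ = 4(\tfrac{\pi}{2})^3\varepsilon^2+\ldots>0$, so $B_{0,1}^+<B_{1,1}^-<0$ for small $\varepsilon$, which is \eqref{eq:3.68}. For \eqref{eq:3.67} I compare \eqref{eq:4.13} with \eqref{eq:4.16}, obtaining $B_{0,1}^+-B_{m,j_m+1}^+ = 2m(\tfrac{\pi}{2})^3\varepsilon^2+\ldots$, which is strictly positive for every $m\ge1$, so $B_{m,j_m+1}^+<B_{0,1}^+$. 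The strict monotonicity of $j\mapsto B_{m,j}^+$ from proposition~\ref{prop:3.8} then extends this to $B_{m,j}^+\le B_{m,j_m+1}^+<B_{0,1}^+$ for all $j\ge j_m+1$ in the range $j\le j_m^{\max}$, as required; whenever $j_m+1>j_m^{\max}$ the assertion is vacuous.

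Next I would treat \eqref{eq:3.69}. By the monotonicity $0>B_{m,1}^->B_{m,2}^->\ldots$ of \eqref{eq:4.14}, the most negative value over the range $j\le\min\{j_m,j_m^{\max}\}$ is attained at the largest admissible index, so it is enough to bound $B_{m,j_m}^-$ from below by $B_{1,1}^-$. Subtracting \eqref{eq:4.12} from \eqref{eq:4.15} yields $B_{m,j_m}^- - B_{1,1}^- = 2(m-1)(\tfrac{\pi}{2})^3\varepsilon^2+\ldots\ge0$, strictly positive for $m\ge2$ and an exact identity at $m=1$ (where $j_1=1$). This precisely matches the non-strict inequality in \eqref{eq:3.69} and confirms $B_{1,1}^-\le B_{m,j}^-<0$ throughout.

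Because only finitely many $m\le m_0$ occur, the finitely many strict $\varepsilon^2$-gaps above share a common positive lower bound on their coefficients, so a single $\varepsilon_0$ makes all inequalities hold at once; the borderline case $m=1$ of \eqref{eq:3.69} needs no smallness, being an identity. Finally, inserting \eqref{eq:4.13} and \eqref{eq:4.12} into $\underline{b}_k=2\varepsilon B_{0,1}^+$ and $\overline{b}_k=2\varepsilon B_{1,1}^-$ reproduces the claimed expansions \eqref{eq:1.20} of theorem~\ref{thm:1.2}. I do not expect a genuine obstacle here: the comparisons are clean precisely because all four quantities agree at order $\varepsilon$, and the only care needed is the reduction of the $j$-ranges by monotonicity together with the bookkeeping of the borderline $m=1$ case. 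The truly delicate regime of large $m$, i.e. $\delta\searrow0$, where the favourable coefficient $2(m-1)$ degenerates relative to the suppressed remainders, is not reached by these bounded-$m$ expansions and is deferred to section~\ref{sec:5}.
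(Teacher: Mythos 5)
Your proposal is correct and follows essentially the same route as the paper: the paper's proof also establishes \eqref{eq:3.68}, \eqref{eq:3.67}, and \eqref{eq:3.69} by direct comparison of the expansions \eqref{eq:4.12}, \eqref{eq:4.13}, \eqref{eq:4.15}, \eqref{eq:4.16}, using the monotonicity \eqref{eq:4.14} to reduce \eqref{eq:3.69} to the index $j=j_m$, exactly as you do. Your explicit computation of the order-$\varepsilon^2$ gaps, the treatment of the borderline identity at $m=1$, and the remark on uniformity over the finitely many $m\leq m_0$ merely spell out what the paper leaves as "comparison of the respective expansions."
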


\begin{proof}[\textbf{Proof.}]
For $1 \leq m\leq m_0$, claim \eqref{eq:3.67} follows by comparison of the expansion \eqref{eq:4.16} for $B_{m,j_m+1}^+$ with expansion \eqref{eq:4.13} for $B_{0,1}^+$.
Likewise \eqref{eq:4.12}, \eqref{eq:4.15}, and \eqref{eq:4.14}, in this order, imply
	\begin{equation}
	B_{1,1}^- \leq B_{m,j_m}^- < B_{m,j_m-1}^- < \ldots < B_{m,1}^-\,.
	\label{eq:4.17}
	\end{equation}
This proves claim \eqref{eq:3.69}.
The remaining claim $B_{0,1}^+ <B_{1,1}^-$ of \eqref{eq:3.68} is immediate by comparison of the respective expansions \eqref{eq:4.12} and \eqref{eq:4.13}.
This proves the corollary.
\end{proof}

\begin{figure}[t!]
\centering \includegraphics[width=\textwidth]{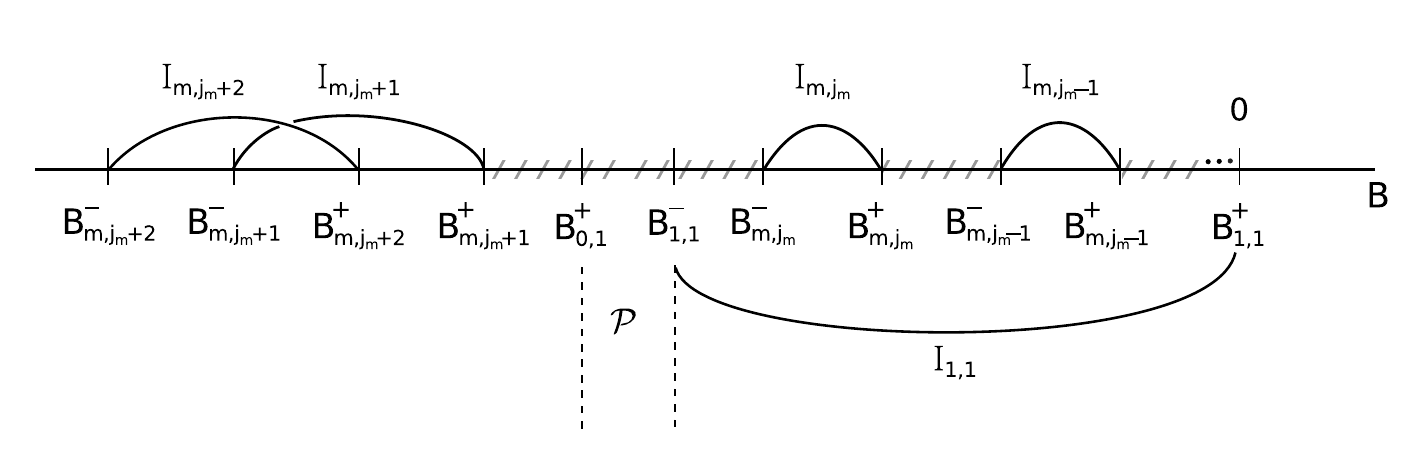}
\caption{\emph{
Stability windows (hashed) between intervals $I_{m,j} =(B_{m,j}^-, B_{m,j}^+)$ of Hopf-induced unstable eigenvalues with imaginary parts in the disjoint intervals designed by $m,j$.
Note how the first, leftmost, stability window between $I_{m,j_{m}+1}$ and $I_{m,j_m}$ contains the only Pyragas region $\mathcal{P} =(B_{0,1}^+, B_{1,1}^-)$ of stable supercritical Hopf bifurcation, for any $m$ such that $I_{m,j_m+1}$ still exists.
}}
\label{fig:4.1}
\end{figure}

For $m\geq 1$, there is an amusing characterization of the critical index $j=j_m=[(m+1)/2]$ as a \emph{Pyragas switch index}, in our expansions.
In fact our $\varepsilon$-expansions \eqref{eq:4.11} easily imply
	\begin{equation}
	\begin{aligned}
	&B_{m,j+1}^+< B_{m,j}^- \quad &\text{for} \quad &j=1, \ldots , j_m\,,\\
	&B_{m,j+1}^+> B_{m,j}^- \quad &\text{for} \quad &j>j_m\,.
	\end{aligned}
	\label{eq:4.18}
	\end{equation}
The interpretation is easy.
In the open interval $B\in I_{m,j}$:= $(B_{m,j}^-, B_{m,j}^+)$, consider the Hopf-induced unstable eigenvalue $\mu =\mu_R+i\tilde{\omega},\ \mu_R>0$, with frequency $\tilde{\omega}$ trapped in the disjoint intervals designated by $m,j$; see lemma~\ref{lem:3.1}, and \eqref{eq:4.5}, \eqref{eq:4.6} with $|\omega| \leq \tfrac{\pi}{2}$.
Then \eqref{eq:4.18} states that successive instability intervals $I_{m,j}$ and $I_{m,j+1}$ open a stability window in between, for $j \leq j_m$, but overlap for $j>j_m$.
See fig.~\ref{fig:4.1}.
In view of \eqref{eq:4.17}, therefore, the stability window
	\begin{equation}
	B_{m,j_m+1}^+ < B < B_{m,j_m}^-
	\label{eq:4.19}
	\end{equation}
is the very first stability window encountered, between the instability intervals $I_{m,j},\ m\geq 1$, as $j$ decreases from $j_m^\text{max}$ to $1$, i.e. as  $B<0$ increases towards zero from absent control at $B =-\infty$.

The instability inherited from $B =-\infty$, on the other hand, is only compensated for once $B_{0,1}^+<B<0,$ by the stabilizing series of Hopf bifurcations in the $m=0$ series at $B= B_{0,j}^+$.
The instability interval $I_{1,1} =(B_{1,1}^-, B_{1,1}^+)$ which starts from $B_{1,1}^- >B_{0,1}^+$, however, extends all the way to $B_{1,1}^+=0$.
Therefore any stability windows between the intervals $I_{m,j}$ of instability, for $m\geq 1$ and $j$ from $j_m+1$ down to $1$, remain ineffective. 
The only exception is the first such gap \eqref{eq:4.19} which, somewhat miraculously, contains the Pyragas region $\mathcal{P}$ of \eqref{eq:3.70} by \eqref{eq:3.67}, \eqref{eq:3.69}, as established above.

We are somewhat amazed how all these first stability windows align, simultaneously for all resonance orders $m$, at the same first order location
	\begin{equation}
	B =-(\tfrac{\pi}{2})^2\varepsilon + \ldots\,,
	\label{eq:4.20}
	\end{equation}
to contribute to Pyragas stabilization from $B=B_{0,1}^+$ to $B=B_{1,1}^-$ by a second order effect.
We will see next how such gaps also arise in the remaining limit $\delta \rightarrow 0$ of large $m\rightarrow +\infty$.

%%%%%%%%%%%%%%%%%%%%%%%%%%%%%%%%%%%%%%%%%%%%%%%%%%%%%%%%%%%

\section{Asymptotic expansions for large $\mathbf{\Omega_m = 1/\delta}$}
\label{sec:5}

In the previous section we have shown how the series of destabilizing Hopf intervals $B\in I_{m,j}$:= $ (B_{m,j}^-, B_{m,j}^+)$ skip the Pyragas candidate
	\begin{equation}
	\mathcal{P}=(B_{0,1}^+, B_{1,1}^-)\,,
	\label{eq:5.1}
	\end{equation}
for bounded values $m\leq m_0$, accordingly bounded $j\leq j_m^{\max}$, and small enough $\varepsilon=1/\omega_k \leq \varepsilon_0(\delta_0)$.
In other words, we have established assumptions \eqref{eq:3.67} -- \eqref{eq:3.69} of corollary~\ref{cor:3.9}, for arbitrarily bounded $m\leq m_0$ and $0<\varepsilon\leq\varepsilon_0(\delta_0)$.
In the present section we will complete this analysis, for large $m>m_0$.

In section \ref{sec:4}, we had fixed $m,j$, to study $\varepsilon$-expansions of $B_{m,j}^\pm$.
We also noticed the central role of
	\begin{equation}
	j=j_m:= [(m+1)/2]\,,
	\label{eq:5.2}
	\end{equation}
where the Pyragas switch \eqref{eq:4.19} actually occurs, between $B_{m,j_m+1}^+$ and $B_{m,j_m}^-$.
This suggests a somewhat delicate parametrization of the relevant expansions by $\varepsilon$ and $\delta$:= $1/\Omega_m = 1/(2m+1)$, both tending to zero in a region $\delta \geq \delta (\varepsilon)$.
Instead, we choose a parametrization of the problem by a rectangular region of $(\delta, \omega)$.
The $\varepsilon$-independent relations $\Omega^\pm= \Omega(\delta, \omega),\, B^\pm= B(\delta, \omega)$ will provide expansions with respect to small $\delta$.
At $j=j_m$ for $B_{m,j_m}^-$, and at $j=j_m+1$ for $B_{m,j_m+1}^+$, we will also obtain expansions for
	\begin{equation}
	\varepsilon = \varepsilon(\delta, \omega)\,,
	\label{eq:5.4}
	\end{equation}
from the hashing relation \eqref{eq:4.5}.
In other words, we determine $\varepsilon$ such that $B_{m,j_m}^-$ and $B_{m,j_m+1}^+$ arise at the frequency parameter $\omega$, for some small $\delta$.
The $\varepsilon$-expansions for the Pyragas boundary $B_{0,1}^+(\varepsilon),\ B_{1,1}^-(\varepsilon)$, in section~\ref{sec:4}, did not depend on $\delta$. 
They will allow us to compare the resulting locations, now, uniformly for small $0<\delta\leq \delta_0$.
This will prove our main theorem, via corollary~\ref{cor:3.9}.

We address the general case in lemma~\ref{lem:5.1}.
The limits $\omega \rightarrow \pm \tfrac{\pi}{2}$, for $B_{m,j}^\pm$, will be considered in lemma~\ref{lem:5.2}.
These results address the cases where $B_{m,j_m}^-,\ B_{m,j_m+1}^+$ actually exist, and $\omega_{m,j_m}^-<\pi\Omega_{m,j_m}^-$.
The remaining cases where $j_m \geq j_m^{\max}$ are prepared by expansions for $\min B$, in proposition~\ref{prop:5.3}, and are resolved in lemma~\ref{lem:5.4}.

As in section~\ref{sec:4}, we may restrict our attention to odd $k$, $|\omega |\leq \tfrac{\pi}{2}$.
See also \eqref{eq:4.3}, \eqref{eq:4.4}.
We mostly replace $m$ by $\delta = 1/\Omega_m= 1/(2m+1)$ and think of $0<\delta\leq \delta_0$ and $0<\varepsilon\leq \varepsilon_0(\delta_0)$ as small continuous, rather than discrete, real variables in all expansions.
For example
	\begin{align}
	2j_m &\phantom{:}= 2[(m+1)/2]=\tfrac{1}{2}(\delta^{-1}-(-1)^m)\,,
	\label{eq:5.5}\\
	a_{m,j_m} &:= 2j_m-1+\tfrac{1}{2}(-1)^m = \tfrac{1}{2}(\alpha^\pm_{m,j_m} \pm1) = \tfrac{1}{2}\delta^{-1}-1\,,
	\label{eq:5.6}
	\end{align}
for the Pyragas switch index $j=j_m$ of \eqref{eq:4.19}, \eqref{eq:5.2} and the associated shift $a=a_{m,j_m}$ in the hashing \eqref{eq:4.5}, \eqref{eq:4.6}.
See also \eqref{eq:4.15a}.

Our $\delta$-expansions are based on section~\ref{sec:2}.
From lemmata~\ref{lem:2.5}, \ref{lem:2.6} we recall how the 2-scale characteristic equation in the form
	\begin{align}
	0 &= (\delta^{-1}+\Omega) \sin (\tfrac{\pi}{2}\Omega) +
	\cos (\omega-\tfrac{\pi}{2}\Omega)\,,
	\label{eq:5.7}\\
	B &= -\sin^2(\tfrac{\pi}{2}\Omega)/\cos \omega\,,
	\label{eq:5.8}
	\end{align}
of \eqref{eq:2.27}, \eqref{eq:2.28} gives rise to unique functions $\Omega= \Omega(\delta, \omega),\ B=B(\delta, \omega)$, successively.
Insertion into the hashing
	\begin{equation}
	\varepsilon = \Omega/(\omega-\pi a_{m,j_m})
	\label{eq:5.9}
	\end{equation}
with \eqref{eq:5.5}, \eqref{eq:5.6} then provides $\varepsilon = \varepsilon^-(\delta, \omega)$, such that we encounter
	\begin{equation}
	B=B_{m,j_m}^-(\delta, \omega) \qquad \text{at} \qquad
	\varepsilon= \varepsilon^-(\delta, \omega)\,,
	\label{eq:5.10}
	\end{equation}
for $-\tfrac{\pi}{2}<\omega \leq \tfrac{\pi}{2} \underline{\Omega}_m$.
Similarly, the hashing
	\begin{equation}
	\varepsilon = \Omega/(\omega-\pi a_{m,j_m+1})\,,
	\label{eq:5.11}
	\end{equation}
with $a_{m,j_m+1} = a_{m,j_m}+2$ in \eqref{eq:5.6}, provides $\varepsilon = \varepsilon^+(\delta, \omega)$ such that we encounter
	\begin{equation}
	B= B_{m,j_m+1}^+(\delta, \omega)\qquad \text{at} \qquad
	\varepsilon= \varepsilon^+(\delta, \omega)
	\label{eq:5.12}
	\end{equation}
for $\tfrac{\pi}{2} \underline{\Omega}_m \leq \omega <\tfrac{\pi}{2}$.
The established $\varepsilon$-expansions of the Pyragas boundaries $B_{0,1}^+(\varepsilon),\ B_{1,1}^-(\varepsilon)$ of corollary~\ref{cor:4.2}, \eqref{eq:4.12} and \eqref{eq:4.13}, finally, at $\varepsilon= \varepsilon^\pm(\delta, \omega)$, respectively, allow us to compare these $\delta$-expansions as required in assumptions \eqref{eq:3.67} and \eqref{eq:3.69} of corollary~\ref{cor:3.9}.

Note how any possible nonuniqueness within the sets $B_{m,j}^+$ is remedied by our parametrization: we neither claim nor require injectivity  of $\omega \mapsto \varepsilon_{m,j}^+(\delta,\omega),$ for fixed $\delta$.

\begin{lem}
For odd $k$, and uniformly in $|\omega|\leq \tfrac{\pi}{2}$, we obtain the following expansions with respect to small $\delta$:
	\begin{align}
	\Omega(\delta,\omega) &=
	-\tfrac{c}{\pi/2}(\delta-s\delta^2+\ldots )
	\label{eq:5.13}\\
	B(\delta,\omega)&=
	-c(\delta^2-2s\delta^3+\ldots )
	\label{eq:5.14}\\
	\varepsilon^\pm(\delta,\omega)&=
	(\tfrac{\pi}{2})^{-2}c(\delta^2+ 
	(\tfrac{\omega}{\pi/2}\mp 2-s)\delta^3+\ldots)
	\label{eq:5.15}\\
	B_{0,1}^+(\delta,\omega)&=
	-c(\delta^2+(\tfrac{\omega}{\pi/2}-2-s)\delta^3+\ldots )
	\label{eq:5.16}\\
	B_{1,1}^-(\delta, \omega)&=
	-c(\delta^2+(\tfrac{\omega}{\pi/2}+2-s)\delta^3+\ldots )
	\label{eq:5.17}
	\end{align}
Here we use the abbreviations $c=\cos \omega,\ s=\sin\omega$.
Omitted terms are of the first omitted integer order in $\delta$.
\label{lem:5.1}
\end{lem}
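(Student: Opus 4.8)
The plan is to build the five expansions sequentially, in the order $\Omega \to B \to \varepsilon^\pm \to (B_{0,1}^+, B_{1,1}^-)$ in which they are listed, since each feeds the next. The starting point is the $\varepsilon$-independent characteristic equation \eqref{eq:5.7}, which I treat by the implicit function theorem in the small parameter $\delta$. The crucial observation is that the factor $(\delta^{-1}+\Omega)$ multiplying $\sin(\tfrac{\pi}{2}\Omega)$ is large, so any solution must have $\sin(\tfrac{\pi}{2}\Omega)$, hence $\Omega$ itself, of order $\delta$; in particular $\Omega \to 0$ as $\delta \to 0$. Existence and uniqueness of the branch $\Omega = \Omega(\delta,\omega)$ is already guaranteed by lemmata~\ref{lem:2.5} and~\ref{lem:2.6}; quantitatively it reappears through the partial derivative $H_\Omega = \sin(\tfrac{\pi}{2}\Omega) + \tfrac{\pi}{2}\tilde{\Omega}\cos(\tfrac{\pi}{2}\Omega) + \tfrac{\pi}{2}\sin(\omega-\tfrac{\pi}{2}\Omega)$ of \eqref{eq:2.43}: for small $\Omega$ one has $\cos(\tfrac{\pi}{2}\Omega)\to 1$ and $\tilde{\Omega}\approx\delta^{-1}$, so $H_\Omega$ is dominated by the large term $\tfrac{\pi}{2}\delta^{-1}$ and stays bounded away from zero, uniformly in $|\omega|\le\tfrac{\pi}{2}$. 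This yields a real-analytic solution whose $\delta$-Taylor coefficients are smooth, bounded functions of $\omega$.

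To extract \eqref{eq:5.13} I would insert the ansatz $\Omega = a_1(\omega)\delta + a_2(\omega)\delta^2 + \ldots$ into \eqref{eq:5.7}, using $\sin(\tfrac{\pi}{2}\Omega) = \tfrac{\pi}{2}\Omega + O(\Omega^3)$ and $\cos(\omega-\tfrac{\pi}{2}\Omega) = c + \tfrac{\pi}{2}s\,\Omega + O(\Omega^2)$ with $c=\cos\omega,\ s=\sin\omega$. Matching orders $\delta^0$ and $\delta^1$ gives $\tfrac{\pi}{2}a_1 + c = 0$ and $\tfrac{\pi}{2}a_2 + \tfrac{\pi}{2}s\,a_1 = 0$, i.e. $a_1 = -c/(\tfrac{\pi}{2})$ and $a_2 = -s\,a_1$, which is precisely \eqref{eq:5.13}. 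Feeding this into \eqref{eq:5.8} through $\sin(\tfrac{\pi}{2}\Omega) = -c\delta + cs\,\delta^2 + O(\delta^3)$, squaring and dividing by $\cos\omega$, gives \eqref{eq:5.14} at once.

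The hashing step produces $\varepsilon^\pm$. With $a_{m,j_m} = \tfrac{1}{2}\delta^{-1}-1$ and $a_{m,j_m+1} = \tfrac{1}{2}\delta^{-1}+1$ from \eqref{eq:5.6}, the denominators in \eqref{eq:5.9} and \eqref{eq:5.11} read $\omega - \pi a = -\tfrac{\pi}{2}\delta^{-1} + (\omega\pm\pi)$, the sign $+$ belonging to $\varepsilon^-$ and $-$ to $\varepsilon^+$. Factoring out $-\tfrac{\pi}{2}\delta^{-1}$, expanding the resulting geometric series, and multiplying by the expansion of $\Omega$ yields \eqref{eq:5.15}; the $\mp 2$ in the cubic coefficient is the imprint of the two hashing offsets $\mp\pi$ carried by $a_{m,j_m}$ and $a_{m,j_m+1}$. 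Finally, since $\varepsilon^\pm = O(\delta^2)$, inserting $\varepsilon = \varepsilon^\pm(\delta,\omega)$ into the section~\ref{sec:4} expansions \eqref{eq:4.13} for $B_{0,1}^+$ and \eqref{eq:4.12} for $B_{1,1}^-$ is legitimate for small $\delta$; the quadratic terms $\propto\varepsilon^2$ there contribute only at order $\delta^4$, so to the stated order only the linear term $-(\tfrac{\pi}{2})^2\varepsilon$ survives, and the prefactor $-(\tfrac{\pi}{2})^2\cdot(\tfrac{\pi}{2})^{-2} = -1$ delivers \eqref{eq:5.16} and \eqref{eq:5.17}.

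The one point demanding genuine care is the uniformity up to the endpoints $\omega = \pm\tfrac{\pi}{2}$, where $c = \cos\omega$ vanishes and the leading coefficients of \eqref{eq:5.13}--\eqref{eq:5.17} all degenerate to zero. There equation \eqref{eq:5.7} collapses to $(\delta^{-1}+\Omega\pm 1)\sin(\tfrac{\pi}{2}\Omega) = 0$, forcing $\Omega\equiv 0$, so the expansions remain consistent precisely because every displayed term is proportional to $c$. Since $H_\Omega$ never degenerates---it is dominated by $\tfrac{\pi}{2}\delta^{-1}$ throughout---the implicit function $\Omega(\delta,\omega)$ and all subsequent compositions are real-analytic on the full compact $\omega$-interval, whence the remainders are of the claimed first omitted integer order in $\delta$, uniformly in $\omega$. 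The only bookkeeping subtlety to flag is that $B_{0,1}^+(\delta,\omega)$ and $B_{1,1}^-(\delta,\omega)$ abbreviate the Pyragas boundaries evaluated at $\varepsilon = \varepsilon^\pm(\delta,\omega)$, as fixed in \eqref{eq:5.10} and \eqref{eq:5.12}, rather than genuinely new functions of $(\delta,\omega)$.
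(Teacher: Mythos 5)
Your proposal is correct and follows essentially the same route as the paper's proof: an implicit-function-theorem solution of \eqref{eq:5.7} producing the uniform prefactor $c=\cos\omega$, insertion into \eqref{eq:5.8} with cancellation of the denominator $c$, the hashings with $a_{m,j_m}=\tfrac{1}{2}\delta^{-1}-1$ and $a_{m,j_m+1}=\tfrac{1}{2}\delta^{-1}+1$ yielding $\varepsilon^\pm$, and finally substitution of $\varepsilon^\pm=\mathcal{O}(\delta^2)$ into \eqref{eq:4.13}, \eqref{eq:4.12}, where the quadratic terms only contribute at order $\delta^4$. The sole cosmetic difference is that the paper first regularizes \eqref{eq:5.7} to the form \eqref{eq:5.18} (and obtains the coefficients by solving for $\delta(\Omega)$ in \eqref{eq:5.20} and inverting the series) so that the implicit function theorem applies at $\delta=0$ itself, whereas you invoke the singular derivative $H_\Omega\sim\tfrac{\pi}{2}\delta^{-1}$ and then determine coefficients by an ansatz; multiplying $H$ by $\delta$ turns your argument into exactly the paper's.
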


\begin{proof}[\textbf{Proof.}]
To see why we obtain a uniform expansion of $\Omega$, we rewrite \eqref{eq:5.7} in the equivalent form
	\begin{equation}
	\sin (\tfrac{\pi}{2}\Omega)=
	-c\cos(\tfrac{\pi}{2}\Omega)\delta / (1+\delta(\Omega+s))\,.
	\label{eq:5.18}
	\end{equation}
For $0\leq \delta\leq \delta_0$, the implicit function theorem provides $\Omega= \Omega(\delta,\omega)$, uniformly in $\omega$.
Note the important uniform prefactor $c=\cos \omega$ in
	\begin{equation}
	\Omega = (\Omega/\sin(\tfrac{\pi}{2}\Omega)) \cdot
	\sin (\tfrac{\pi}{2}\Omega)=-c\delta \cdot(\ldots )\,,
	\label{eq:5.19}
	\end{equation}
because $\Omega/\sin(\tfrac{\pi}{2}\Omega)$ is regular nonzero.
To obtain the specific expansion \eqref{eq:5.13} for $\Omega$ we solve \eqref{eq:5.7} for $\delta$, expand for $\Omega$ at $\Omega =0$, and invert the resulting series for
	\begin{equation}
	\begin{aligned}
	\delta &=
	-\sin(\tfrac{\pi}{2}\Omega)/(\cos (\omega -\tfrac{\pi}{2}\Omega)
	+\Omega \sin (\tfrac{\pi}{2}\Omega))\\
	&=
	-\tfrac{\pi/2}{c}\Omega +\ldots \,.
	\end{aligned}
	\label{eq:5.20}
	\end{equation}

To obtain expansion \eqref{eq:5.14} for $B=-\sin^2(\tfrac{\pi}{2}\Omega)/c$ we insert \eqref{eq:5.13} into \eqref{eq:5.8} and observe cancellation of the denominator $c$.
Indeed the numerator picks up a factor $c^2$ from expansion \eqref{eq:5.13}.
This proves \eqref{eq:5.14}.
Of course \eqref{eq:5.14} applies to all $B_{m,j}^\pm = B(\delta, \omega)$, identically, at the appropriate values $\omega$.

The expansions \eqref{eq:5.15} for $\varepsilon^\pm$ follow from the hashings \eqref{eq:5.9} -- \eqref{eq:5.12}, if we replace $a_{m,j_m}$ and $a_{m,j_m+1} = a_{m,j_m}+2$ by their appropriate $\delta$-dependent values $\tfrac{1}{2}\delta^{-1}\pm 1$ from \eqref{eq:5.6}.
The prefactor $c$ remains inherited from $\Omega$.

To prove claims \eqref{eq:5.16}, \eqref{eq:5.17} it only remains to plug \eqref{eq:5.15} into the $\varepsilon$-expansions \eqref{eq:4.12} and \eqref{eq:4.13} for $B_{1,1}^-$ and $B_{0,1}^+$.
Note how the new term of order $\delta^3$, which roughly speaking corresponds to $\varepsilon^{3/2}$, arises from the leading term $-(\tfrac{\pi}{2})^2\varepsilon^\pm +\ldots $ alone.
This proves the lemma.
\end{proof}

\begin{lem}
Let $0<\delta \leq \delta_0$ be small enough and consider odd $k$.
Let $\Omega, B, \varepsilon^\pm, B_{0,1}^+, B_{1,1}^-$ be expanded as in lemma~\ref{lem:5.1}.

Then we obtain the inequalities
	\begin{equation}
	B_{1,1}^-(\delta, \omega)<B(\delta,\omega) < B_{0,1}^+(\delta, \omega)\,,
	\label{eq:5.21}
	\end{equation}
for all $0<\delta \leq \delta_0,\ |\omega|<\tfrac{\pi}{2}$.
For $j_m+1\leq j_m^{\max}$ and at $\varepsilon= \varepsilon^+(\delta,\omega)$ we conclude, in particular, 
	\begin{equation}
	B_{m,j_m+1}^+ < B_{0,1}^+(\delta,\omega)\,,
	\label{eq:5.22}
	\end{equation}
i.e. assumption \eqref{eq:3.67} of corollary~\ref{cor:3.9} holds. 
Likewise, for $j_m\leq j_m^{\max}$ and at $\varepsilon= \varepsilon^-(\delta, \omega)$ we conclude 
	\begin{equation}
	B_{1,1}^-(\delta,\omega)< B_{m,j_m}^-\,.
	\label{eq:5.22a}
	\end{equation}

Under the additional assumption
	\begin{equation}
	-\tfrac{\pi}{2} <\omega_* \leq \pi\Omega(\delta,\omega_*)\,,
	\label{eq:5.23}
	\end{equation}
we can also assert $-\tfrac{\pi}{2} <\omega \leq \pi\Omega(\delta,\omega),$ for all $-\tfrac{\pi}{2} \leq \omega\leq \omega_*.$ Moreover
	\begin{equation}
	\omega \mapsto B(\delta, \omega)<0
	\label{eq:5.24}
	\end{equation}
is then strictly decreasing, for fixed $\delta$ and all $-\tfrac{\pi}{2} \leq \omega\leq \omega_*.$

More specifically, suppose the additional assumption \eqref{eq:5.23} holds at $\Omega(\delta,\omega_*)= \Omega_{m,j_*}^-,\ \omega_* = \omega^-(\Omega_{m,j_*}^-)$, for some $1\leq j_*\leq j_m^{\max},\ m\geq 1$.
Then
	\begin{equation}
	B_{m,j_*}^-<\ldots < B_{m,1}^-\leq 0
	\label{eq:5.24a}
	\end{equation}
holds at $\varepsilon= \varepsilon(\delta,\omega_*)$, for all $1\leq j\leq j_*$.
For the above ranges of $\varepsilon,\delta$, this establishes the Pyragas region $\mathcal{P}$.
\label{lem:5.2}
\end{lem}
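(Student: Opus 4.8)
The plan is to reduce every assertion of the lemma to the $\delta$-expansions of Lemma~\ref{lem:5.1} together with the single inequality chain \eqref{eq:5.21}. The decisive structural observation is that, under the parametrization set up in \eqref{eq:5.7}--\eqref{eq:5.12}, the control values $B_{m,j_m+1}^+$ and $B_{m,j_m}^-$ are literally $B(\delta,\omega)$ evaluated at the frequency $\omega$ selected by the hashing $\varepsilon=\varepsilon^\pm(\delta,\omega)$, while $B_{0,1}^+$ and $B_{1,1}^-$ are read as the Section~\ref{sec:4} expansions \eqref{eq:4.12}, \eqref{eq:4.13} evaluated at the \emph{same} matched $\varepsilon$, i.e. as the functions \eqref{eq:5.16}, \eqref{eq:5.17}. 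Consequently \eqref{eq:5.22} and \eqref{eq:5.22a} follow from \eqref{eq:5.21} applied pointwise in $\omega$; in particular I need no injectivity of $\omega\mapsto\varepsilon^+(\delta,\omega)$, which gracefully absorbs the possible nonuniqueness of the sets $B_{m,j}^+$.

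For the chain \eqref{eq:5.21} I would subtract the expansions \eqref{eq:5.14}, \eqref{eq:5.16}, \eqref{eq:5.17} directly. The leading terms $-c\delta^2$ coincide in all three quantities and cancel, so the sign is decided at order $\delta^3$. A short computation gives
\[
B-B_{0,1}^+ = -c\,(2-s-\tfrac{2}{\pi}\omega)\,\delta^3+\ldots,\qquad
B-B_{1,1}^- = +c\,(2+s+\tfrac{2}{\pi}\omega)\,\delta^3+\ldots,
\]
with $c=\cos\omega$, $s=\sin\omega$. For $|\omega|<\tfrac{\pi}{2}$ we have $c>0$, $|s|<1$ and $|\tfrac{2}{\pi}\omega|<1$, hence both bracketed factors are strictly positive; this yields $B_{1,1}^-<B<B_{0,1}^+$ for small $\delta$. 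I note that both factors, and $c$ itself, degenerate to $0$ at the endpoints $\omega=\pm\tfrac{\pi}{2}$; this is exactly the source of the additional linearization in $\omega$ flagged after \eqref{eq:1.34}, which is why the strict statement \eqref{eq:5.21} is confined to $|\omega|<\tfrac{\pi}{2}$.

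To establish the monotonicity claim \eqref{eq:5.24} I would first propagate the hypothesis \eqref{eq:5.23}. Setting $g(\omega):=\pi\Omega(\delta,\omega)-\omega$, differentiation of \eqref{eq:5.13} gives $g'(\omega)=2s\,\delta-1+O(\delta^2)<0$ for small $\delta$, so $g$ is strictly decreasing; since $g(\omega_*)\geq0$ by \eqref{eq:5.23}, we obtain $g(\omega)\geq0$, i.e. $\omega\leq\pi\Omega(\delta,\omega)$, for all $-\tfrac{\pi}{2}\leq\omega\leq\omega_*$. On this range the sign analysis of Lemma~\ref{lem:2.6}, specifically \eqref{eq:2.64b} together with its derivation \eqref{eq:2.73} showing $d(\Omega,\omega)<0$ whenever $\omega\leq\pi\Omega$, yields $\partial_\omega B<0$, which is the asserted strict decrease.

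Finally, for \eqref{eq:5.24a} I would fix $\varepsilon=\varepsilon(\delta,\omega_*)$ and track the lower-branch Hopf frequencies $\omega_{m,j}^-$ in $j$. Since $\omega=\omega^-(\Omega)$ is the strictly decreasing branch \eqref{eq:3.40a} and the hashing lines shift up by $2\pi$ as $j$ increases, their intersections $\omega_{m,j}^-$ increase strictly with $j$ and all lie in $(-\tfrac{\pi}{2},\omega_*]$ with $\omega_{m,j_*}^-=\omega_*$; this is mirrored in the Section~\ref{sec:4} expansion \eqref{eq:4.10} via the monotonicity of $\alpha_{m,j}^-$ in $j$. Combining the increase of $\omega_{m,j}^-$ with the strict decrease of $\omega\mapsto B(\delta,\omega)$ just established yields $B_{m,j_*}^-<\ldots<B_{m,1}^-$, while $B_{m,1}^-\leq0$ follows from $B\leq0$ throughout the admissible range (Lemma~\ref{lem:2.6}). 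Together with \eqref{eq:5.22}, \eqref{eq:5.22a}, this verifies assumptions \eqref{eq:3.67}, \eqref{eq:3.69} of Corollary~\ref{cor:3.9} in the regime of large $m$, hence the Pyragas region $\mathcal{P}$. I expect the main obstacle to be precisely this monotonicity step: one must ensure that the $B$-monotone window supplied by \eqref{eq:2.64b} reaches all the way up to $\omega_*$ — which is why the hypothesis \eqref{eq:5.23} and its propagation through $g$ are indispensable — while keeping the bookkeeping honest in the degenerate or multiple-intersection situations of the upper branch, and recalling that everything ultimately rests on third-order ($\delta^3$) coefficients after the second-order terms have cancelled.
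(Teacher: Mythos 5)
Your overall route is the paper's: the parametrization identity $B_{m,j_m}^-=B(\delta,\omega)$ at $\varepsilon=\varepsilon^-(\delta,\omega)$ and $B_{m,j_m+1}^+=B(\delta,\omega)$ at $\varepsilon=\varepsilon^+(\delta,\omega)$ reduces \eqref{eq:5.22}, \eqref{eq:5.22a} to \eqref{eq:5.21}, and your handling of \eqref{eq:5.23}--\eqref{eq:5.24a} (propagating $\omega\leq\pi\Omega(\delta,\omega)$ to smaller $\omega$, excluding critical points of $B$ there via lemma~\ref{lem:2.6}, then tracking $\omega_{m,j}^-$ monotonically in $j$) matches the paper's proof. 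The gap is in your proof of the central inequality \eqref{eq:5.21}. Comparing $\delta^3$-coefficients, as you do, only shows that for each \emph{fixed} $\omega$ with $|\omega|<\tfrac{\pi}{2}$ there is a threshold $\delta_0(\omega)$ below which the inequalities hold; since the coefficients $c\,(2-s-\tfrac{2}{\pi}\omega)$ and $c\,(2+s+\tfrac{2}{\pi}\omega)$ vanish at $\omega=+\tfrac{\pi}{2}$ and $\omega=-\tfrac{\pi}{2}$ respectively, while the uniform remainders are merely $O(\delta^4)$ and not visibly proportional to those coefficients, this threshold degenerates, $\delta_0(\omega)\to 0$, as $\omega\to\mp\tfrac{\pi}{2}$. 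But the lemma asserts \eqref{eq:5.21} with a single $\delta_0$ for \emph{all} $|\omega|<\tfrac{\pi}{2}$, and this uniformity is not cosmetic: by \eqref{eq:5.15} the hashing forces $\cos\omega\approx(\tfrac{\pi}{2})^2\varepsilon/\delta^2$ at the Hopf points, so in the regime $\varepsilon\ll\delta^2$ (fixed $m>m_0$, $k$ very large) the relevant frequencies cluster at $\mp\tfrac{\pi}{2}$ --- exactly where your pointwise argument says nothing. Hence \eqref{eq:5.22}, \eqref{eq:5.22a}, and with them assumptions \eqref{eq:3.67}, \eqref{eq:3.69} of corollary~\ref{cor:3.9}, are not yet established for all admissible pairs $(m,k)$.

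You do name the missing ingredient (the ``additional linearization in $\omega$'' flagged after \eqref{eq:1.34}), but you never carry it out, and you misread its role: it is not an explanation of why \eqref{eq:5.21} is stated on the open interval, it is the step that makes the estimate uniform up to the endpoints. The paper closes the gap in \eqref{eq:5.25}--\eqref{eq:5.27}: first, equality holds in \eqref{eq:5.21} at $\omega=\mp\tfrac{\pi}{2}$ (there $\Omega(\delta,\omega)=0$ and $\varepsilon^\pm=0$, and the normalized quantities $B/c$ and $B_{1,1}^-/c$, resp. $B_{0,1}^+/c$, coincide in the limit, so the normalized differences vanish for every $\delta$); second, the $\omega$-derivatives of $(B-B_{1,1}^-)/c$ and $(B_{0,1}^+-B)/c$, obtained by differentiating the coefficients of the uniform $\delta$-expansions, equal $+\tfrac{2}{\pi}\delta^3+\ldots>0$ at $\omega=-\tfrac{\pi}{2}$ and $-\tfrac{2}{\pi}\delta^3+\ldots<0$ at $\omega=+\tfrac{\pi}{2}$, uniformly in $0<\delta\leq\delta_0$. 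Anchoring at the endpoint equalities and integrating these derivatives inward (combined with your interior coefficient positivity) yields strict positivity of both differences on all of $|\omega|<\tfrac{\pi}{2}$, uniformly in $\delta$. Without these two facts --- neither of which appears in your proposal --- the chain \eqref{eq:5.21}, and therefore the lemma's conclusion, is unproved precisely in the regime the lemma was designed to cover.
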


\begin{proof}[\textbf{Proof.}]
Claim \eqref{eq:5.21} may look somewhat paradoxical, at first sight.
To prove \eqref{eq:5.21}, nevertheless, we first divide \eqref{eq:5.21} by $c=\cos \omega >0$ and compare with expansions \eqref{eq:5.14}, \eqref{eq:5.16}, \eqref{eq:5.17} of lemma~\ref{lem:5.1}.
For $|\omega| <\tfrac{\pi}{2}-\eta_0$ bounded away from $\tfrac{\pi}{2}$, claim \eqref{eq:5.21} becomes equivalent to the obvious inequalities
	\begin{equation}
	\begin{aligned}
	\tfrac{\omega}{\pi/2} +2-s \ &> \ -2s &>&\  \tfrac{\omega}{\pi/2}-2-s\,,
	\quad \text{i.e.}\\
	\tfrac{\omega}{\pi/2}+2 \ &> \ -s &>&\  \tfrac{\omega}{\pi/2}-2\,.
	\end{aligned}
	\label{eq:5.25}
	\end{equation}
Equality holds for $\omega =-\tfrac{\pi}{2}$, on the left, and for $\omega =+\tfrac{\pi}{2}$, on the right.
Therefore $\eta_0=0$ does not seem an option for proving inequality \eqref{eq:5.21}, uniformly for small $\delta$, at first.
However, we obtain uniform expansions for the partial derivatives $\partial_\omega B,\ \partial_\omega B_{0,1}^+,\ \partial_\omega B_{1,1}^-$ as well, by differentiation of the coefficients of the $\delta$-expansions \eqref{eq:5.14}, \eqref{eq:5.16}, \eqref{eq:5.17}.
At $\omega =-\tfrac{\pi}{2}$ we obtain
	\begin{align}
	\partial_\omega((B-B_{1,1}^-)/c) &=
	\partial_\omega (s+\tfrac{\omega}{\pi/2}+2)\delta^3 +\ldots =
	\tfrac{1}{\pi/2}\delta^3+\ldots >0\,,
	\label{eq:5.26}\\
	\intertext{and at $\omega = +\tfrac{\pi}{2}$, similarly,}
	\partial_\omega ((B_{0,1}^+-B)/c) &=
	\partial_\omega(-\tfrac{\omega}{\pi/2}+2-s)\delta^3+\ldots =
	-\tfrac{1}{\pi/2}\delta^3+\ldots <0\,.
	\label{eq:5.27}
	\end{align}
This establishes positivity of the differences for all $|\omega|<\tfrac{\pi}{2}$, uniformly for all $0<\delta \leq \delta_0$, and settles claim \eqref{eq:5.21}.

To prove claim \eqref{eq:5.22} we invoke $B(\delta, \omega) < B_{0,1}^+(\delta, \omega)$ from \eqref{eq:5.21}.
In fact
	\begin{equation}
	B_{m,j_m}^- = B(\delta, \omega)\,,\qquad
	B_{m,j_m+1}^+= B(\delta,\omega)\,,
	\label{eq:5.28}
	\end{equation}
in our parametrization, for any $m\geq 1.$ Note how \eqref{eq:5.28} refers to possibly different $\varepsilon =  \varepsilon^\pm(\delta,\omega)$ given by \eqref{eq:5.9}, \eqref{eq:5.10} and \eqref{eq:5.11}, \eqref{eq:5.12}, respectively.
Indeed $\varepsilon$ has been eliminated in the 2-scale characteristic equations \eqref{eq:5.7}, \eqref{eq:5.8}, and only enters after the $(m,j)$-dependent hashings \eqref{eq:5.9}, \eqref{eq:5.11}.
Thus \eqref{eq:5.22} holds by definition of $\varepsilon^+(\delta, \omega)$.
The arguments for claim \eqref{eq:5.22a} via $\varepsilon^-(\delta,\omega)$ are completely analogous.

To complete the proof we only have to establish the strict monotonicity of $\omega \mapsto B(\delta, \omega)$ and of $j \mapsto B_{m,j}^-,\ j=1, \ldots ,j_*$ as claimed in \eqref{eq:5.24}, \eqref{eq:5.24a}.
We invoke lemmata~\ref{lem:2.5} and \ref{lem:2.6}.
By lemma~\ref{lem:2.5} we have strictly decreasing dependence $\Omega \mapsto \omega^-(\delta, \Omega)$, for fixed $\delta >0$.
By lemma~\ref{lem:2.6}, assumption \eqref{eq:5.23} implies that $\omega\mapsto B(\delta,\omega)$ decreases strictly, as long as $\omega \leq \pi\Omega(\delta,\omega)$.
By monotonicity of $\Omega$, we remain in this region, for all smaller $\omega \geq -\tfrac{\pi}{2}$, once we are ever inside.
This proves claim \eqref{eq:5.24}.

Specifically, suppose assumption \eqref{eq:5.23} holds for $\omega_* = \omega^-(\delta, \Omega_{m,j_*}^-)$, and hence for all $-\tfrac{\pi}{2}\leq \omega\leq \omega_*=\omega^-(\delta,\Omega_{m,j_*}^-).$
Then conclusion \eqref{eq:5.24} holds for all $-\tfrac{\pi}{2}\leq \omega\leq \omega_*=\omega^-(\delta,\Omega_{m,j_*}^-).$
Hashing \eqref{eq:5.9} implies strict monotonicity of the slow frequencies,
	\begin{equation}
	\Omega_{m,j_*}^-< \ldots < \Omega_{m,1}^-\leq 0\,.
	\label{eq:5.29}
	\end{equation}
In particular \eqref{eq:5.29} successively implies
	\begin{equation}
	\begin{aligned}
	\omega_*\quad &\ = \ \omega^-(\Omega_{m,j_*}^-) &> \ldots &>
	\ \omega^-(\Omega_{m,1}^-) &\geq &-\tfrac{\pi}{2}\,,\\
	B(\delta,\omega_*) &\ = \ \quad B_{m,j_*}^- &< \ldots &<
	\quad B_{m,1}^- &\leq & \quad 0\,,
	\end{aligned}
	\label{eq:5.30}
	\end{equation}
at $\varepsilon= \varepsilon(\delta,\omega_*).$ 
This proves claim \eqref{eq:5.24a}.

For the above ranges of $\varepsilon,\delta$, the inequalities \eqref{eq:5.22} and \eqref{eq:5.22a}, \eqref{eq:5.24a} also validate assumptions \eqref{eq:3.67}, \eqref{eq:3.69} of corollary~\ref{cor:3.9}, respectively.
This establishes the Pyragas region $\mathcal{P}$, in the above ranges of $\varepsilon,\delta$, and the lemma is proved.
\end{proof}

We now address the remaining segment of the piecewise strictly monotone curve $\omega \mapsto \Omega(\delta, \omega)$, where
	\begin{equation}
	\pi \Omega (\delta, \omega) \leq \omega\leq 
	\tfrac{\pi}{2}\Omega (\delta, \omega)<0\,.
	\label{eq:5.31}
	\end{equation}
By lemma~\ref{lem:2.6}, this segment contains
	\begin{equation}
	B_{\min}(\delta) = \min\limits_{|\omega |\leq \tfrac{\pi}{2}} B(\delta, \omega)\,.
	\label{eq:5.32}
	\end{equation}

\begin{prop}
Let $k$ be odd.
Uniformly, for $\omega$ satisfying \eqref{eq:5.31}, we have the expansions
	\begin{align}
	\Omega (\delta, \omega) &= 
	-\tfrac{1}{\pi/2} \delta +\mathcal{O}(\delta^3)\,;
	\label{eq:5.33}\\
	B(\delta, \omega) &=
	-\delta^2 +\mathcal{O}(\delta^4)\,.
	\label{eq:5.34}\\
	\intertext{In particular this implies}
	B_{\min}(\delta) &= 
	-\delta^2+\mathcal{O}(\delta^4)\,.
	\label{eq:5.35}
	\end{align}
\label{prop:5.3}
\end{prop}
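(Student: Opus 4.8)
The plan is to bootstrap the general $\delta$-expansions of Lemma~\ref{lem:5.1}, whose coefficients still depend on $\omega$ through $c=\cos\omega$ and $s=\sin\omega$, by exploiting that the special region \eqref{eq:5.31} pins the fast frequency $\omega$ itself to order $\delta$. First I would note that Lemma~\ref{lem:5.1} already gives $\Omega(\delta,\omega)=-\tfrac{2}{\pi}c\,\delta+\mathcal{O}(\delta^2)$ uniformly in $|\omega|\le\tfrac{\pi}{2}$, whence $|\Omega(\delta,\omega)|\le C\delta$ for all small $\delta$. The defining constraint \eqref{eq:5.31}, namely $\pi\Omega\le\omega\le\tfrac{\pi}{2}\Omega<0$, then forces $|\omega|\le\pi|\Omega|\le\pi C\delta$, so that $\omega=\mathcal{O}(\delta)$ holds uniformly throughout the segment. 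Consequently $c=\cos\omega=1+\mathcal{O}(\delta^2)$ and $s=\sin\omega=\mathcal{O}(\delta)$, which is the single observation that upgrades the $\omega$-dependent expansions to the clean leading-order forms claimed here.

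With these two estimates the remaining steps are purely substitutional. Inserting $c=1+\mathcal{O}(\delta^2)$ and $s=\mathcal{O}(\delta)$ into $\Omega=-\tfrac{2}{\pi}c(\delta-s\delta^2+\ldots)$ of \eqref{eq:5.13} demotes the would-be $\delta^2$-term to $\mathcal{O}(\delta^3)$, since it carries a factor $cs=\mathcal{O}(\delta)$, and leaves $\Omega=-\tfrac{2}{\pi}\delta+\mathcal{O}(\delta^3)$, i.e.\ \eqref{eq:5.33}. The same substitution into $B=-c(\delta^2-2s\delta^3+\ldots)$ of \eqref{eq:5.14} turns $-c\delta^2$ into $-\delta^2+\mathcal{O}(\delta^4)$ and sends the correction $2cs\,\delta^3$ to $\mathcal{O}(\delta^4)$, yielding \eqref{eq:5.34}. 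Here I would rely on the omitted coefficients of Lemma~\ref{lem:5.1} being bounded analytic functions of $\omega$, so that restricting to the sub-region $|\omega|\le C\delta$ preserves the uniformity of every remainder.

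Finally, for \eqref{eq:5.35} I would invoke Lemma~\ref{lem:2.6}: its conclusion \eqref{eq:2.64b} places the minimizer $\omega_*$ of $\omega\mapsto B(\delta,\omega)$ strictly inside the region \eqref{eq:5.31}, since there $\pi\Omega_*<\omega_*<\tfrac{\pi}{2}\Omega_*<0$, while $\partial_\omega B<0$ for $\omega\le\pi\Omega_*$ excludes any minimum to the left. Evaluating the already established expansion \eqref{eq:5.34} at $\omega=\omega_*$ then gives $B_{\min}(\delta)=B(\delta,\omega_*)=-\delta^2+\mathcal{O}(\delta^4)$, as claimed in \eqref{eq:5.32}, \eqref{eq:5.35}.

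I expect the only genuinely delicate point to be the uniformity of the bootstrap, i.e.\ checking that the constant $C$ in $|\omega|\le C\delta$ is independent of $\delta$ and that the higher-order remainders stay $\mathcal{O}(\delta^3)$ and $\mathcal{O}(\delta^4)$ once their $\omega$-dependent coefficients are re-expanded at $\omega=\mathcal{O}(\delta)$. This rests on the mildly self-referential definition of \eqref{eq:5.31} through $\Omega(\delta,\omega)$, which is harmless precisely because Lemma~\ref{lem:5.1} controls $\Omega$ a priori for \emph{every} $\omega$; the underlying algebra is then routine.
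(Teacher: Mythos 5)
Your proposal is correct and follows essentially the same route as the paper: the paper likewise combines the constraint \eqref{eq:5.31} with the uniform expansion \eqref{eq:5.13} to conclude $\omega=\mathcal{O}(\delta)$, feeds $s=\sin\omega=\mathcal{O}(\delta)$, $c=\cos\omega=1+\mathcal{O}(\delta^2)$ back into \eqref{eq:5.13}, \eqref{eq:5.14}, and obtains \eqref{eq:5.35} from the fact (lemma~\ref{lem:2.6}, stated in the remark preceding the proposition) that the minimizer of $B(\delta,\cdot)$ lies in the segment \eqref{eq:5.31}. Your explicit attention to the uniformity of the bootstrap and the non-circularity of the region's definition only makes explicit what the paper leaves implicit.
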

\begin{proof}[\textbf{Proof.}]
We invoke expansions \eqref{eq:5.13} and \eqref{eq:5.14} of lemma~\ref{lem:5.1}.
Indeed \eqref{eq:5.13} and \eqref{eq:5.31} imply
	\begin{equation}
	-2c\delta + \ldots = \pi\Omega (\delta, \omega) \leq\omega\leq
	\tfrac{\pi}{2}\Omega(\delta, \omega)= -c\delta +\ldots
	\label{eq:5.36}
	\end{equation}
with $c=\cos \omega$.
In particular $\omega = \mathcal{O}(\delta)$, uniformly in the region \eqref{eq:5.31}.
Hence the $\delta$-expansion \eqref{eq:5.13} with $s= \sin \omega$ implies claim \eqref{eq:5.33}.
Similarly, \eqref{eq:5.14} and $\omega =\mathcal{O}(\delta)$ imply \eqref{eq:5.34}.
Claim \eqref{eq:5.35} follows from the uniform expansion \eqref{eq:5.34} by the remark preceding the proposition, and the proof is complete.
\end{proof}

\begin{lem}
Let $0<\delta \leq \delta_0$ be small enough and consider odd $k$.
Let $(\Omega_*,\omega_*)$ denote the unique intersection point of the line $\omega=\pi\Omega$ with the 2-scale curve $\Omega = \Omega(\delta, \omega)$.
Assume that the hashing line
	\begin{equation}
	\Omega = \varepsilon(\omega -\pi a_{m,j_m})\,,
	\label{eq:5.37}
	\end{equation}
of $j_m$:= $[(m+1)/2]$ intersects the line $\pi \Omega = \omega,\ \omega \leq 0$, to the left of $(\Omega_*, \omega_*)$.
Then
	\begin{equation}
	B_{1,1}^-(\varepsilon) < B_{\min} (\delta) <0\,.
	\label{eq:5.38}
	\end{equation}		
\label{lem:5.4}
\end{lem}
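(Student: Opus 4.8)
The plan is to reduce the two-sided claim to a single, delicate order-$\delta^3$ comparison. The right inequality $B_{\min}(\delta) < 0$ is immediate, since $B(\delta,\omega) < 0$ throughout by lemma~\ref{lem:2.6}, so only the left inequality $B_{1,1}^-(\varepsilon) < B_{\min}(\delta)$ requires work. I would begin from the expansion $B_{\min}(\delta) = -\delta^2 + \mathcal{O}(\delta^4)$ of proposition~\ref{prop:5.3}, \eqref{eq:5.35}, noting the crucial fact that its $\delta^3$ coefficient \emph{vanishes}. The whole argument then turns on evaluating $B_{1,1}^-$ at the value of $\varepsilon$ forced by the hypothesis, where the leading $-\delta^2$ terms cancel and the sign is decided only at order $\delta^3$.

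First I would translate the geometric hypothesis into a quantitative bound on $\varepsilon$. Let $\varepsilon_*(\delta)$ denote the unique value of $\varepsilon$ for which the hashing line \eqref{eq:5.37} passes exactly through $(\Omega_*,\omega_*)$; solving $\Omega_* = \varepsilon_*(\omega_* - \pi a_{m,j_m})$ with $\omega_* = \pi\Omega_*$ yields $\varepsilon_* = \Omega_*/\bigl(\pi(\Omega_* - a_{m,j_m})\bigr)$. Differentiating the crossing abscissa $\Omega_{\mathrm{cross}}(\varepsilon) = -\pi a_{m,j_m}\,\varepsilon/(1-\pi\varepsilon)$ gives $\Omega_{\mathrm{cross}}'(\varepsilon) = -\pi a_{m,j_m}/(1-\pi\varepsilon)^2 < 0$, so $\Omega_{\mathrm{cross}}$ is strictly decreasing in $\varepsilon$. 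Hence the hypothesis that the hashing line crosses $\pi\Omega = \omega$ to the left of $(\Omega_*,\omega_*)$ is equivalent to $\varepsilon > \varepsilon_*(\delta)$. Using $a_{m,j_m} = \tfrac12\delta^{-1} - 1$ from \eqref{eq:5.6}, together with $\Omega_* = -\tfrac{2}{\pi}\delta + \mathcal{O}(\delta^3)$ (obtained by inserting the self-consistent $\omega_* = \pi\Omega_* = \mathcal{O}(\delta)$ into \eqref{eq:5.13}), I would then expand to find $\varepsilon_*(\delta) = \tfrac{4}{\pi^2}\delta^2 + \tfrac{8}{\pi^2}\delta^3 + \mathcal{O}(\delta^4)$. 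The $\delta^3$ contribution here, produced jointly by the $-1$ in $a_{m,j_m}$ and by the denominator $1-\pi\varepsilon$, is exactly what drives the final sign.

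Next I would feed $\varepsilon_*$ into the already-established $\varepsilon$-expansion $B_{1,1}^-(\varepsilon) = -(\tfrac{\pi}{2})^2\varepsilon + (\tfrac{\pi}{2})^3\varepsilon^2 + \ldots$ of \eqref{eq:4.12}. Since $\varepsilon_* = \mathcal{O}(\delta^2)$, the quadratic term contributes only at order $\delta^4$, and the linear term gives $B_{1,1}^-(\varepsilon_*) = -\tfrac{\pi^2}{4}\varepsilon_* + \mathcal{O}(\delta^4) = -\delta^2 - 2\delta^3 + \mathcal{O}(\delta^4)$. Comparing with $B_{\min}(\delta) = -\delta^2 + \mathcal{O}(\delta^4)$, the leading terms cancel and I obtain $B_{1,1}^-(\varepsilon_*) - B_{\min}(\delta) = -2\delta^3 + \mathcal{O}(\delta^4) < 0$ for all sufficiently small $\delta$. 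Finally, since $B_{1,1}^-(\varepsilon)$ is strictly decreasing in $\varepsilon$ for small $\varepsilon$ (its derivative is $-\tfrac{\pi^2}{4} + \mathcal{O}(\varepsilon)$), the hypothesis $\varepsilon > \varepsilon_*(\delta)$ gives the chain $B_{1,1}^-(\varepsilon) < B_{1,1}^-(\varepsilon_*) < B_{\min}(\delta) < 0$, which is the assertion.

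The hard part is precisely the order-$\delta^3$ bookkeeping: because $B_{1,1}^-(\varepsilon_*)$ and $B_{\min}(\delta)$ already agree to order $\delta^2$, I must carry the expansions of $\Omega_*$, of $\varepsilon_*(\delta)$, and of $B_{1,1}^-$ consistently through order $\delta^3$, and—most critically—confirm that $B_{\min}(\delta)$ carries \emph{no} $\delta^3$ term. The latter is what makes the comparison come out strictly negative, and it follows because $\omega = \mathcal{O}(\delta)$ on the segment \eqref{eq:5.31} kills the $s\,\delta^3$ correction in \eqref{eq:5.14}. A secondary point of care is that the margin $-2\delta^3$ already arises at the threshold $\varepsilon_*$ itself, so that the strictness of the conclusion does not rely delicately on how far to the left the hashing line crosses; the monotonicity of $B_{1,1}^-$ then propagates this margin to all admissible $\varepsilon > \varepsilon_*(\delta)$.
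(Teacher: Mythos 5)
Your proposal is correct and follows essentially the same route as the paper's proof: both translate the left-of-$(\Omega_*,\omega_*)$ hypothesis into the lower bound $\varepsilon \geq (\tfrac{\pi}{2})^{-2}(\delta^2+2\delta^3+\mathcal{O}(\delta^4))$ using $a_{m,j_m}=\tfrac{1}{2}\delta^{-1}-1$ and the expansion of $\Omega_*$, then feed this into the expansion \eqref{eq:4.12} of $B_{1,1}^-$ and compare with \eqref{eq:5.35} to obtain the decisive $-2\delta^3$ margin. The only difference is presentational: the paper folds your explicit threshold $\varepsilon_*(\delta)$ and the monotonicity of $B_{1,1}^-$ in $\varepsilon$ into a single chain of inequalities, \eqref{eq:5.39}--\eqref{eq:5.40}.
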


\begin{proof}[\textbf{Proof.}]
We recall $a_{m,j_m}=\tfrac{1}{2} \delta^{-1} -1$; see \eqref{eq:5.6}.
Insertion into hashing \eqref{eq:5.37} provides the intersection with $\pi \Omega = \omega$ at 
	\begin{equation}
	\begin{aligned}
	\varepsilon &=
	\Omega/(\omega-\pi a_{m,j_m})= \tfrac{1}{\pi}\Omega/
	(\Omega-\tfrac{1}{2} \delta^{-1}+1)=\\
	&=
	-\tfrac{1}{\pi/2}\delta\Omega/(1-2\delta-2\delta\Omega)\geq
	-\tfrac{1}{\pi/2}\delta\Omega(\delta,\omega_*)/
	(1-2\delta-2\delta\Omega(\delta, \omega_*))=\\
	&=
	(\tfrac{\pi}{2})^{-2}\delta^2/(1-2\delta)+\mathcal{O}(\delta^4)=
	(\tfrac{\pi}{2})^{-2}(\delta^2+2\delta^3+\mathcal{O}(\delta^4))\,.
	\end{aligned}
	\label{eq:5.39}
	\end{equation}
Here we have used intersection to the left of $(\Omega_*, \omega_*)$, i.e. $\Omega \leq \Omega_* = \Omega(\delta,\omega_*)< 0$, and expansion \eqref{eq:5.33} at $(\Omega_*, \omega_*)$.
The $\varepsilon$-expansion \eqref{eq:4.12} of corollary~\ref{cor:4.2} for $B_{1,1}^-= B_{1,1}^-(\varepsilon)$, the estimate $\varepsilon = \varepsilon^- = \mathcal{O}(\delta^2)$ of \eqref{eq:5.15}, and comparison with \eqref{eq:5.35} via \eqref{eq:5.39} then yield
	\begin{equation}
	\begin{aligned}
	B_{1,1}^- (\varepsilon) 
	&=	-(\tfrac{\pi}{2})^2\varepsilon+
	(\tfrac{\pi}{2})^3 \varepsilon^2 &+&\ \mathcal{O}(\varepsilon^3)  &\leq&\\
	&\leq \quad -\,\delta^2 -2\delta^3 &+&\ \mathcal{O}(\delta^4) &<& \ B_{\min}(\delta)<0\,.
	\end{aligned}
	\label{eq:5.40}
	\end{equation}
This proves claim \eqref{eq:5.38} and the lemma.
\end{proof}

%%%%%%%%%%%%%%%%%%%%%%%%%%%%%%%%%%%%%%%%%%%%%%%%%%%%%%%%%%%

\section{Proof of theorem~1.2}
\label{sec:6}

Our proof of theorem \ref{thm:1.2} is based on just a detailed stability analysis of the 2-delay characteristic equation  \eqref{eq:1.2ch}, with $\vartheta=0$, at the Hopf bifurcation points $\lambda = \lambda_k := (-1)^{k+1} \omega_k\,$ of \eqref{eq:1.3}.
Emphasis is on control-induced eigenvalues $\mu = i \tilde{\omega}$ in the limit $\varepsilon \rightarrow 0$ of large frequencies $1/\varepsilon = \omega_k = (k+\tfrac{1}{2})\pi$. 

We summarize the proof; see figs.~\ref{fig:3.1} and \ref{fig:3.2} for an illustration.
We only address the case of large odd $k$; the case of even $k$ is analogous.

In section~\ref{sec:2} we have studied the 2-scale characteristic equation \eqref{eq:2.8}, \eqref{eq:2.9} which eliminates $\varepsilon> 0$.
Instead, imaginary eigenvalues $\mu = \pm i\tilde{\omega}$ have been represented by a slow Hopf frequency $\tilde{\Omega} = \varepsilon\tilde{\omega}$, in addition to $\tilde{\omega}$ itself.
The case of real eigenvalues $\mu$, and their crossing at $\mu=0$ due to the scaled control parameter $b=2\varepsilon B$, was treated in lemma~\ref{lem:2.1} and corollary~\ref{cor:2.2}.
The hashing $\tilde{\Omega}= \varepsilon\tilde{\omega}$ was detailed, and normalized to $\Omega=\tilde{\Omega}-(2m+1)$, $\omega\equiv\tilde{\omega}\ ($mod $2 \pi),$ in lemma~\ref{lem:2.3}.
In lemma~\ref{lem:2.4} we rewrote the characteristic equation in normalized frequencies $\Omega, \omega$ instead of $\tilde{\Omega}, \tilde{\omega}$.
Lemma~\ref{lem:2.5} studied the resulting fundamental nonlinear 2-scale relation between $\Omega$ and $\omega$.
The resulting control parameters $B$ were addressed in lemma~\ref{lem:2.6}.
Emphasis there was on monotonicity properties with respect to $\omega$.

Section~\ref{sec:3} has been devoted to the resulting nontrivial control-induced Hopf bifurcations, at $B=B_{m,j}^\pm$, in contrast to the spectrum inherited from the uncontrolled case.
Here $m$ indicates the proximity of an $m:1$ resonance of $\tilde{\omega}$ with $\omega_k$.
The detailed analysis included simplicity of Hopf eigenvalues, and their transverse crossing directions; see theorem~\ref{thm:3.4}.
The analysis culminated in corollaries~\ref{cor:3.5} and \ref{cor:3.9}.
In corollary~\ref{cor:3.5} we established the absence of any Pyragas control, for control parameters $B>0$.
The central corollary~\ref{cor:3.9} established the control region
	\begin{equation}
	B\in\mathcal{P}=(B_{0,1}^+,\ B_{1,1}^-)
	\label{eq:6.1}
	\end{equation}
as the only Pyragas region, under assumptions \eqref{eq:3.67} -- \eqref{eq:3.69} of certain inequalities among the control-induced Hopf parameter values $B=B_{m,j}^\pm$.

Section~\ref{sec:4} collected $\varepsilon$-expansions for $B_{m,j}^\pm$, and auxiliary quantities like the fast and slow normalized Hopf frequencies $\omega_{m,j}^\pm$ and $\Omega_{m,j}^\pm$, see proposition~\ref{prop:4.1}.
These Taylor expansions in $\varepsilon$ amount to expansions in the limit of arbitrarily large unstable dimensions $k$, and arbitrarily rapid oscillation frequencies $\omega_k=(k+\tfrac{1}{2})\pi$, of the original Hopf bifurcations of \eqref{eq:1.3} in absence of any control.
Corollary~\ref{cor:4.3} then established the crucial Hopf inequalities \eqref{eq:3.67} -- \eqref{eq:3.69} for small enough $0<\varepsilon \leq \varepsilon_0(\delta_0)$, uniformly for bounded indices $m\leq m_0$ and $j \leq j_{m}+1$ in the Hopf series $B_{m,j}^+$, as well as for $B_{m,j}^-$ with $j\leq j_m$.
Here $j_m:=[(m+1)/2].$
In particular this settled assumption \eqref{eq:3.68}.
At the end of this section, we observed how our proof hinged on a miraculous gap property of instability intervals $B \in I_{m,j}=(B_{m,j}^-,\ B_{m,j}^+)$: the first gap occurs at $j=j_{m}$ and contains the Pyragas region \eqref{eq:6.1}.
See fig. \ref{fig:4.1}.

To complete the proof, the limit of large near-resonances $m>m_0$, alias small $\delta = 1/\Omega_m=1/(2m+1)$, was addressed in section~\ref{sec:5}.
For odd $k$, lemma~\ref{lem:5.1} collected $\delta$-expansions for $\Omega, B, \varepsilon^\pm\,$, and $B_{0,1}^+,\ B_{1,1}^-$ in terms of the (normalized) fast Hopf frequencies $\omega$.
The Taylor expansions with respect to $\delta$ address the limit of $m:1$ resonances, for $m \rightarrow \infty\,$.
In lemma~\ref{lem:5.2} this settled the remaining two assumptions \eqref{eq:3.67} and \eqref{eq:3.69} of corollary~\ref{cor:3.9}, up to one exceptional case.
The exceptional case was caused by our additional assumption, thus far, that all relevant Hopf points $B_{m,j}^-, \ j=1, \ldots , j_m$ in \eqref{eq:3.69} occur in the (normalized) frequency range
	\begin{equation}
	-\tfrac{\pi}{2}\leq \omega \leq \pi\Omega <0\,,
	\label{eq:6.2}
	\end{equation}
i.e. at, or to the right of, the unique intersection $(\Omega_*, \omega_*)$ of the 2-scale frequency relation $\Omega = \Omega(\delta,\omega)$ with the straight line $\omega = \pi\Omega$ in the $(\Omega, \omega)$-plane.
See \eqref{eq:5.7}, \eqref{eq:2.27}, \eqref{eq:2.36}.
Since Hopf points themselves originate from intersections of straight hashing lines with that 2-scale frequency relation, the only remaining case was that the hashing line at $j=j_m=[(m+1)/2]$ intersects $\omega = \pi\Omega$ to the left of $(\Omega_*, \omega_*)$.
This case was addressed, and settled, in lemma~\ref{lem:5.4}.
Indeed the minimal possible value $B_{\min}$ of control parameters induced by the 2-scale frequency relation then satisfies
	\begin{equation}
	B_{1,1}^-(\varepsilon)< B_{\min}(\delta)<0\,.
	\label{eq:6.3}
	\end{equation}
Since $B_{\min} \leq B_{m,j}^-$ for all $j$, this also established the remaining assumption \eqref{eq:3.69} in this one remaining configuration.
This completes the proof of our main theorem~\ref{thm:1.2}.

%%%%%%%%%%%%%%%%%%%%%%%%%%%%%%%%%%%%%%%%%%%%%%%%%%%%%%%%%%%

{\small

}

\end{document}